\numberwithin{theorem}{section}
\newcommand{\TheTitle}{Extragradient method with variance reduction for stochastic variational inequalities} 
\newcommand{\TheAuthors}{A. Iusem, A. Jofr\'e, R. I. Oliveira, and P. Thompson}
\title{{\TheTitle}\thanks{Submitted to the editors DATE.
}}
\author{
Alfredo N. Iusem\thanks{Instituto de Matem\'atica Pura e Aplicada (IMPA), 
Rio de Janeiro, RJ, Brazil.
(\email{iusp@impa.br}).}
\and
Alejandro Jofr\'e\thanks{Centro de Modelamiento Matem\'atico (CMM \& DIM), 
Santiago, Chile.
(\email{ajofre@dim.uchile.cl}).}
\and
Roberto I. Oliveira\thanks{Instituto de Matem\'atica Pura e Aplicada (IMPA), 
Rio de Janeiro, RJ, Brazil.
(\email{rimfo@impa.br}). Roberto I. Oliveira's work was supported by a {\em Bolsa de Produtividade em Pesquisa} from CNPq, Brazil. His work in this article is part of the activities of FAPESP Center for Neuromathematics (grant \#2013/07699-0, FAPESP - S. Paulo Research Foundation).}
\and
Philip Thompson\thanks{Instituto de Matem\'atica Pura e Aplicada (IMPA), 
Rio de Janeiro, RJ, Brazil.
(\email{philip@impa.br}). Philip Thompson's work was supported by a CNPq Doctoral scholarship while he was a PhD student at IMPA with visit appointments at CMM. His work was conducted at IMPA and CMM.}
}
\newcommand{\esp}{\mathbb{E}}
\newcommand{\prob}{\mathbb{P}}
\newcommand{\var}{\mathbb{V}}
\newcommand{\alg}{\mathcal{F}}
\newcommand{\alge}{\mathcal{G}}
\newcommand{\re}{\mathbb{R}}
\newcommand{\qc}{\mathfrak{q}}
\DeclareMathOperator*{\covar}{cov}
\newcommand{\vertiii}[1]{{\left\vert\kern-0.4ex #1 
\kern-0.4ex\right\vert}}
\newcommand{\Lpnorm}[1]{\vertiii{\,#1\,}_{p}}
\newcommand{\Lqnorm}[1]{\vertiii{\,#1\,}_{q}}
\newcommand{\Lqcnorm}[1]{\vertiii{\,#1\,}_{\mathfrak{q}}}	
\DeclareMathOperator*{\dist}{d}
\DeclareMathOperator*{\diam}{diam}
\newcommand{\argmin}{\mbox{argmin}}
\newtheorem{lema}{Lemma}
\newtheorem{thm}{Theorem}
\newtheorem{prop}{Proposition}
\newtheorem{assump}{\textbf{Assumption}}
\newtheorem{algo}{Algorithm}
\newtheorem{rem}{Remark}
\newtheorem{example}{Example}
\begin{document}

\maketitle

\begin{abstract}
We propose an extragradient method with stepsizes bounded away from zero for stochastic variational inequalities requiring only pseudo-monotonicity. We provide convergence and complexity analysis, allowing for an unbounded feasible set, unbounded operator, non-uniform variance of the oracle and, also, we do not require any regularization. Alongside the stochastic approximation procedure, we iteratively reduce the variance of the stochastic error. Our method attains the optimal oracle complexity $\mathcal{O}(1/\epsilon^2)$ (up to a logarithmic term) and a faster rate $\mathcal{O}(1/K)$ in terms of the mean (quadratic) natural residual and the D-gap function, where $K$ is the number of iterations required for a given tolerance $\epsilon>0$. Such convergence rate represents an acceleration with respect to the stochastic error. The generated sequence also enjoys a new feature: the sequence is bounded in $L^p$ if the stochastic error has finite $p$-moment. Explicit estimates for the convergence rate, the oracle complexity and the $p$-moments are given depending on problem parameters and distance of the initial iterate to the solution set. Moreover, sharper constants are possible if the variance is uniform over the solution set or the feasible set. Our results provide new classes of stochastic variational inequalities for which a convergence rate of $\mathcal{O}(1/K)$ holds in terms of the mean-squared distance to the solution set. Our analysis includes the distributed solution of pseudo-monotone Cartesian variational inequalities under partial coordination of parameters between users of a network.  
\end{abstract}

\begin{keywords}
Stochastic variational inequalities, pseudo-monotonicity, 
extragradient method, stochastic approximation, variance reduction
\end{keywords}

\begin{AMS}
65K15, 90C33, 90C15, 62L20
\end{AMS}

\section{Introduction}\label{s1}
The standard (deterministic) variational inequality problem, which we will denote as VI($T,X)$ or simply VI, is defined as follows: 
given a closed and convex set $X\subset\re^n$ and a single-valued operator $T:\mathbb{R}^n\rightarrow\mathbb{R}^n$, 
find $x^*\in X$ such that for all $x\in X$,
\begin{equation}\label{VI}
\langle T(x^*),x-x^*\rangle\ge0.
\end{equation}
We shall denote by $X^*$ the solution set of VI$(T,X)$. The variational inequality problem includes many interesting special 
classes of variational problems 
with applications in economics, game theory and engineering. The basic prototype is smooth convex optimization, 
when $T$ is the gradient of a smooth function.  
Other problems which can be formulated as variational inequalities, include \emph{complementarity 
problems} (when $X=\re^n_+$), \emph{systems of equations} (when $X=\re^n$), \emph{saddle-point problems} and 
many \emph{equilibrium problems}. We refer the reader to Chapter 1 of \cite{facchinei} and \cite{ferris:pang} for an extensive review 
of applications of the VI problem in engineering and economics. The complementarity problem and system of equations are important classes of problems where the feasible set is unbounded.

In the stochastic case, we start with a measurable space $(\Xi,\alge)$, a measurable (random) operator 
$F:\Xi\times\re^n\to\re^n$ and a random variable $\xi:\Omega\rightarrow\Xi$ defined on a probability space 
$(\Omega,\alg,\prob)$ which induces an expectation $\esp$ and a distribution $\prob_\xi$ of $\xi$. 
When no confusion arises, we sometimes use $\xi$ to also denote a random sample  $\xi\in\Xi$. We assume 
that for every $x\in\re^n$, $F(\xi,x):\Omega\rightarrow\re^n$ is an integrable random vector.
The solution criterion analysed in this paper consists
of solving VI($T,X$) as defined by \eqref{VI}, where $T:\re^n\to\re^n$ is the expected value of $F(\xi,\cdot)$, i.e., 
\begin{equation}\label{expected}
T(x)=\esp[F(\xi,x)]=\int_\Omega F(\xi(\omega),x)\dist\prob(\omega),\quad\forall x\in\re^n.
\end{equation} 
For clarity, we state such formulation of the \emph{stochastic variational inequality} problem (SVI) in the following definition:

\quad

\begin{definition}[SVI]\label{SVI.def}
	Assuming that $T:\re^n\rightarrow\re^n$ is given by $T(x)=\esp[F(\xi,x)]$ for all $x\in\re^n$, the \emph{SVI} problem consists of
finding $x^*\in X$, such that $\langle T(x^*),x-x^*\rangle\ge0$ for all $x\in X$.
\end{definition}	

\quad

Such formulation of SVI is often called \emph{expected value} (EV) formulation. The introduction of this formulation goes back to \cite{king:rockafellar,robinson2}, as a natural generalization of stochastic optimization problems (SP). We remark here that different formulations for the stochastic variational inequality problem exist in which the randomness is treated differently. For instance, in the so called \emph{expected residual minimization} (ERM) formulation, one defines a suitable non-negative function $x\mapsto h(\xi,x)$ whose zeros are solutions of VI$(F(\xi,\cdot),X)$. The ERM formulation is then defined as the problem $\min_{x\in X}\esp[h(\xi,x)]$. Variants of the ERM formulation can include random constraints as well. Both EV and ERM formulations have relevance in modelling stochastic equilibrium problems in different settings. See e.g. \cite{ravat:shanbhag, chen&wets}.

Methods for the deterministic VI($T,X$) have been extensively studied (see \cite{facchinei}). If $T$ is fully available 
then SVI can be solved by these methods. As in the case of SP, the SVI in Definition \ref{SVI.def} becomes 
very different from the deterministic setting when $T$ is \emph{not available}. This is often the case in practice due to expensive computation of the expectation in \eqref{expected}, unavailability of $\prob_\xi$ or no closed form for $F(\xi,\cdot)$. This requires sampling the random variable $\xi$ and the use of values of $F(\eta,x)$ given a sample $\eta$ of $\xi$ and a current point $x\in\re^n$ (a procedure often called ``stochastic oracle'' call).
In this context, there are two current methodologies for solving the SVI problem: \emph{sample average approximation} (SAA)  and \emph{stochastic approximation} (SA). In this paper we focus on the SA approach. 
For analysis of the SAA methodology for SP, see e.g., \cite{shapiro} and references therein. For the analysis of the SAA methodology for solving SVIs see e.g. \cite{robinson2, xu:zhang,xu1,xu2}.

We make some remarks regarding the solution of the SVI problem in Definition \ref{SVI.def} when using stochastic approximation (SA). The SA methodology for SP or SVI can be seen as a projection-type method where the exact mean operator $T$ is replaced along the iterations by a 
random sample of $F$. This approach induces a stochastic error $F(\xi,x)-T(x)$ for $x\in X$ in the trajectory of the method. In this solution method the generated sequence $\{x^k\}$ is unavoidably a stochastic process which evolves recursively according to the chosen projection algorithm and the sampling information used in every iteration. As a consequence, asymptotic convergence of the SA method guarantees a solution of Definition \ref{SVI.def} with \emph{total probability}. Precisely, limit points of the sequence $\{x^k\}$ are typically a random variable $x^*$ such that, with \emph{total probability}, $x^*\in X^*$. 

The first analysis of SA methodology for SVI was carried out recently in \cite{Xu}. When $X=\re^n$, Definition \ref{SVI.def} 
becomes the stochastic equation problem (SE), that is to say, under \eqref{expected}, almost surely find $x^*\in\re^n$ such that $T(x^*)=0$.	
The SA methodology was first proposed by Robbins and Monro in \cite{rob.monro} for the SE problem in the case in which 
$T$ is the gradient of a strongly convex function under specific conditions. Since this fundamental work, 
SA approaches for SP and, more recently, for SVI, have been carried out 
\cite{Xu, nem, uday0, Uday, wang, philip, lan, uday2, uday3, uday4, uday5}. 
See also \cite{kushner, bach} for other problems where the stochastic approximation procedure is relevant 
(such as machine learning, online optimization, repeated games, queueing theory, signal processing and control theory).

\subsection{Related work on SA and contributions}\label{ss1.1}

The first SA method for SVI was analyzed in \cite{Xu}. Their method is:
\begin{eqnarray}\label{algorithm.xu1}
x^{k+1}=\Pi[x^k-\alpha_k F(\xi^k,x^k)],
\end{eqnarray} 
where $\Pi$ is the Euclidean projection onto $X$, $\{\xi^k\}$ is a sample of $\xi$ and $\{\alpha_k\}$ is 
a sequence of positive steps. In \cite{Xu}, the almost sure (a.s.) convergence 
is proved assuming $L$-Lipschitz continuity of $T$, strong monotonicity or strict monotonicity of $T$, stepsizes satisfying
$
\sum_k\alpha_k=\infty,\sum_k\alpha_k^2<\infty
$
(with $0<\alpha_k<2\rho/L^2$, assuming that  $T$ is $\rho$-strongly monotone), 
and an unbiased oracle with uniform variance, i.e., there exists $\sigma>0$ such that for all $x\in X$,
\begin{equation}\label{noise.bound2}
\esp\left[\Vert F(\xi,x)-T(x)\Vert^2\right]\le\sigma^2.
\end{equation}

After the above mentioned work, recent research on SA methods for SVI have been developed 
in \cite{nem, uday0, Uday, wang, philip, lan, uday2, uday3, uday4, uday5}. Two of the main concerns in these 
papers were the extension of the SA approach to the general monotone case and the obtention of (optimal) 
convergence rate and complexity results with respect to known metrics associated to the VI problem. In order 
to analyze the monotone case, SA methodologies based on the extragradient method of Korpelevich \cite{korpelevich} 
and the mirror-prox algorithm of Nemiroviski \cite{nem2} were used in \cite{nem, lan, uday2, uday3, uday4}, 
and iterative Tykhonov and proximal regularization procedures (see \cite{uday1, konnov}), 
were used in \cite{uday0, Uday, philip, uday5}. Other objectives in some of these papers were the use of 
incremental constraint projections in the case of difficulties accessing the feasible set \cite{wang, philip}, 
the convergence analysis in the absence of the Lipschitz constant \cite{uday0, uday2, uday5}, 
and the distributed solution of Cartesian variational inequalities \cite{uday0, Uday, philip, uday1}. 

In Cartesian variational inequalities, a network of $m$ agents is associated to a coupled variational inequality with constraint set
$
X=X^1\times\cdots\times X^m
$
and operator
$
F=(F_1,\ldots,F_m),
$
where the $i$-th agent is associated to a constraint set $X^i\subset\re^{n_i}$ and a map $F_i:\Xi\times\re^n\rightarrow\re^{n_i}$ such that $n=\sum_{i=1}^mn_i$. Two important problems which can be formulated as stochastic Cartesian variational inequalities are the stochastic \emph{Nash equilibria} and the stochastic \emph{multi-user optimization} problem. See \cite{Uday} for a precise definition. In these problems, the $i$-th agent has only access to constraint set $X^i$ and $F_i$ (which depends on other agents' decision sets) so that a distributed solution of the SVI is required. As an example, the distributed variant of method \eqref{algorithm.xu1} studied in \cite{uday0} takes the form: for all $i=1,\ldots,m$,
$$
x^{k+1}_i=\Pi_i\left[x^k_i-\alpha_{k,i}F_i(\xi^k_i,x^k)\right],
$$
where $\Pi_i$ is the Euclidean projection onto $X^i$. Thus, the $i$-th agent updates its decision evaluating his operator $F_i$ and projecting onto its decision set $X_i$.

%

In this paper we propose the following extragradient method: given $x^k$, define
\begin{eqnarray}
z^k&=&\Pi\Bigg[x^k-\frac{\alpha_{k}}{N_{k}}\sum_{j=1}^{N_{k}}F(\xi^k_{j},x^k)\Bigg],
\label{method:eq1}\\
x^{k+1}&=&\Pi\Bigg[x^k-\frac{\alpha_{k}}{N_{k}}\sum_{j=1}^{N_{k}}F(\eta^k_{j},z^k)\Bigg],
\label{method:eq2}
\end{eqnarray}
where $\{N_k\}\subset\mathbb{N}$ is a non-decreasing sequence and $\{\xi_j^k,\eta_j^k:k\in\mathbb{N}, j=1,\ldots, N_k\}$ 
are independent identically distributed (i.i.d.) samples of $\xi$. We call $\{N_k\}$ the \emph{sample rate} sequence. 

Next we make some observations regarding merit functions and complexity estimates. A merit function for 
VI$(T,X)$ is a non-negative function $f$ over $X$ such that $X^*=X\cap f^{-1}(0)$. An unrestricted 
merit function $f$ for VI$(T,X)$ is a merit function such that $X^*=f^{-1}(0)$. For any $\alpha>0$ 
we consider the \emph{natural residual function} $r_\alpha$, defined, for any $x\in\re^n$, by
$ 
r_\alpha(x):=\Vert x-\Pi(x-\alpha T(x))\Vert.
$ 
It is well known that $r_\alpha$ is an unrestricted merit function for VI$(T,X)$. Given $\epsilon>0$, 
we consider an iteration index $K=K_\epsilon$ (whose existence will be proved in Section \ref{ss3.4}), such that 
$
\mathbb{E}[r_\alpha(x^K)^2]<\epsilon,
$
and we look at 
$\mathbb{E}[r_\alpha(x^K)^2]$ as a {\it non-asymptotic convergence rate}. In particular, we
will have an $\mathcal{O}(1/K)$ convergence rate if 
$\mathbb{E}[r_\alpha(x^K)^2]\le Q/K$ for some constant $Q>0$ (depending on the initial iterate
and the parameters of the problem and the method). The (stochastic) {\it oracle complexity} will be defined as the total number of oracle
calls needed for $\mathbb{E}[r_\alpha(x^K)^2]<\epsilon$ to hold, i.e., $\sum_{k=1}^{K}2N_k$.  

Besides the natural residual, other merit functions were considered in prior work on SVI. 
Given a compact feasible set $X$, the \emph{dual gap-function} of VI$(T,X)$ is defined as $G(x):=\sup_{y\in X}\langle T(y),x-y\rangle$ for $x\in X$. 
In \cite{nem, lan, uday2, uday5}, rate of convergence were given in terms of the expected value of $G$ when $X$ is compact or, when 
$X$ is unbounded, in terms of the relaxed dual-gap function $\tilde G(x,v):=\sup_{y\in X}\langle T(y)-v,x-y\rangle$, 
introduced by Monteiro and Svaiter \cite{svaiter1, svaiter2}, based on the enlargement of monotone 
operators introduced in \cite{iusem3}. When $X$ is compact, the dual gap-function is a modification of the \emph{primal gap-function}, 
defined as $g(x):=\sup_{y\in X}\langle T(x),x-y\rangle$ for $x\in X$. Both the primal and dual gap-functions are 
continuous only if $X$ is compact. A variation suitable for unbounded feasible sets is 
the \emph{regularized gap-function}, defined, for fixed $a>0$, as
$
g_{a}(x):=\sup_{y\in X}\{\langle T(x),x-y\rangle-\frac{a}{2}\Vert x-y\Vert^2\},
$
for $x\in\re^n$. The regularized gap-function is continuous over $\re^n$. Another variation is 
the so called \emph{D-gap function}. It is defined, for fixed $b>a>0$, as
$
g_{a,b}(x):=g_a(x)-g_b(x),
$
for $x\in\re^n$. It is well known that $g_{a,b}:\re^n\rightarrow\re_+$ is a 
continuous unrestricted merit function for VI$(T,X)$. Moreover, the quadratic natural residual and 
the D-gap function are equivalent merit functions in the sense that, given $b>a>0$, there are constants $c_1,c_2>0$ such that for all $x\in\re^n$, $c_1 r_{b^{-1}}(x)^2\le g_{a,b}(x)\le c_2 r_{a^{-1}}(x)^2$ (see \cite{facchinei}, Theorems 10.2.3, 10.3.3 and Proposition 10.3.7). These properties hold independently of the compactness of $X$. 

Next we resume the contributions of the algorithm presented in this paper.

\medskip
\noindent
i) \textbf{Asymptotic-convergence}: Assuming  \emph{pseudo-monotonicity} of $F$, and using an 
extragradient scheme, without regularization, we prove that, almost surely, the generated sequence is bounded, 
its distance to the solution set converges to zero and its natural residual value converges to zero a.s. and in $L^2$. 
Note that monotonicity implies pseudo-monotonicity. See \cite{uday4} for examples where the more general setting of pseudo-monotonicity is relevant 
(stochastic fractional programming, stochastic optional pricing and stochastic economic equilibria). The sequence generated 
by our method also possesses a new stability feature: for $p=2$ or any $p\ge4$, if the random operator has finite $p$-moment 
then the sequence is bounded in $L^p$, and we are able to provide explicit upper bounds in terms of the problem parameters. 
Previous work required a bounded monotone operator, specific forms of (pseudo)-monotonicity 
(monotonicity with acute angle, pseudo-monotonicity-plus, strict pseudo-monotonicity,  
symmetric pseudo-monotonicity or strong pseudo-monotonicity as in \cite{uday3,uday4}), or regularization procedures. 
The disadvantage of regularization procedures in the absence of strong monotonicity is the need to introduce additional 
coordination between the stepsize sequence and the regularization parameters. Also, the regularization induces
a suboptimal performance in terms of rate and complexity (see \cite{uday5}).   

\medskip
\noindent
ii) \textbf{Faster convergence rate with oracle complexity efficiency}: To the best of our knowledge, our work is
 the first SA method for SVI with \emph{stepsizes bounded away from zero}. Such feature allows our method to achieve a faster 
convergence rate $\mathcal{O}(1/K)$ in terms of the mean-squared natural residual under plain pseudo-monotonicity
(with no regularization requirements). As a consequence, our method achieves a convergence rate of $\mathcal{O}(1/K)$ 
in terms of the mean D-gap function value of the generated sequence. In previous works, methods with diminishing 
stepsizes satisfying $\sum_k\alpha_k=\infty$, $\sum_k\alpha_k^2<\infty$ were used, achieving a $\mathcal{O}(1/K)$ rate in 
terms of the mean-squared distance to $X^*$, with more demanding monotonicity assumptions (namely, bounded strongly pseudo-monotone 
operators and bounded monotone weak-sharp VI) and a  rate $\mathcal{O}(1/\sqrt{K})$ in terms of mean gap function values of the 
ergodic average of the generated sequence in the case of bounded monotone operators. Importantly, our method preserves the 
optimal oracle complexity $\mathcal{O}(\epsilon^{-2})$ up to a first order logarithmic term. 
By accelerating the rate, we reduce the computational complexity (in terms of projection computations), preserving a near-optimal 
oracle complexity. It should be noted that such acceleration represents the closing of the gap from the stochastic to the deterministic 
and it is distinct in nature from the acceleration of differentiable convex optimization problems using 
Nesterov-type gradient methods. We provide explicit upper bounds for the rate and complexity in terms of the problem parameters. 
As a corollary of our result we provide new classes of SVIs for which a convergence rate of $\mathcal{O}(1/K)$ holds in terms of 
the mean-squared distance to the solution set (see Section \ref{s4}). We remark that for  compact $X$, it is possible to show that 
the proposed extragradient method achieves a rate $\mathcal{O}(\frac{\ln K}{K})$ in terms of the mean dual gap-function value of the 
ergodic average of the generated sequence with an optimal oracle complexity (up to a logarithmic factor). If different set of weights are used in the ergodic average (such as window-based averaging, Nesterov-like extrapolation and other schemes) then our method achieves a rate $\mathcal{O}(\frac{1}{K})$ with an optimal oracle complexity. See e.g. \cite{shapiro2, uday2, lan}. In the context of large dimension data ($n\gg1$), our algorithm complexity is independent of the dimension $n$ (see Proposition \ref{pp1}). 
	
\medskip
\noindent
iii) \textbf{Unbounded setting}: The results in items (i)-(ii) are valid for an \emph{unbounded feasible set} 
and \emph{unbounded operator}. Important examples of such a setting include complementarity problems and systems of equations. 
Asymptotic convergence for an unbounded feasible set is analyzed in \cite{uday0, wang, uday4, uday5} with more demanding 
monotonicity hypotheses, and in \cite{Uday, philip} for the monotone case, but with an additional regularization procedure. 
To the best of our knowledge, convergence rates in the case of an unbounded feasible set were treated only in \cite{wang, lan}. 
In \cite{wang}, a convergence rate is given only for strongly monotone operators. In \cite{lan}, assuming \emph{uniform variance} 
over $X$ (in the sense of \eqref{noise.bound2}), a convergence rate of $\mathcal{O}(1/\sqrt{K})$ for the ergodic average of the 
iterates is achieved in terms of the mean value of a relaxed gap function recently introduced by Monteiro and
 Svaiter \cite{svaiter1}-\cite{svaiter2}. It should be noted however that, even when assuming uniform variance, the sequence 
of the iterates generated by their method may diverge to $\infty$ (see Example \ref{example1}). Our convergence analysis in 
items (i)-(ii) does not depend upon boundedness assumptions, and we prove the faster rate of $\mathcal{O}(1/K)$ in terms of the 
mean (quadratic) natural residual and the mean D-gap function, which are new results. The natural residual and the D-gap 
function are better behaved than the (standard) gap function: 
the former are finite valued and Lipschitz continuous over $\re^n$, 
while the later is finite valued and continuous only for a compact $X$.  

\medskip
\noindent
iv) \textbf{Non-uniform variance}: Accordingly to what we know, all previous works require that the variance 
of the oracle error be \emph{uniform} over $X$ (in the sense of \eqref{noise.bound2}), excepting in \cite{wang} for the strongly 
monotone case, and in \cite{philip} for the case of a weak-sharp monotone operator, and also  for the monotone case with an 
iterative Tykhonov regularization (with no convergence rate results). Such uniform variance assumption holds for bounded operators, 
but not for unbounded ones, on an unbounded feasible set. Typical situations where this assumption fails to hold include affine 
complementarity problems and systems of equations. In such cases, the variance of the oracle error tends (quadratically) 
to $\infty$ in the horizon (see Example \ref{example2}). The performance of our method, in terms of the oracle complexity, 
depends on the point $x^*\in X^*$ with \emph{minimal} trade-off between variance and distance to initial iterates 
``ignoring'' points with high variance (see comments after Theorem \ref{thm:convergence:rate} and Section \ref{sss3.4.1}). 
This result also improves over the case in which \eqref{noise.bound2} \emph{does} holds but $\sigma(x^*)^2\ll\sigma^2$ or, over 
the case in which $X$ is compact but $\Vert x^0 -x^*\Vert\ll\diam(X)$. In conclusion, the performance of 
method \eqref{method:eq1}-\eqref{method:eq2} depends on solution points $x^*$ with minimal variance, compared to the conservative
 upper bound $\sigma^2$, and minimal distance to initial iterates. In the case of uniform variance over $X^*$ or $X$, 
we obtain sharper estimates of rate and complexity in item (ii). 		

\medskip
\noindent
v) \textbf{Distributed solution of multi-agent system}: The analysis in items (i)-(iv) also holds true for 
the distributed solution of stochastic Cartesian variational inequalities, in the spirit of \cite{uday0, Uday, philip, uday1}. 
In our framework (see Algorithm \eqref{algorithm.extra.cte1}-\eqref{algorithm.extra.cte2}), agents update synchronous 
stepsizes bounded away from zero over a range $(0,\mathcal{O}(1)L^{-1})$. An advantage of the extragradient approach in the 
distributed case is that we do not require iterative regularization procedures as in \cite{Uday, philip, uday1}, for coping with 
the merely monotone case. This implies that the faster convergent rate of $\mathcal{O}(1/K)$ is achievable with a near-optimal oracle
complexity under weaker conditions (such as unbounded set and non-uniform variance). As discussed later on, our 
algorithm requires the choice of a 
sampling rate for dealing with the setting of items (i)-(iv). Hence, in the distributed solution case, 
agents should have the choice of sharing their oracle calls or not, and we allow both options. 
In the later case of fully distributed sampling, the oracle complexity has higher order dependence in terms of the network 
dimension $m$, which may be demanding in the context of large networks ($m\gg1$). For this case, 
if an estimate of $m$ is available and a decreasing sequence of (deterministic) 
parameters $\{b_i\}_{i=1}^m$ is shared (in any order) among agents, then our algorithm has oracle complexity of 
order $m (a^{-1}\epsilon^{-1})^{2+a}$ for arbitrary $a>0$ (up to a scaling factor in the sample rate). See 
Proposition \ref{pp2}. Further dimension reduction possibilities will be the subject of future work.

For achieving the results of items (i)-(v), we employ an iterative \emph{variance reduction} procedure. This means that, 
instead of calling the oracle once per iteration (as in previous SA methods for SVI studied so far), 	
our method calls the oracle $N_k$ times at iteration $k$ and uses the associated empirical average of the values of the random operator 
$F$ at the current iterates $x^k$ and $z^k$ (see \eqref{method:eq1}-\eqref{method:eq2}). Since the presence of the stochastic error destroys
the strict Fej\'er property (satisfied by the generated sequence in the deterministic setting), the mentioned variance reduction 
procedure is the mechanism that allows our extragradient method to converge in an unbounded setting with stepsizes bounded away from zero, 
and to achieve an accelerated rate in terms of the natural residual. Such variance reduction scheme is efficient since we maintain a 
near-optimal oracle complexity when compared to the classical SA method. Precisely, given a prescribed tolerance $\epsilon>0$, the 
classical SA method requires $\mathcal{O}(\epsilon^{-2})$ iterations, $\mathcal{O}(\epsilon^{-2})$ samples and a final ergodic average
 of size $\mathcal{O}(\epsilon^{-2})$. As will be seen in Prop. \ref{pp1}, our method requires $K:=\mathcal{O}(\epsilon^{-1})$ 
iterations, $\mathcal{O}(\epsilon^{-2})$ samples and, for $k\le K$, the $k$-th iteration computes an empirical average of size 
$k$ (up to a first order logarithmic factor). Hence the total cost in averaging is also $\mathcal{O}(\epsilon^{-2})$ (again, 
up to a first order logarithmic 
factor). In conclusion, our method uses the same amount of total samples and same effort in averaging as in the classical SA method but 
with empirical averages with smaller sizes distributed along iterations instead of one ergodic average at the final iteration. This is 
the reason for improving the required number of iterations from $\mathcal{O}(\epsilon^{-2})$ to $\mathcal{O}(\epsilon^{-1})$ and 
thus reducing the number of projections by one order.\footnote{The possibility of distributing the empirical averages along iterations 
is possible due to the on-line nature of the SA method. This is not shared by the SAA methodology which is an off-line procedure.} 
The use of empirical averages along iterations is also the reason we can include unbounded operators, oracles with non-uniform variance 
and give estimates which depend on the variance at points of the trajectory of the method and at points of $X^*$ (but not on the whole $X$). 
Such results are not shared by the SAA method and SA with constant $N_k$. In order to obtain these results, we use martingale 
moment inequalities and a supermartingale convergence theorem (see Section \ref{ss2.2}). Our sampling procedure also possesses 
a \emph{robust} property: a scaling factor on the sampling rate maintains the progress of the algorithm with proportional scaling in 
the convergence rate and oracle complexity (see Propositions \ref{pp1}, \ref{pp3} and \ref{pp2}, and \cite{shapiro2} for robust methods). 
In Examples \ref{example2} and \ref{example1} of Section \ref{ss3.2} 
we show typical situations where such variance reduction procedure is relevant or even necessary. 

To the best of our knowledge the variance reduction procedure mentioned above is new for SA solution of SVI. Moreover, it seems 
that the derivation of the faster rate of $\mathcal{O}(1/K)$ with a near-optimal stochastic oracle complexity, an unbounded feasible 
set and an oracle with non-uniform variance is also new for convex stochastic programming. During the preparation of this paper we 
became aware of references \cite{wainwright, nocedal, friedlander, homem-de-mello, ferris, shanbhag:blanchet}, 
where variable sample-size methods are studied for stochastic optimization. We treat the general case of pseudo-monotone 
variational inequalities with weaker assumptions. Also, our analysis differs somewhat from these works relying on martingale 
and optimal stopping techniques. In \cite{wainwright, nocedal, friedlander, shanbhag:blanchet} the SA approach is studied
for convex stochastic optimization problems. In \cite{nocedal, shanbhag:blanchet}, the focus is on 
gradient descent methods applied to strongly convex optimization problems. In \cite{friedlander} the strong convexity property 
is slightly weakened by assuming a special error bound on the solution set (which is satisfied by strongly convex optimization 
problems in particular). In \cite{nocedal, friedlander} the optimization problem is unconstrained while in \cite{shanbhag:blanchet} 
the problem has a compact feasible set.
In \cite{nocedal},  second order information is assumed and an adaptive sample size selection is used. In \cite{friedlander} uniform 
boundedness assumptions are required. In \cite{wainwright}, a variant of the dual averaging method of Nesterov \cite{nesterov}
 is applied for solving non-smooth stochastic convex optimization, assuming a compact feasible set and uniform variance. A constant 
oracle call per iteration $N_k\equiv N>1$ is used,  obtaining a convergence rate of  $\mathcal{O}(1/\sqrt{KN})$ for the ergodic 
average of the sequence, while we typically use $N_k=\mathcal{O}(k(\ln k)^{1+b})$ with $b>0$ obtaining a rate of $\mathcal{O}(1/K)$ 
for the generated sequence. In \cite{homem-de-mello, ferris},  
the SAA approach for stochastic optimization is studied. This is an implicit method, unlike the SA methodology. Also, 
uniform boundedness assumptions are required. In \cite{ferris} the focus is on unconstrained optimization, with second order information,
using Bayesian analysis for an adaptive choice of
$N_k$. See also \cite{ghadimi:lan:zhang}.

The paper is organized as follows: in Section \ref{s2} we present notation and preliminaries, 
including the required probabilistic tools. In Section \ref{s3} we present the algorithm and its convergence analysis. 
In Subsection \ref{ss3.1} the algorithm is formally presented while in Subsection \ref{ss3.2} the assumptions required for its 
analysis are discussed. Subsection \ref{ss3.3} presents the convergence analysis while Subsection \ref{ss3.4} focuses on convergence rates and complexity results.

\section{Preliminaries}\label{s2}

\subsection{Projection operator and notation}

For $x,y\in\re^n$, we denote $\langle x,y\rangle$ the standard inner product, and $\Vert x\Vert=\sqrt{\langle x,x\rangle}$ 
the correspondent Euclidean norm. Given $C\subset\re^n$ and $x\in\re^n$, we use the 
notation $\dist(x,C):=\inf\{\Vert x-y\Vert:y\in C\}$. For a closed and 
convex set $C\subset\mathbb{R}^n$, we use the notation 
$\Pi_{C}(x):=\argmin_{y\in C}\Vert y-x\Vert^2$ for $x\in\re^n$. Given $H:\re^n\rightarrow\re^n$, S$(H,C)$ denotes the solution 
set of VI$(H,C)$. For a matrix $B\in\re^{n \times n}$, we use the notation $\Vert B\Vert:=\sup_{x\neq 0}\Vert Bx\Vert/\Vert x\Vert$. 
We use the notation $[m]:=\{1,\ldots,m\}$ for $m\in\mathbb{N}$ and $(\alpha_i)_{i=1}^m:=(\alpha_1,\ldots,\alpha_m)$ 
for $\alpha_i\in\re$ and $i\in[m]$. We also use the notation $\mathbb{N}_0:=\mathbb{N}\cup\{0\}$. 
We use the abbreviation ``RHS'' for ``right hand side''. Given sequences $\{x^k\}$ and $\{y^k\}$, we use the 
notation $x^k=\mathcal{O}(y^k)$ or $\Vert x^k\Vert\lesssim\Vert y^k\Vert$ to mean that there exists a constant $C>0$ 
such that $\Vert x^k\Vert\le C\Vert y^k\Vert$ for all $k$. The notation 
$\Vert x^k\Vert\sim\Vert y^k\Vert$ means that $\Vert x^k\Vert\lesssim\Vert y^k\Vert$ and $\Vert y^k\Vert\lesssim\Vert x^k\Vert$. 
Given a $\sigma$-algebra $\alg$ and a  
random variable $\xi$, we denote by $\esp[\xi]$, $\esp[\xi|\alg]$, and $\var[\xi]$, the expectation, conditional expectation and 
variance, respectively. We denote by $\covar[B]$ the covariance of a random vector $B$. Also, we write $\xi\in\alg$ 
for ``$\xi$ is $\alg$-measurable''. We denote by $\sigma(\xi_1,\ldots,\xi_k)$ the $\sigma$-algebra generated by the 
random variables $\xi_1,\ldots,\xi_k$. Given the random variable $\xi$ and $p\ge1$, $\Lpnorm{\xi}$ is the $L^p$-norm of $\xi$ and 
$
\Lpnorm{\xi\,|\alg}:=\sqrt[p]{\esp\left[|\xi|^p\,|\alg\right]}
$
is the $L_p$-norm of $\xi$ conditional to the $\sigma$-algebra $\alg$. $N(\mu,\sigma^2)$ denotes the normal distribution 
with mean $\mu$ and variance $\sigma^2$. Given  $x\in\re$, we denote by $x_+:=\max\{0,x\}$ its positive part and by $\lceil x\rceil$ 
the smallest integer greater or equal to $x$.

The following properties of the projection operator are well known; see Chapter 1 of \cite{facchinei}.
\begin{lema}\label{proj}
Take a non-empty closed and convex set $C\subset\mathbb{R}^n$. Then
\begin{itemize}
\item[i)] Given $x\in\mathbb{R}^n$, $\Pi_{C}(x)$ is the unique point of $C$ satisfying the following property: 
$\langle x-\Pi_{C}(x),y-\Pi_{C}(x)\rangle\le0$, for all $y\in C$.
\item[ii)] For all $x\in\mathbb{R}^n, y\in C$,
$
\Vert \Pi_{C}(x)-y\Vert^2+\Vert \Pi_{C}(x)-x\Vert^2\le\Vert x-y\Vert^2.
$	
\item[iii)]For all $x,y\in\mathbb{R}^n$,
$
\Vert \Pi_{C}(x)-\Pi_{C}(y)\Vert\le\Vert x-y\Vert.
$	
\item[iv)]Given $\alpha>0$ and $H:\re^n\rightarrow\re^n$, $\mbox{\emph{S}}(H,C)=\{x\in\re^n:x=\Pi_{C}[x-\alpha H(x)]\}$.
\end{itemize}
\end{lema}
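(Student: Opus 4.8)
The plan is to prove part (i) first, directly from the definition $\Pi_{C}(x)=\argmin_{y\in C}\Vert y-x\Vert^2$, and then to deduce (ii)--(iv) from it. Existence and uniqueness of the minimizer follow from the usual argument: the map $y\mapsto\Vert y-x\Vert^2$ is continuous and coercive and $C$ is nonempty and closed, so a minimizer $p$ exists, and strict convexity of the objective forces it to be unique. For the variational inequality characterizing $p=\Pi_{C}(x)$, I would use a one-sided perturbation: given $y\in C$, convexity of $C$ gives $p+t(y-p)\in C$ for $t\in[0,1]$, so $\Vert x-p\Vert^2\le\Vert x-p-t(y-p)\Vert^2=\Vert x-p\Vert^2-2t\langle x-p,y-p\rangle+t^2\Vert y-p\Vert^2$; dividing by $t>0$ and letting $t\to0^+$ yields $\langle x-p,y-p\rangle\le0$. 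Conversely, if $p\in C$ satisfies $\langle x-p,y-p\rangle\le0$ for every $y\in C$, then the identity $\Vert x-y\Vert^2=\Vert x-p\Vert^2+2\langle x-p,p-y\rangle+\Vert p-y\Vert^2\ge\Vert x-p\Vert^2$ shows $p$ minimizes the distance; uniqueness is obtained by inserting two candidate points into each other's inequality and adding.

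Part (ii) is then read off the same expansion: with $p=\Pi_{C}(x)$ and $y\in C$, $\Vert x-y\Vert^2=\Vert x-p\Vert^2+\Vert p-y\Vert^2+2\langle x-p,p-y\rangle$, and (i) gives $\langle x-p,p-y\rangle\ge0$, whence $\Vert x-y\Vert^2\ge\Vert\Pi_{C}(x)-x\Vert^2+\Vert\Pi_{C}(x)-y\Vert^2$. For part (iii), set $p=\Pi_{C}(x)$ and $q=\Pi_{C}(y)$; applying (i) twice gives $\langle x-p,q-p\rangle\le0$ and $\langle y-q,p-q\rangle\le0$, and adding these two inequalities yields $\langle(x-y)-(p-q),q-p\rangle\le0$, i.e.\ $\Vert p-q\Vert^2\le\langle x-y,p-q\rangle\le\Vert x-y\Vert\,\Vert p-q\Vert$ by Cauchy--Schwarz; dividing by $\Vert p-q\Vert$ (the case $p=q$ being trivial) gives nonexpansiveness.

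For part (iv), observe that for $\alpha>0$ one has $x\in\mbox{S}(H,C)$ iff $x\in C$ and $\langle H(x),y-x\rangle\ge0$ for all $y\in C$, iff $\langle(x-\alpha H(x))-x,\,y-x\rangle\le0$ for all $y\in C$, which by the characterization in (i) --- applied to the point $x-\alpha H(x)$ being projected --- is precisely $x=\Pi_{C}[x-\alpha H(x)]$.

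I do not anticipate any genuine obstacle: all four items are classical facts about the metric projection onto a closed convex set in $\re^n$. The only step needing a little care is the existence and uniqueness of $\Pi_{C}(x)$ underpinning (i), which is the standard finite-dimensional specialization of the Hilbert-space projection theorem; everything else is elementary manipulation with the inner product.
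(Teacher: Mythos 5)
Your proposal is correct and complete: the one-sided perturbation argument for the variational characterization in (i), the three-term expansion for (ii), the two-fold application of (i) plus Cauchy--Schwarz for (iii), and the reformulation of the VI condition as the projection fixed-point equation for (iv) are all sound. The paper itself offers no proof of this lemma --- it simply cites Chapter 1 of Facchinei--Pang as the source of these ``well known'' properties --- and your argument is precisely the standard one found there, so there is nothing to compare beyond noting that you have supplied the proof the paper omits.
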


\subsection{Probabilistic tools}\label{ss2.2}
As in other stochastic approximation methods, a fundamental tool to be used is the following convergence theorem of 
Robbins and Siegmund (see \cite{rob}).
\begin{thm}\label{rob}
Let $\{y_k\},\{u_k\}, \{a_k\}, \{b_k\}$ be sequences of nonnegative integrable 
random variables, adapted to the filtration $\{\alg_k\}$, such that a.s. for all $k\in\mathbb{N}$,
$
\esp\big[y_{k+1}\big| \alg_k\big]\le(1+a_k)y_k-u_k+b_k,
$
$\sum a_k<\infty$ and $\sum b_k<\infty$. Then, a.s. $\{y_k\}$ converges and $\sum u_k<\infty$.
\end{thm}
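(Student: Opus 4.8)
The plan is to turn $\{y_k\}$ into a nonnegative supermartingale, apply the classical convergence theorem for nonnegative supermartingales (Doob), and then read off both assertions. First I would define the multiplicative weights $\beta_1:=1$ and $\beta_{k+1}:=\beta_k/(1+a_k)$, so that $\beta_k=\prod_{j=1}^{k-1}(1+a_j)^{-1}$, each $\beta_{k+1}$ is $\alg_k$-measurable, $0\le\beta_{k+1}\le\beta_k\le1$ pathwise, and, because $a_k\ge0$ with $\sum_k a_k<\infty$ almost surely, the increasing partial products $\prod_{j\le k}(1+a_j)$ have an almost surely finite limit; hence $\beta_k\downarrow\beta_\infty$ with $\beta_\infty>0$ almost surely. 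Multiplying the hypothesis $\esp[y_{k+1}\mid\alg_k]\le(1+a_k)y_k-u_k+b_k$ by the nonnegative $\alg_k$-measurable factor $\beta_{k+1}$ and using $\beta_{k+1}(1+a_k)=\beta_k$ gives
\[
\esp[\beta_{k+1}y_{k+1}\mid\alg_k]\le\beta_ky_k-\beta_{k+1}u_k+\beta_{k+1}b_k\qquad\text{a.s.}
\]

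Next I would introduce the compensated process
\[
v_k:=\beta_ky_k+\sum_{j=1}^{k-1}\beta_{j+1}u_j-\sum_{j=1}^{k-1}\beta_{j+1}b_j,
\]
which is $\alg_k$-measurable and integrable since $0\le\beta_k\le1$ and $y_k,u_j,b_j$ are integrable. Adding $\sum_{j=1}^{k}\beta_{j+1}(u_j-b_j)$ to both sides of the previous display yields $\esp[v_{k+1}\mid\alg_k]\le v_k$ a.s., so $\{v_k\}$ is a supermartingale. It is bounded below only by the almost surely finite random variable $-\sum_{j\ge1}\beta_{j+1}b_j\ge-\sum_{j\ge1}b_j$, not by a constant, so I would localize: for $m\in\mathbb{N}$ set $\tau_m:=\inf\{k\ge1:\sum_{j=1}^{k}(a_j+b_j)\ge m\}$, a stopping time for $\{\alg_k\}$. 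Because $\sum_{j=1}^{(k\wedge\tau_m)-1}\beta_{j+1}b_j\le\sum_{j=1}^{\tau_m-1}(a_j+b_j)<m$, the stopped process satisfies $v_{k\wedge\tau_m}\ge-m$ pathwise; being also a supermartingale, $v_{k\wedge\tau_m}+m$ is a nonnegative supermartingale and hence converges almost surely. On the event $\{\tau_m=\infty\}$ this says that $v_k$ converges to a finite limit, and since $\bigcup_m\{\tau_m=\infty\}=\{\sum_j(a_j+b_j)<\infty\}$ has full probability, $v_k$ converges almost surely to a finite limit.

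Finally I would unwind the definition of $v_k$. Adding back the almost surely convergent series $\sum_{j\ge1}\beta_{j+1}b_j$ shows that $\beta_ky_k+\sum_{j=1}^{k-1}\beta_{j+1}u_j$ converges almost surely; here the first summand is nonnegative and the second is nondecreasing in $k$ (as $u_j\ge0$ and $\beta_{j+1}>0$), so the nondecreasing sequence $\sum_{j=1}^{k-1}\beta_{j+1}u_j$ stays bounded, i.e.\ $\sum_j\beta_{j+1}u_j<\infty$ almost surely, and since $\beta_{j+1}\ge\beta_\infty>0$ this gives $\sum_j u_j<\infty$ almost surely. Consequently $\beta_ky_k$ converges almost surely to a finite limit, and dividing by $\beta_k\to\beta_\infty>0$ shows that $\{y_k\}$ converges almost surely.

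I expect the genuinely delicate points to be, first, the localization step, which is forced by the fact that $\{v_k\}$ is not bounded below by a deterministic constant when the summability of $\{b_k\}$ holds only almost surely; and, second, the two appeals to $\beta_\infty>0$ --- in passing from $\sum_j\beta_{j+1}u_j<\infty$ to $\sum_j u_j<\infty$, and from convergence of $\beta_ky_k$ to that of $y_k$ --- which is precisely where the hypothesis $\sum_k a_k<\infty$ enters. The remainder is routine bookkeeping with nonnegative, nondecreasing partial sums and the measurability and integrability checks needed to legitimize the conditional-expectation manipulations.
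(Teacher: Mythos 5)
Your argument is correct, and it is essentially the classical proof of the Robbins--Siegmund theorem. The paper itself offers no proof of this statement --- it is quoted directly from Robbins and Siegmund's 1971 paper with a citation --- so there is no in-paper argument to compare against; your reduction to a nonnegative supermartingale via the weights $\beta_k=\prod_{j<k}(1+a_j)^{-1}$, the compensation by the partial sums of $\beta_{j+1}u_j$ and $\beta_{j+1}b_j$, and the localization by the stopping times $\tau_m$ (needed precisely because $\sum_k a_k<\infty$ and $\sum_k b_k<\infty$ are only almost-sure statements, so $v_k$ has no deterministic lower bound) reproduce the standard argument faithfully. All the delicate points you flag --- the measurability of $\beta_{k+1}$ with respect to $\alg_k$, the strict positivity of $\beta_\infty$ coming from $\sum_k a_k<\infty$, and the passage from $\sum_j\beta_{j+1}u_j<\infty$ to $\sum_j u_j<\infty$ --- are handled correctly.
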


If a.s. for all $k\in\mathbb{N}$, $\esp[y_{k+1}|\alg_k]=y_k$ then $\{y_k,\alg_k\}$ 
is called a \emph{martingale}. We shall also need the following moment inequality; see \cite{burk,marinelli}.
\begin{thm}[The inequality of Burkholder-Davis-Gundy]\label{t1}
We denote as $\Vert\cdot\Vert$ the Euclidean norm in $\re^n$. For all $\qc\ge1$, there exists $C_\qc>0$ 
such that for any vector-valued martingale $(y_i,\alg_i)_{i=0}^N$ taking values in $\re^n$ with $y_0=0$, it holds that
\begin{equation}\label{ee4}
\Lqcnorm{\Vert y_N\Vert}\le\Lqcnorm{\sup_{i\leq N}\Vert y_i\Vert}\leq C_\qc\,\Lqcnorm{\sqrt{\sum_{k=1}^N\Vert y_i-y_{i-1}\Vert^2}}.
\end{equation}
\end{thm}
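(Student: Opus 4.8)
The left-hand inequality is immediate: $\Vert y_N\Vert\le\sup_{i\le N}\Vert y_i\Vert$ pointwise, and the $L^\qc$-norm is monotone. The content is therefore the maximal (right-hand) estimate, i.e.\ the discrete-time Burkholder--Davis--Gundy inequality. Write $\Delta y_i:=y_i-y_{i-1}$, $S_N:=\bigl(\sum_{i=1}^N\Vert\Delta y_i\Vert^2\bigr)^{1/2}$ and $y_N^*:=\sup_{i\le N}\Vert y_i\Vert$. We may assume $\Lqcnorm{S_N}<\infty$ (otherwise there is nothing to prove); since $y_N^*\le\sum_{i=1}^N\Vert\Delta y_i\Vert\le\sqrt N\,S_N$, this forces $\Lqcnorm{y_N^*}<\infty$ as well, and this finiteness is what makes the absorption steps below legitimate. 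I would split the argument at $\qc=2$.

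For $\qc\ge2$ I would argue directly. When $\qc=2$ the claim is immediate: orthogonality of the martingale increments gives $\esp[\Vert y_N\Vert^2]=\esp[S_N^2]$, and Doob's maximal inequality gives $\esp[(y_N^*)^2]\le4\,\esp[S_N^2]$. For $\qc>2$, expanding $\Vert y_i\Vert^2-\Vert y_{i-1}\Vert^2=\Vert\Delta y_i\Vert^2+2\inner{y_{i-1}}{\Delta y_i}$ and summing gives $\Vert y_N\Vert^2=S_N^2+2M_N$, where $M_N:=\sum_{i=1}^N\inner{y_{i-1}}{\Delta y_i}$ is a scalar martingale null at $0$ (each increment has vanishing conditional mean, by the martingale property of $\{y_i\}$). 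I would then invoke the elementary second-order bound
\[
\Vert a+h\Vert^\qc\le\Vert a\Vert^\qc+\qc\,\Vert a\Vert^{\qc-2}\inner{a}{h}+c_\qc\bigl(\Vert a\Vert^{\qc-2}\Vert h\Vert^2+\Vert h\Vert^\qc\bigr),
\]
valid for $\qc\ge2$ (from a Taylor expansion of $x\mapsto\Vert x\Vert^\qc$, whose Hessian has operator norm $\le\qc(\qc-1)\Vert x\Vert^{\qc-2}$), with $a=y_{i-1}$, $h=\Delta y_i$: taking $\esp[\,\cdot\mid\alg_{i-1}]$ kills the linear term, and summing over $i$ and taking expectations gives
\begin{align*}
\esp\bigl[\Vert y_N\Vert^\qc\bigr]
&\le c_\qc\,\esp\Bigl[\sum_{i=1}^N\Vert y_{i-1}\Vert^{\qc-2}\Vert\Delta y_i\Vert^2+\sum_{i=1}^N\Vert\Delta y_i\Vert^\qc\Bigr]\\
&\le c_\qc\,\esp\bigl[(y_N^*)^{\qc-2}S_N^2\bigr]+c_\qc\,\esp\bigl[S_N^\qc\bigr],
\end{align*}
where I used $\sum_i\Vert\Delta y_i\Vert^\qc\le S_N^\qc$ (superadditivity of $t\mapsto t^{\qc/2}$ on $\re_+$, since $\qc/2\ge1$). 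Applying H\"older with exponents $\tfrac{\qc}{\qc-2},\tfrac{\qc}{2}$ to the first term and Doob's $L^\qc$-maximal inequality $\Lqcnorm{y_N^*}\le\tfrac{\qc}{\qc-1}\Lqcnorm{\Vert y_N\Vert}$ yields a scalar inequality $a^\qc\le A\,a^{\qc-2}b^2+B\,b^\qc$ with $a:=\Lqcnorm{\Vert y_N\Vert}$, $b:=\Lqcnorm{S_N}$ and $A,B$ depending only on $\qc$; since $a,b<\infty$, this forces $a\le C\,b$ for a constant $C=C(\qc)$ (divide by $b^\qc$, then by $(a/b)^{\qc-2}\ge1$, the case $a\le b$ being trivial), and one more application of Doob gives $\Lqcnorm{y_N^*}\le C_\qc\Lqcnorm{S_N}$.

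For $1\le\qc<2$ the convexity identity is unavailable, and I would pass through a good-$\lambda$ comparison. Fixing $\beta>1$, a stopping-time argument — stop $\{y_i\}$ at the first time $\Vert y_i\Vert$ exceeds $\lambda$, respectively $\beta\lambda$, and control the stopped increment-martingale on the event $\{S_N\le\delta\lambda\}$ by a conditional $L^2$ (Chebyshev) bound — produces an inequality of the form $\prob\bigl(y_N^*>\beta\lambda,\ S_N\le\delta\lambda\bigr)\le c_\beta\,\delta^2\,\prob\bigl(y_N^*>\lambda\bigr)$ for all $\lambda,\delta>0$. Integrating against $\qc\lambda^{\qc-1}\,d\lambda$ and using $\prob(y_N^*>\beta\lambda)\le\prob(y_N^*>\beta\lambda,\,S_N\le\delta\lambda)+\prob(S_N>\delta\lambda)$ gives
\[
\esp[(y_N^*)^\qc]\le\beta^\qc c_\beta\delta^2\,\esp[(y_N^*)^\qc]+\beta^\qc\delta^{-\qc}\,\esp[S_N^\qc];
\]
since $\esp[(y_N^*)^\qc]<\infty$ by the first paragraph, choosing $\delta$ small enough that $\beta^\qc c_\beta\delta^2\le\tfrac12$ absorbs the first term and gives $\esp[(y_N^*)^\qc]\le2\beta^\qc\delta^{-\qc}\,\esp[S_N^\qc]$, as required.

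I expect the range $1\le\qc<2$ — in particular the endpoint $\qc=1$ (Davis's inequality) — to be the main obstacle: there is no pointwise convexity identity to exploit, so one must genuinely establish the distributional good-$\lambda$ comparison, which rests on a delicate choice of stopping times together with the a priori integrability bookkeeping recorded in the first paragraph. For the applications of this paper only $\qc\ge2$ is needed, where the elementary argument above suffices; the general statement is classical, and we refer to \cite{burk,marinelli}.
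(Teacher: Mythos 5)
Your proof sketch is essentially correct, but note that the paper does not prove this statement at all: Theorem \ref{t1} is quoted as a classical result with a pointer to \cite{burk,marinelli}, so there is no in-paper argument to compare against. What you supply is the standard textbook route, and it holds up. The left inequality and the $\qc=2$ case (orthogonality of increments plus Doob) are fine; for $\qc>2$ your second-order Taylor bound on $x\mapsto\Vert x\Vert^\qc$, the conditional-expectation cancellation of the linear term, H\"older with exponents $\tfrac{\qc}{\qc-2},\tfrac{\qc}{2}$, Doob, and the absorption $a^\qc\le Aa^{\qc-2}b^2+Bb^\qc\Rightarrow a\le Cb$ are all legitimate, and you correctly flag that the a priori finiteness of $\Lqcnorm{y_N^*}$ (via $y_N^*\le\sqrt N\,S_N$) is what licenses both the cancellation step and the absorption. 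For $1\le\qc<2$ the good-$\lambda$ comparison is indeed the right tool; the one detail your sketch elides is that the constant in $\prob(y_N^*>\beta\lambda,\,S_N\le\delta\lambda)\le c_\beta\delta^2\prob(y_N^*>\lambda)$ must be uniform in $\delta$ for the absorption to close — this is handled as usual by fixing $\beta$ (say $\beta=2$) and restricting $\delta$ so that $\beta-1-2\delta$ stays bounded below, and by observing that on $\{S_N\le\delta\lambda\}$ every single increment is automatically $\le\delta\lambda$, so no separate control of the largest jump is needed. You are also right that the paper only ever invokes the inequality for $\qc\ge2$ (in \eqref{BDG}, in Lemma \ref{lema:indsums}, and in Proposition \ref{prop:telescopeandmartingale}(i) with $q=p/2\ge2$), where your elementary argument is fully self-contained; the sub-$2$ range, including Davis's $\qc=1$ endpoint, is genuinely harder and is exactly why the paper defers to the references.
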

For $\qc\ge2$, we will use the rightmost inequality in \eqref{ee4} in the following simpler form, 
which follows from applying Minkowski's inequality ($\qc/2\ge1$):
\begin{equation}\label{BDG}
\Lqcnorm{\Vert y_N\Vert}\le\Lqcnorm{\sup_{i\leq N}\Vert y_i\Vert}\leq C_\qc\,\sqrt{\sum_{k=1}^N\,\Lqcnorm{\Vert y_i-y_{i-1}\Vert}^2}.
\end{equation} 

\section{An extragradient method with stepsizes bounded away from zero}\label{s3}

\subsection{Statement of the algorithm}\label{ss3.1}

Our extragradient method takes the form:

\quad

\begin{algo}[Stochastic extragradient method]
\quad

\begin{enumerate}
\item{\bf Initialization:} Choose the initial iterate $x^0\in\mathbb{R}^n$, 
a positive stepsize sequence $\{\alpha_k\}$, 
the sample rate $\{N_k\}$ and initial samples $\{\xi^0_{j}\}_{j=1}^{N_{0}}$ 
and $\{\eta^0_{j}\}_{j=1}^{N_{0}}$ of the random variable $\xi$.
\item{\bf Iterative step:} Given iterate $x^k$, generate 
samples $\{\xi^k_{j}\}_{j=1}^{N_{k}}$ and $\{\eta^k_{j}\}_{j=1}^{N_{k}}$ of $\xi$ and define:
\begin{eqnarray}
z^k&=&\Pi\Bigg[x^k-\frac{\alpha_{k}}{N_{k}}\sum_{j=1}^{N_{k}}F(\xi^k_{j},x^k)\Bigg],
\label{algorithm.extra.cte1.m1}\\
x^{k+1}&=&\Pi\Bigg[x^k-\frac{\alpha_{k}}{N_{k}}\sum_{j=1}^{N_{k}}F(\eta^k_{j},z^k)\Bigg].
\label{algorithm.extra.cte2.m1}
\end{eqnarray}
\end{enumerate}
\end{algo}
In  
\eqref{algorithm.extra.cte1.m1} and
\eqref{algorithm.extra.cte2.m1},
$\Pi$ is the Euclidean projection operator onto $X$. Method \eqref{algorithm.extra.cte1.m1}-\eqref{algorithm.extra.cte2.m1} 
is designed so that at iteration $k$ the random variable 
$\xi$ is sampled $2N_k$ times and the empirical average of $F$ at $x$ is used as the approximation of 
$T(x)$ at each projection step.

In order to incorporate the distributed case mentioned in Section \ref{ss1.1}, item(v), we will also analyze the case in 
which the SVI has a Cartesian structure. We consider the decomposition $\re^n=\prod_{i=1}^m\re^{n_i}$, 
with  $n=\sum_{i=1}^mn_i$, and furnish this 
space with the direct inner product $\langle x,y\rangle:=\sum_{i=1}^m\langle x_i,y_i\rangle$ for $x=(x_i)_{i=1}^m$ i
and $y=(y_i)_{i=1}^m$. We suppose that the feasible set has the form
$
X=\prod_{i=1}^mX^i,
$
where $X^i\subset\re^{n_i}$ is a closed and convex set for $i\in[m]$. The random operator $F:\Xi\times\re^n\rightarrow\re^n$ has the form 
$
F=(F_1,\ldots,F_m),
$
where $F_i:\Xi\times\re^n\rightarrow\re^{n_i}$ for $i\in[m]$. 
Given $i\in[m]$, we denote by $\Pi_i:\re^{n_i}\rightarrow\re^{n_i}$ the orthogonal projection onto $X^i$. 
We emphasize that the orthogonal projection under a Cartesian structure has a simple form: 
for $x=(x_i)_{i=1}^m\in\re^n$, we have
$
\Pi_X(x)=(\Pi_{X^1}(x_1),\ldots,\Pi_{X^m}(x_m)).
$

In such a setting, the method takes the form:

\quad

\begin{algo}[Stochastic extragradient method: distributed case]\label{algorithm:2}
\quad

\begin{enumerate}
\item{\bf Initialization:} Choose the initial iterate $x^0\in\mathbb{R}^n$, 
the stepsize sequence $\alpha_k>0$, 
the sample rates $N_k=(N_{k,i})_{i=1}^m\in\mathbb{N}^m$ and, for each $i\in[m]$, generate the initial samples $\{\xi^0_{j,i}\}_{j=1}^{N_{0,i}}$ 
and $\{\eta^0_{j,i}\}_{j=1}^{N_{0,i}}$ of the random variable $\xi$.
\item{\bf Iterative step:} Given $x^k=(x^k_i)_{i=1}^m$, for each $i\in[m]$, generate samples $\{\xi^k_{j,i}\}_{j=1}^{N_{k,i}}$ and $\{\eta^k_{j,i}\}_{j=1}^{N_{k,i}}$ of $\xi$ and define:
\begin{eqnarray}
z^k_i&=&\Pi_i\Bigg[x^k_i-\frac{\alpha_{k}}{N_{k,i}}\sum_{j=1}^{N_{k,i}}F_i(\xi^k_{j,i},x^k)\Bigg],\label{algorithm.extra.cte1}\\
x^{k+1}_i&=&\Pi_i\Bigg[x^k_i-\frac{\alpha_{k}}{N_{k,i}}\sum_{j=1}^{N_{k,i}}F_i(\eta^k_{j,i},z^k)\Bigg].\label{algorithm.extra.cte2}
\end{eqnarray}
\end{enumerate}
\end{algo}

Method \eqref{algorithm.extra.cte1.m1}-\eqref{algorithm.extra.cte2.m1} is a particular case of method 
\eqref{algorithm.extra.cte1}-\eqref{algorithm.extra.cte2} with $m=1$. The only additional requirement when $m>1$ 
is the sampling coordination between agents (Assumption \ref{extra.cte.unbias}). We  define next the stochastic errors: for each $i\in[m]$, 
\begin{eqnarray}
\epsilon^k_{1,i} &:=&\frac{1}{N_{k,i}}\sum_{j=1}^{N_{k,i}}F_i(\xi^k_{j,i},x^k)-T_i(x^k),\label{extra.cte.noise.sample1}\\
\epsilon^k_{2,i} &:=&\frac{1}{N_{k,i}}\sum_{j=1}^{N_{k,i}}F_i(\eta^k_{j,i},z^k)-T_i(z^k),\label{extra.cte.noise.sample2}
\end{eqnarray}
in which case method (\ref{algorithm.extra.cte1})-(\ref{algorithm.extra.cte2}) 
is expressible in a compact form as:
\begin{eqnarray}
z^k&=&\Pi[x^k-\alpha_k(T(x^k)+\epsilon^k_1)],\label{algorithm.extra.cte1.F}\\
x^{k+1}&=&\Pi[x^k-\alpha_k(T(z^k)+\epsilon^k_2)]\label{algorithm.extra.cte2.F},
\end{eqnarray}
where $\Pi:\re^n\rightarrow\re^n$ is the projection operator onto $X$ and
$\epsilon^k_l:=(\epsilon^k_{l,i})_{i=1}^m$ for $l\in\{1,2\}$.

\subsection{Discussion of the assumptions}\label{ss3.2}
For simplicity of notation, we aggregate the samples as
\begin{eqnarray*}
\xi^k_i:=\{\xi^k_{j,i}:j\in[N_{k,i}]\},\mbox{  }\xi^k:=\{\xi^k_i:i\in[m]\},\\
\eta^k_i:=\{\eta^k_{j,i}:j\in[N_{k,i}]\},\mbox{  } \eta^k:=\{\eta^k_i:i\in[m]\}.
\end{eqnarray*}
In the method \eqref{algorithm.extra.cte1.F}-\eqref{algorithm.extra.cte2.F}, the sample $\{\xi^k\}$ is used 
in the first projection while $\{\eta^k\}$ is used in the second projection. In the case of a Cartesian SVI, 
$\{\xi^k_i\}$ and $\{\eta^k_i\}$ are the samples used in the first and second projections in 
\eqref{algorithm.extra.cte1}-\eqref{algorithm.extra.cte2} by the $i$-th agent respectively. 

We shall study the stochastic process $\{x^k\}$ with respect to the filtrations
$$
\alg_k=\sigma(x^0,\xi^0,\ldots,\xi^{k-1},
\eta^0,\ldots,\eta^{k-1}),\quad
\widehat\alg_k=\sigma(x^0,\xi^0,\ldots,\xi^k,
\eta^0,\ldots,\eta^{k-1}).
$$
We observe that by induction, $x^k\in\alg_k$ and $z^k\in\widehat\alg_k$ but $z^k\notin\alg_k$.
The filtration $\alg_k$ corresponds to the information carried until iteration $k$, 
to be used on the computation of iteration $k+1$. The filtration $\widehat\alg_k$ corresponds to the information carried until iteration $k$ plus the information produced at the first projection step of iteration $k+1$, namely, $\widehat\alg_k=\sigma(\alg_k\cup\sigma(\xi^{k}))$. The way information evolves according to filtrations $\{\alg_k,\widehat\alg_k\}$ is natural in applications. Also, the use of two filtrations will be important since even though $z^k\notin\alg_k$ we have $z^k\in\widehat\alg_k$, so that, given $i\in[m]$:
\begin{eqnarray}
\esp[\epsilon^k_{2,i}|\widehat\alg_k]&=
&\esp\Bigg[\frac{1}{N_{k,i}}\sum_{j=1}^{N_{k,i}}F_i(\eta_{j,i}^k,z^k)-T_i(z^k)\Bigg|\widehat\alg_k\Bigg]\nonumber\\
&=&\frac{1}{N_{k,i}}\sum_{j=1}^{N_{k,i}}\esp\big[F_i(\eta_{j,i}^k,z^k)\big|\widehat\alg_k\big]-T_i(z^k)\nonumber\\
&=&\frac{1}{N_{k,i}}\sum_{j=1}^{N_{k,i}}T_i(z^k)-T_i(z^k)=0,\label{mean.error}
\end{eqnarray}
if for every $i\in[m]$, $\{\eta^k_{j,i}:j\in[N_{k,i}]\}$ is independent of $\widehat\alg_k$ and 
identically distributed as $\xi$. We exploit \eqref{mean.error} for avoiding first order moments of the stochastic errors, which drastically diminishes the complexity by an order of one, and for using martingale techniques.\footnote{If also $\{\xi^k_{j,i}:j\in[N_{k,i}]\}$ is independent of $\alg_k$ and identically distributed as $\xi$, then, for $i\in[m]$,
$
\var[\epsilon^k_{1,i}]=N_{k,i}^{-1}\var[F_i(\xi,x^k)]
$
and 
$
\var[\epsilon^k_{2,i}]=N_{k,i}^{-1}\var[F_i(\xi,z^k)],
$
so that our method iteratively reduces the variance of the oracle error as long as $\{N_{k,i}\}_{k\in\mathbb{N}}$ increases.
} 
We remark that, with some minor extra effort, the same samples can be used in both projections in method \eqref{algorithm.extra.cte1}-\eqref{algorithm.extra.cte2}.
Next we describe the assumptions required in our convergence analysis. 

\begin{assump}[Consistency]\label{existence}
The solution set $X^*:=\mbox{\emph{S}}(T,X)$ is non-empty.
\end{assump}

\begin{assump}[Stochastic model]\label{boundedness}
$X\subset\re^n$ is closed and convex, $(\Xi,\mathcal{G})$ is a measurable space such 
that $F:\Xi\times X\rightarrow\re^n$ is a Carath\'eodory map,\footnote{That is, $F(\xi,\cdot):X\rightarrow\re^n$ is continuous for a.e. $\xi\in\Xi$ and $F(\cdot,x):\Xi\rightarrow\re^n$ is measurable.
}
 $\xi:\Omega\rightarrow\Xi$ is a random variable 
defined on a probability space $(\Omega,\alg,\prob)$ and $\esp[\Vert F(\xi,x)\Vert]<\infty$ for all $x\in X$.
\end{assump}

\begin{assump}[Lipschitz continuity]\label{extra.cte.lipschitz}
The mean operator $T:X\rightarrow\re^n$ defined by \eqref{expected}
is Lipschitz continuous with modulus $L>0$. 
\end{assump}

\begin{assump}[Pseudo-monotonicity]\label{extra.cte.monotonicity}
The mean operator $T:\re^n\rightarrow\re^n$ is pseudo-monotone,\footnote{Pseudo-monotonicity is a weaker assumption than monotonicity, i.e., $\langle T(z)-T(x),z-x\rangle\ge 0$ for all $x,z\in\re^n$.} i.e., 
$
\langle T(x),z-x\rangle\ge 0\Longrightarrow\langle T(z),z-x\rangle\ge 0
$
for all $z,x\in\re^n$.
\end{assump}
 
\begin{assump}[Sampling rate]\label{extra.cte.samples}
Given $\{N_k\}$ as in Algorithm \ref{algorithm:2}, define $N_{k,\min}:=\min_{i\in[m]}N_{k,i}$ and
$
\frac{1}{\mathcal{N}_k}:=\sum_{i=1}^m\frac{1}{N_{k,i}}
$. Then one of the conditions is satisfied:
\begin{itemize}
\item[i)] $\sum_{k=0}^\infty\frac{1}{\mathcal{N}_k}<\infty$,
\item[ii)] $\sum_{k=0}^\infty\frac{1}{N_{k,\min}}<\infty$.
\end{itemize}
\end{assump}

Typically a sufficient choice is, for $i\in[m]$:
$$
N_{k,i}=\Theta_{i}\cdot \left(k+\mu_i\right)^{1+a_i}\cdot\left(\ln \big(k+\mu_i\big)\right)^{1+b_i},
$$
for any $\Theta_i>0$, $\mu_i>0$ with $a_i>0$, $b_i\ge-1$ or $a_i=0$, $b_i>0$ (the latter is the minimum requirement). 
It is essential to specify choices of the above parameters that induce a practical complexity of 
method \eqref{algorithm.extra.cte1}-\eqref{algorithm.extra.cte2}, i.e., practical upper bounds on the total oracle complexity
$
\sum_{k=1}^K\sum_{i=1}^m2N_{k,i},
$
where $K$ is an estimate of the total number of iterations needed for achieving a given specified tolerance $\epsilon>0$. 
A convergence rate in terms of $K$ is also desirable. As commented after Theorem \ref{thm:convergence:rate}, 
our algorithm achieves an optimal accelerated rate $\mathcal{O}(1/K)$ and an optimal complexity $\mathcal{O}(\epsilon^{-2})$ 
up to a first order logarithmic term $\ln(\epsilon^{-1})$.\footnote{In large scale problems such as in machine learning, the 
dependence of the rate and complexity estimates on the dimension is relevant in the case of large constraint dimension ($n_i\gg1$) or 
large networks ($m\gg1$). We show that our method has complexity $\mathcal{O}(\sigma^2)$ which is independent of dimension, 
where $\sigma^2$ is the variance, even in the case of an unbounded feasible set and a non-uniform variance. Sharper constants are 
available in the case of uniform variance (see Proposition \ref{pp3}).
}

We offer two options of sampling coordination among the agents:

\begin{assump}[Sampling coordination]\label{extra.cte.unbias}
For each $i\in[m]$ and $k\in\mathbb{N}_0$, $\{\xi^k_i\}$ and $\{\eta^k_i\}$ are i.i.d. samples of $\xi$ 
such that $\{\xi^k_i\}$ and $\{\eta^k_i\}$ are independent of each other. Also, one of the two next coordination conditions is satisfied:
\begin{itemize}
\item[i)] (Centralized sampling) For all $i\in[m]$, $N_{k,i}\equiv N_k$, $\xi^k_i\equiv\xi^k$ and $\eta^k_i\equiv\eta^k$.

\item[ii)] (Distributed sampling) $\{\xi^k,\eta^k:k\in\mathbb{N}\}$ is an i.i.d. sample of $\xi$.
\end{itemize}
\end{assump}
We remark that, with some extra effort, it is possible to use the same samples in each projection step of 
the method \eqref{algorithm.extra.cte1}-\eqref{algorithm.extra.cte2}, that is, $\xi^k_i\equiv\eta^k_i$ 
for $k\in\mathbb{N}_0$ and $i\in[m]$. We ask for independence in Assumption \ref{extra.cte.unbias} in order to 
simplify the analysis. Both conditions (i) and (ii) in Assumption \ref{extra.cte.unbias} are the same for $m=1$.\footnote{When  
$m>1$, item (i) corresponds to the case where one stochastic 
oracle is centralized. In this case, less samples are required but the sampling process needs total coordination. 
Item (ii) corresponds to the other extreme case, where the agents have completely distributed oracles so that the 
sampling process of each agent is conducted independently. We do not explore the intermediate possibilities between (i) and (ii).
}
Assumption \ref{extra.cte.unbias} implies in particular that $\{\xi^k\}$ is independent of $\alg_k$, 
$\{\eta^k\}$ is independent of $\widehat\alg_k$ and both are identically distributed as $\xi$. 
In particular, for any $x\in\re^n$, $k\in\mathbb{N}$, $i\in[m]$, $j\in[N_{k,i}]$:
$$
\esp\left[F_i(\xi^k_{j,i},x)\Big|\alg_k\right]=\esp\left[F_i(\eta^k_{j,i},x)\Big|\widehat\alg_k\right]=T_i(x).
$$

\begin{assump}[Stepsize bounded away from zero]\label{extra.cte.step} The stepsize sequence $\{\alpha_k\}$ 
in Algorithm \ref{algorithm:2} satisfies
$$
0<\inf_{k\in\mathbb{N}}\alpha_{k}\le
\hat\alpha:=\sup_{k\in\mathbb{N}}\alpha_{k}<\frac{1}{\sqrt{6}L}. 
$$
\end{assump}

The following two sets of assumptions ensure that the variance of the error $F(\xi,x)-T(x)$ is controlled, 
so that (together with Assumption \ref{extra.cte.samples} on the sampling rate) boundedness is guaranteed, 
even in the case of an unbounded operator.

\begin{assump}[Variance control]\label{extra.cte.control}
There exists $p\ge2$, such that one of the following three conditions holds:
\begin{itemize}
\item[i)] There exist $x^*\in X^*$ and $\sigma(x^*)>0$ such that for all $x\in X$,
$$
\Lpnorm{\Vert F(\xi,x)-T(x)\Vert}\leq \sigma(x^*)\,(1+\|x-x^*\|).
$$

\item[ii)] There exists a locally bounded and measurable function $\sigma:X^*\rightarrow\re_+$ such that for all $x^*\in X^*$, $x\in X$, the inequality in (i) is satisfied.

\item[iii)] There exist positive sequence $\{\sigma_{l,i}:i\in[m],l\in[n_i]\}$ such that for all $i\in[m]$, $l\in[n_i]$, $x\in X$, 
$
\Lpnorm{F_{\ell,i}(\xi,x)-T_{\ell,i}(x)}\leq \sigma_{\ell,i},
$
where $F_{\ell,i}$ and $T_{\ell,i}$ are the components of $F_i$ and $T_i$ respectively.
\end{itemize}
\end{assump}

In item (iii) we define $\sigma^2:=\sum_{i=1}^m\sum_{\ell=1}^{n_i}\sigma_{\ell,i}^2.$ Note that when $p=2$, $\sigma(x^*)^2\,(1+\|x-x^*\|)^2$ in the case of (i)-(ii), and $\sigma^2$ in the case of item (iii), are, respectively, upper bounds on the variance of the components of $F(\xi,x)$. 
Items (i) and (ii) are essentially the same, excepting that (i) only requires the condition to hold at just 
one point $x^*\in X^*$ rather than on the entire solution set. Item (i) is sufficient for the analysis, 
but (ii) allows for sharper estimates in the case of unbounded feasible set and operator. 
Item (iii) allows for even sharper ones. In the sequel we shall denote $q:=p/2$.

For the important case in which the random operator $F$ is Lipschitz, 
both items (i)-(ii) are satisfied with a continuous $\sigma:X^*\rightarrow\re_+$. Namely, if for any $x,y\in\re^n$,
\begin{equation}\label{equation:random:lipschitz}
\Vert F(\xi,x)-F(\xi,y)\Vert\le \mathsf{L}(\xi)\Vert x-y\Vert,
\end{equation}
for some measurable $\mathsf{L}:\Xi\rightarrow\re_+$ with finite $L_p$-norm for some $p\ge2$, 
then Assumptions \ref{boundedness}-\ref{extra.cte.lipschitz} and \ref{extra.cte.control} hold with $L:=\esp[\mathsf{L}(\xi)]$ and
\begin{equation}\label{equation:variance}
\sigma(x^*):=\max\{\Lpnorm{\Vert F(\xi,x^*)-T(x^*)\Vert},\Lpnorm{\mathsf{L}(\xi)}+L\},
\end{equation}
for $x^*\in X^*$. Indeed, Assumption \ref{extra.cte.lipschitz} with $L:=\esp[\mathsf{L}(\xi)]$ 
follows from Jensen's inequality and \eqref{equation:random:lipschitz}. For establishing \eqref{equation:variance}, 
note that by Minkowski's inequality
\begin{eqnarray*}
\Lpnorm{\Vert F(\xi,x)-T(x)\Vert}&\le &\Lpnorm{\Vert F(\xi,x)-F(\xi,x^*)\Vert}+\Lpnorm{\Vert F(\xi,x^*)-T(x^*)\Vert}+\Vert T(x)-T(x^*)\Vert\\
&\le &
(\Lpnorm{\textsf{L}(\xi)}+L)\Vert x-x^*\Vert+\Lpnorm{\Vert F(\xi,x^*)-T(x^*)\Vert},\\
&\le &\sigma(x^*)(\Vert x-x^*\Vert+1),
\end{eqnarray*}
using \eqref{equation:random:lipschitz} and the fact that $T$ is $L$-Lipschitz continuous in the second inequality, 
and \eqref{equation:variance} in the third inequality. Thus, Assumption \ref{extra.cte.control}(i)-(ii) is 
merely a \emph{finite} variance assumption even for the case of an unbounded feasible set. 
Assumption \ref{extra.cte.control}(iii) means that the variance is \emph{uniformly bounded} over 
the feasible set $X$. It has been assumed in most of the past literature \cite{nem, Uday, uday0, lan, uday2, uday3, uday4, uday5} on 
stochastic approximation algorithms for SVI and stochastic programming.\footnote{Assumption \ref{extra.cte.control}(iii) has been weakened 
in previous works only in situations in which the operator satisfies more demanding
monotonicity conditions (strongly monotone operator in \cite{wang} and weak-sharp monotone operator in \cite{philip}) or when
the operator is merely monotone, but with additional Tykhonov regularization (as in \cite{philip}, without 
convergence rate results).
} 
Assumptions \ref{extra.cte.control}(i)-(ii) are much weaker than Assumption 
\ref{extra.cte.control}(iii) and, to the best of our knowledge, seem to be new for monotone operators or convex functions without regularization.

The next examples provide instances where Assumption \ref{extra.cte.control}(i)-(ii) and the iterative variance reduction in method \eqref{algorithm.extra.cte1}-\eqref{algorithm.extra.cte2} are relevant or even \emph{necessary} for the asymptotic convergence of the generated sequence, in the case of an unbounded feasible set (e.g., stochastic equations and stochastic complementarity problems).

\quad

\begin{example}[Linear SVI with unbounded feasible set]\label{example2}
\emph{
The following example is a typical situation of a non-uniform variance over a unbounded feasible set. 
It includes the cases of stochastic linear equations and complementarity problem. Let the random operator be: 
$$
F(\xi,x)=A(\xi)x,
$$
for all $x\in\re^n$,
where $A(\xi)$ is a random matrix whose entries have finite mean and variance, 
such that $\bar A:=\esp[A(\xi)]$ is nonnull and positive semidefinite. In this case, 
$T(x)=\bar A x$ ($x\in\re^n$) is monotone and linear. For all $x\in\re^n$,
$
\var[F(\xi,x)]=x^t B x,
$
where $B:=\sum_{i=1}^m\covar\left[A_i(\xi)\right]$ is positive semidefinite 
and $A_1(\xi),\ldots,A_m(\xi)$ are the rows of $A(\xi)$. We denote by $N(B)$ the kernel of $B$ and by $N(B)^{\perp}$ its orthogonal complement. Given $x\in\re^n$, let $x_B$ be the orthogonal projection of $x$ onto $N(B)^\perp$.
Then for all $x\in\re^n$ we have
$$
\var[F(\xi,x)]\ge\lambda_+(B)\Vert x_B\Vert^2,
$$
where $\lambda_+(B)$ is the smallest nonnull eigenvalue of $B$. In particular, if $B$ is positive definite, then for all $x\in\re^n$, 
$
\var[F(\xi,x)]\ge\lambda_{\min}(B)\Vert x\Vert^2,
$
where $\lambda_{\min}(B)$ is the smallest eigenvalue of $B$.
This shows that Assumption \ref{extra.cte.control}(iii) does not hold if $X$ is unbounded (in fact, the variance grows quadratically in the infinite horizon).
}
\end{example}

\quad

\begin{example}[Equation problem for zero mean random constant operator]\label{example1}
\emph{The following example presents a simple situation where, in the case of an unbounded feasible set and an oracle with \emph{uniform variance}, the method in \cite{lan} may possess an undesirable property: for the null operator $T\equiv0$, the method generates a sequence whose final ergodic average converges but the sequence itself a.s. diverges to $\infty$.} 

\emph{
The method in \cite{lan} say that given a prescribed number of iterations $K$, for $k\in[K]$ compute
\begin{eqnarray*}
z^k&=&\Pi\left[x^k-\alpha_k^K F(\xi^K_k,x^k)\right],\\
x^{k+1}&=&\Pi\left[x^k-\alpha_k^K F(\eta^K_k,z^k)\right],
\end{eqnarray*}
and give as final output the ergodic average
$
\bar z^K=\sum_{k=1}^K p_k^Kz^k, 
$
where $\{p^K_k\}$ is a positive sequence of weights such that $\sum_{k=1}p^K_k=1$ and $\{\alpha^K_k\}$ is a sequence of positive stepsizes. For an unbounded $X$, assuming uniformly bounded variance 
(Assumption \ref{extra.cte.control}(iii)) and a single oracle call per iteration, it is shown that there exists $\{v^K\}\subset\re^n$ such that $\esp[\tilde G(\bar z^K,v^K)]\lesssim1/\sqrt{K}$ and $\esp[\Vert v^K\Vert]\lesssim\sqrt{K}$ (see \cite{lan}, Corollary 3.4). In these statements, for $z,v\in\re^n$,
$
\tilde G(z,v)=\sup_{y\in X}\langle T(y)-v,z-y\rangle,
$
as mentioned in Subsection \ref{ss1.1}. The following example shows that  $\limsup_{K\rightarrow\infty} \Vert z^K\Vert=\infty$ with total probability.
}
\emph{We shall consider $n=1$, but one can easily generalize the argument for any $n>1$. Consider $X=\re$ and the random operator given by
$$
F(\xi,x)=\xi,
$$
for all $x\in\re$, where $\xi$ is a random variable with zero mean, finite variance $\sigma^2$ and finite  third moment 
(one could generalize the argument assuming finite $q$-moment for any $q>2$). 
In this case, trivially $T\equiv0$, $X^*=\re$ and Assumption \ref{extra.cte.control}(iii) holds. It is easy to check that the mirror-prox method in \cite{lan} gives, 
after $K$ iterations, for $k\in[K]$: 
\begin{equation}\label{example:z}
z^k=x^1-\sum_{i=1}^{k}\alpha^K_i\xi^K_i,
\quad
\bar z^K=\sum_{k=1}^Kp^K_kz^k,
\end{equation}
where $p^K_k=c_0\Gamma_K\alpha^K_k$, $\gamma_k(\Gamma_k\alpha_k^K)^{-1}\equiv c_0$ is a constant, 
$\gamma_k:=2(1+k)^{-1}$, $\{\Gamma_k\}$ is defined recursively as $\Gamma_1:=1$, $\Gamma_k:=(1-\gamma_k)\Gamma_{k-1}$ 
and the stepsize is 
$$
\alpha^K_k:=\frac{k}{3LK+\sigma K\sqrt{K-1}},
$$
(see \cite{lan}, Corollary 3.4). Using the expression of $\{p^K_k\}$ and $\sum_{k=1}^Kp^K_k=1$, we get
\begin{equation}\label{example:bar:z}	
\bar z^K=x^1-\sum_{k=1}^K\theta^K_k\cdot\xi^K_k,
\end{equation}
where $\theta^K_k:=c_0\Gamma_K\alpha^K_k\sum_{i=k}^K\alpha^K_i$. Note that $\Gamma_k=\frac{2}{k(k+1)}$ and
$$
\theta^K_k=\frac{c_0\Gamma_K k}{\left(3LK+\sigma K\sqrt{K-1}\right)^2}\sum_{i=k}^Ki
=\frac{c_0 k(K-k+1)(K+k)}{K(K+1)\left(3LK+\sigma K\sqrt{K-1}\right)^2}.
$$	
We have the following estimates:
$$
\bar s_K^2:=\sum_{k=1}^K\left(\theta^K_k\right)^2\sim K^{-3},\quad
s_K^2:=\sum_{k=1}^K\left(\alpha^K_k\right)^2\sim 1,\quad
\sum_{k=1}^K\left(\alpha^K_k\right)^3\sim K^{-\frac{1}{2}}\quad(\mbox{as }K\rightarrow\infty).
$$
}

\emph{
We will now invoke Lyapounov's criteria (\cite{bill}, Theorem 7.3) with $\delta=1$ for the sum $\sum_{k=1}^K\alpha^K_k\cdot\xi^K_k$ 
of independent random variables, obtaining 
$$
\lim_{K\rightarrow\infty}\frac{\esp\left[\left|\xi\right|^{3}\right]}{s_K^3}\sum_{k=1}^K\left(\alpha^K_k\right)^{3}=
\lim_{K\rightarrow\infty}\esp\left[\left|\xi\right|^{3}\right]K^{-\frac{1}{2}}=0.
$$
Hence $\left(\sigma s_K\right)^{-1}\sum_{k=1}^K\alpha^K_k\xi^K_k$ converges in distribution to $N(0,1)$. 
Therefore, there exists some constant $C>0$ such that for any $R>0$, 
\begin{eqnarray}
\prob\left(\limsup_{K\rightarrow\infty}z^K\ge R\right)&=&
\prob\left(\limsup_{K\rightarrow\infty}\sum_{k=1}^K\frac{\alpha^K_k}{\sigma s_K}\cdot\xi^K_k\ge CR\right)\nonumber\\
&\ge & \limsup_{K\rightarrow\infty}\prob\left(\sum_{k=1}^K\frac{\alpha^K_k}{\sigma s_K}\cdot\xi^K_k\ge CR\right)>0,\label{e40}
\end{eqnarray}
using \eqref{example:z} and $s_K\sim 1$ in the equality and Portmanteau's Theorem (\cite{durrett}, Theorem 3.2.5) 
in the inequality. For every $R>0$, the event $A_R:=[\limsup_{K\rightarrow\infty}z^K\ge R]$ is a tail event with positive probability and, hence, has total probability, invoking Kolmogorov's zero-one law (\cite{durrett}, Theorem 2.5.1). We conclude from \eqref{e40} that
$$
\prob\left(\limsup_{K\rightarrow\infty} z^K=\infty\right)=\lim_{R\rightarrow\infty}\prob\left(A_R\right)=1.
$$
This shows that $\{z^K\}$ diverges with total probability. From the 1-Series Theorem (\cite{durrett}, Theorem 2.5.3), $\sum_{K=1}^\infty\bar s_K^2<\infty$ and \eqref{example:bar:z}, we have that a.s. $\{\bar z^K\}$ converges. 
}
\end{example}

\subsection{Convergence Analysis}\label{ss3.3}

For any $x=(x_i)_{i=1}^m\in\re^{n}$ and $\alpha>0$, we denote the (quadratic) natural residual function by
$$
r_\alpha(x)^2:=\left\Vert x-\Pi\left[x-\alpha T(x)\right]\right\Vert^2
=\sum_{i=1}^m\left\Vert x_i-\Pi_i\left[x_i-\alpha T_i(x)\right]\right\Vert^2.
$$
We start with two key lemmas whose proofs are given in the Appendix. Define 
\begin{equation}\label{def:rho}
\rho_k:=1-6L^2\alpha_{k}^2,
\end{equation}
for any $k\in\mathbb{N}_0$. We define recursively, for $k\in\mathbb{N}_0$, $A_0:=0$,
\begin{equation}\label{def:A}
A_{k+1}:= A_{k}+(8+\rho_k)\alpha_{k}^2\Vert\epsilon^k_1\Vert^2+
8\alpha_{k}^2\Vert\epsilon^k_2\Vert^2,
\end{equation}
and, for $x^*\in X^*$, $M_0(x^*):=0$, 
\begin{equation}\label{def:M}
M_{k+1}(x^*):= M_{k}(x^*)+2\langle x^*-z^k,\alpha_k\epsilon^k_2\rangle.
\end{equation}

\quad

\begin{lema}[Recursive relation]\label{extra.cte.korpelevich}
Suppose that Assumption \ref{existence}, \ref{extra.cte.lipschitz} and \ref{extra.cte.step} hold. Then, almost surely, for all  $k\in\mathbb{N}$ and $x^*\in X^*$,
\begin{eqnarray*}
\Vert x^{k+1}-x^*\Vert^2\le \Vert x^k-x^*\Vert^2-\frac{\rho_k}{2}r_{\alpha_k}(x^k)^2+M_{k+1}(x^*)-M_k(x^*)+A_{k+1}-A_k.
\end{eqnarray*}
\end{lema}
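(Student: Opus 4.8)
The plan is to run Korpelevich's extragradient estimate on the two projections \eqref{algorithm.extra.cte1.F}--\eqref{algorithm.extra.cte2.F}, keeping the errors $\epsilon^k_1,\epsilon^k_2$ explicit rather than conditioning on them (the asserted inequality is a pathwise relation, so ``a.s.'' is automatic once it holds for every realization of the samples). First I would apply the variational characterization of the projection, Lemma \ref{proj}(i), twice: to the outer step $x^{k+1}=\Pi[x^k-\alpha_k(T(z^k)+\epsilon^k_2)]$ at the test point $x^*\in X^*\subset X$, and to the inner step $z^k=\Pi[x^k-\alpha_k(T(x^k)+\epsilon^k_1)]$ at the test point $x^{k+1}\in X$. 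Combining these two variational inequalities with the polarization identity
\[
\Vert x^{k+1}-x^*\Vert^2=\Vert x^k-x^*\Vert^2-\Vert x^k-x^{k+1}\Vert^2+2\inner{x^{k+1}-x^k}{x^{k+1}-x^*},
\]
splitting $x^*-x^{k+1}=(x^*-z^k)+(z^k-x^{k+1})$ in the last inner product, and using $2\inner{z^k-x^k}{x^{k+1}-z^k}=\Vert x^{k+1}-x^k\Vert^2-\Vert z^k-x^k\Vert^2-\Vert x^{k+1}-z^k\Vert^2$ (which cancels $\Vert x^k-x^{k+1}\Vert^2$), I would reach an intermediate estimate of the form
\begin{align*}
\Vert x^{k+1}-x^*\Vert^2 &\le \Vert x^k-x^*\Vert^2-\Vert z^k-x^k\Vert^2-\Vert x^{k+1}-z^k\Vert^2-2\alpha_k\inner{T(z^k)}{z^k-x^*}\\
&\quad -2\alpha_k\inner{v^k}{x^{k+1}-z^k}+2\alpha_k\inner{x^*-z^k}{\epsilon^k_2},
\end{align*}
with $v^k:=(T(z^k)-T(x^k))+(\epsilon^k_2-\epsilon^k_1)$; by \eqref{def:M} the last term is exactly $M_{k+1}(x^*)-M_k(x^*)$.

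Next I would eliminate the two surviving inner products. For $-2\alpha_k\inner{T(z^k)}{z^k-x^*}$: since $z^k\in X$ and $x^*$ solves VI$(T,X)$ we have $\inner{T(x^*)}{z^k-x^*}\ge0$, and pseudo-monotonicity of $T$ (Assumption \ref{extra.cte.monotonicity}) then yields $\inner{T(z^k)}{z^k-x^*}\ge0$, so this term is $\le0$ and is dropped; this is the only step that uses a structural property of $T$ beyond Lipschitz continuity. For $-2\alpha_k\inner{v^k}{x^{k+1}-z^k}$, Young's inequality gives the bound $\Vert x^{k+1}-z^k\Vert^2+\alpha_k^2\Vert v^k\Vert^2$, whose first summand cancels the remaining $-\Vert x^{k+1}-z^k\Vert^2$; Lipschitz continuity of $T$ (Assumption \ref{extra.cte.lipschitz}) together with $\Vert a+b\Vert^2\le2\Vert a\Vert^2+2\Vert b\Vert^2$ (used twice) bounds $\alpha_k^2\Vert v^k\Vert^2$ by $2L^2\alpha_k^2\Vert z^k-x^k\Vert^2+4\alpha_k^2\Vert\epsilon^k_1\Vert^2+4\alpha_k^2\Vert\epsilon^k_2\Vert^2$. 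This leaves
\begin{align*}
\Vert x^{k+1}-x^*\Vert^2 &\le \Vert x^k-x^*\Vert^2-(1-2L^2\alpha_k^2)\Vert z^k-x^k\Vert^2\\
&\quad +4\alpha_k^2\Vert\epsilon^k_1\Vert^2+4\alpha_k^2\Vert\epsilon^k_2\Vert^2+M_{k+1}(x^*)-M_k(x^*).
\end{align*}

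Finally I would convert $\Vert z^k-x^k\Vert^2$ into the natural residual. Put $\bar z^k:=\Pi[x^k-\alpha_k T(x^k)]$, so $r_{\alpha_k}(x^k)=\Vert x^k-\bar z^k\Vert$; nonexpansiveness of $\Pi$ (Lemma \ref{proj}(iii)) gives $\Vert z^k-\bar z^k\Vert\le\alpha_k\Vert\epsilon^k_1\Vert$, hence $r_{\alpha_k}(x^k)^2\le2\Vert z^k-x^k\Vert^2+2\alpha_k^2\Vert\epsilon^k_1\Vert^2$. By Assumption \ref{extra.cte.step}, $6L^2\alpha_k^2<1$, so $0<\rho_k\le1-2L^2\alpha_k^2$; multiplying the residual bound by the positive number $1-2L^2\alpha_k^2$ gives $-(1-2L^2\alpha_k^2)\Vert z^k-x^k\Vert^2\le-\tfrac{\rho_k}{2}r_{\alpha_k}(x^k)^2+\alpha_k^2\Vert\epsilon^k_1\Vert^2$. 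Substituting, the error contributions collapse to $5\alpha_k^2\Vert\epsilon^k_1\Vert^2+4\alpha_k^2\Vert\epsilon^k_2\Vert^2$, which is $\le(8+\rho_k)\alpha_k^2\Vert\epsilon^k_1\Vert^2+8\alpha_k^2\Vert\epsilon^k_2\Vert^2=A_{k+1}-A_k$ by \eqref{def:A}, and the claimed recursion follows. I expect the bookkeeping in the first step to be the main obstacle: the two projection inequalities and the polarization identity have to be combined in exactly the order that leaves $-\Vert x^{k+1}-z^k\Vert^2$ and half of $-\Vert z^k-x^k\Vert^2$ available as sinks for the Lipschitz and noise cross terms, and a sign slip or a wasteful Young split there costs either the constants or the sign of the residual coefficient. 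Recovering the precise constants $\rho_k=1-6L^2\alpha_k^2$ of \eqref{def:rho} and $(8+\rho_k),\,8$ of \eqref{def:A} is then only a matter of leaving the slack exhibited above in Young's inequality.
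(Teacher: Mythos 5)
Your argument is correct and follows essentially the same route as the paper's proof: the two projection inequalities combined with the polarization identity, pseudo-monotonicity (Assumption \ref{extra.cte.monotonicity}, which the paper's proof also invokes even though it is not listed among the lemma's stated hypotheses) to discard $\langle T(z^k),z^k-x^*\rangle$, and the nonexpansiveness bound $r_{\alpha_k}(x^k)^2\le 2\Vert x^k-z^k\Vert^2+2\alpha_k^2\Vert\epsilon^k_1\Vert^2$ at the end. The only deviation is in the cross term: you absorb $-2\alpha_k\langle v^k,x^{k+1}-z^k\rangle$ into $-\Vert x^{k+1}-z^k\Vert^2$ via Young's inequality, whereas the paper bounds it by $2\alpha_k^2\Vert v^k\Vert^2$ using Cauchy--Schwarz together with the nonexpansiveness estimate $\Vert x^{k+1}-z^k\Vert\le\alpha_k\Vert v^k\Vert$ and simply drops $-\Vert x^{k+1}-z^k\Vert^2$; your version produces the slightly sharper coefficients $2L^2\alpha_k^2$, $5$, $4$ in place of the paper's $6L^2\alpha_k^2$, $6+\rho_k$, $6$, which you then correctly relax to match \eqref{def:rho} and \eqref{def:A}.
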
	

\begin{lema}[Error decay]\label{lema:indsums}
Consider Assumptions \ref{boundedness}-\ref{extra.cte.control}. 
For each $i\in[m]$ and $N_i\in\mathbb{N}$, let $\xi_i:=\left\{\xi_{j,i}:j\in[N_i]\right\}$ be an i.i.d. 
sample of $\xi$ with $1/\mathcal{N}:=\sum_{i=1}^m1/N_i$ and $N_{\min}:=\min_{i\in[m]}N_i$. For any $x\in X$ set
$$
\epsilon_i(x):=\sum_{j=1}^{N_i}\frac{F_i(\xi_{j,i},x)-T_i(x)}{N_i},\quad
\epsilon(x):=(\epsilon_1(x),\ldots,\epsilon_m(x)).
$$
If Assumption \ref{extra.cte.control}(i) hold for some $x^*\in X^*$,  then 
for all $x\in X, v\in\re^{n}$,
$$
\Lpnorm{\left\Vert\epsilon(x)\right\Vert}\leq \sqrt{\frac{\mathsf{A}}{\mathcal{N}}}C_p f(x,x^*),\quad
\Lpnorm{\langle v,\epsilon(x)\rangle}\leq 
\Vert v\Vert\sqrt{\frac{\mathsf{B}}{\mathcal{N}}}C_p f(x,x^*),
$$
where $f(x,x^*):=\sigma(x^*)(1+\|x-x^*\|)$ and
\begin{enumerate}
\item $\mathsf{A}=1$ if $m=1$ and $\mathsf{A}=2$ if $m>1$,

\item $\mathsf{B}=2$ if $m>1$ and $\{\xi_{j,i}:1\le i\le m,1\le j\le N_i\}$ is i.i.d.,

\item $\mathsf{B}=1$ if $m=1$ or if $m>1$ with $N_i\equiv N$, $\xi_{j,i}\equiv\xi_j$ for all $i\in[m]$.
\end{enumerate}

Moreover, if Assumption \ref{extra.cte.control}(iii) holds, then for all $x\in X, v\in\re^{n}$,
$$
\Lpnorm{\left\Vert\epsilon(x)\right\Vert}\leq \frac{C_p\sigma}{\sqrt{N_{\min}}},\quad
\Lpnorm{\langle v,\epsilon(x)\rangle}\leq \Vert v\Vert\frac{C_p\sigma}{\sqrt{N_{\min}}}.
$$
\end{lema}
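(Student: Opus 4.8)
The plan is to prove the two bounds for $\epsilon(x)$ by isolating a single coordinate block $\epsilon_i(x)$, recognizing it as the terminal value of a martingale difference sum, and applying the Burkholder--Davis--Gundy inequality \eqref{BDG} with $\qc = p \geq 2$ componentwise. First I would fix $x \in X$ and $i \in [m]$, and for $j \in \{0,1,\ldots,N_i\}$ set $y_j^{(i)} := \sum_{\ell=1}^{j} \big(F_i(\xi_{\ell,i},x) - T_i(x)\big)/N_i$, so that $y_0^{(i)} = 0$ and $y_{N_i}^{(i)} = \epsilon_i(x)$. Since the $\xi_{\ell,i}$ are i.i.d.\ copies of $\xi$ with $\esp[F_i(\xi_{\ell,i},x)] = T_i(x)$ by \eqref{expected}, the sequence $(y_j^{(i)})_j$ is a martingale with respect to the natural filtration, with increments $y_j^{(i)} - y_{j-1}^{(i)} = \big(F_i(\xi_{j,i},x) - T_i(x)\big)/N_i$. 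Applying \eqref{BDG} to this $\re^{n_i}$-valued martingale gives
\begin{equation*}
\Lpnorm{\Vert \epsilon_i(x)\Vert} \leq C_p \sqrt{\sum_{j=1}^{N_i} \frac{1}{N_i^2}\,\Lpnorm{\Vert F_i(\xi_{j,i},x) - T_i(x)\Vert}^2}.
\end{equation*}
Under Assumption \ref{extra.cte.control}(i), each summand is bounded by $\sigma(x^*)^2(1+\|x-x^*\|)^2 = f(x,x^*)^2$ (note $\Lpnorm{\Vert F_i\Vert} \le \Lpnorm{\Vert F\Vert}$), so the right side is at most $C_p f(x,x^*)/\sqrt{N_i}$, i.e.\ $\Lpnorm{\Vert \epsilon_i(x)\Vert}^2 \le C_p^2 f(x,x^*)^2 / N_i$.

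Next I would assemble the blocks. For the first inequality, $\Vert \epsilon(x)\Vert^2 = \sum_{i=1}^m \Vert \epsilon_i(x)\Vert^2$, so by Minkowski's inequality in $L^q$ (recall $q = p/2 \ge 1$), $\Lpnorm{\Vert \epsilon(x)\Vert}^2 = \Lqnorm{\Vert \epsilon(x)\Vert^2} \le \sum_{i=1}^m \Lqnorm{\Vert \epsilon_i(x)\Vert^2} = \sum_{i=1}^m \Lpnorm{\Vert \epsilon_i(x)\Vert}^2 \le C_p^2 f(x,x^*)^2 \sum_{i=1}^m 1/N_i = C_p^2 f(x,x^*)^2 / \mathcal{N}$. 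This already yields the claimed bound with $\mathsf{A} = 1$; but for $m = 1$ this is exact, and the factor $\mathsf{A} = 2$ for $m > 1$ is simply a (loose, convenient) allowance. Actually I expect the refinement is needed in the other direction: when $m>1$ one can also sum the \emph{squared increments across blocks} and apply BDG once to the full $\re^n$-valued martingale when the samples are jointly i.i.d., but when they are not jointly independent across $i$ one must pay an extra factor — hence $\mathsf{A} = 2$. I would handle the jointly-i.i.d.\ case directly (one martingale in $\re^n$) and the general case by the block decomposition above, which already gives constant $1$; the statement's $\mathsf{A}=2$ is then just a safe uniform bound absorbing the triangle-inequality cross terms in the $\mathsf{B}$ analysis.

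For the second inequality, the bilinear functional $\langle v, \epsilon(x)\rangle = \sum_{i=1}^m \langle v_i, \epsilon_i(x)\rangle$ is again (after fixing $v$) the terminal value of a scalar martingale: under centralized sampling ($m=1$, or $N_{k,i}\equiv N_k$ with shared samples), $\langle v, \epsilon(x)\rangle = \sum_{j=1}^{N} \langle v, F(\xi_j,x) - T(x)\rangle / N$ is a single martingale in $\re$ whose $j$-th squared increment has $L^q$-norm $\le \|v\|^2 f(x,x^*)^2 / N^2$ by Cauchy--Schwarz, so \eqref{BDG} gives $\Lpnorm{\langle v,\epsilon(x)\rangle} \le C_p \|v\| f(x,x^*)/\sqrt{N} = C_p\|v\| f(x,x^*)\sqrt{1/\mathcal N}$, i.e.\ $\mathsf{B}=1$. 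Under distributed sampling the $m$ blocks are independent, so I would either concatenate into one scalar martingale ordered block-by-block (increments across all $i$ and $j$) and apply \eqref{BDG}, picking up $\sum_{i=1}^m \|v_i\|^2 f(x,x^*)^2 / N_i \le \|v\|^2 f(x,x^*)^2 \max_i (1/N_i)$ — but a cleaner route giving the stated $1/\mathcal N$ is to use the per-block bound $\Lpnorm{\langle v_i, \epsilon_i(x)\rangle} \le C_p \|v_i\| f(x,x^*)/\sqrt{N_i}$ and Minkowski over $i$, then Cauchy--Schwarz: $\sum_i \|v_i\|/\sqrt{N_i} \le \sqrt{\sum_i \|v_i\|^2}\sqrt{\sum_i 1/N_i} = \|v\|/\sqrt{\mathcal N}$; independence lets one instead sum the squares, $\Lpnorm{\langle v,\epsilon(x)\rangle}^2 \le \sum_i \Lpnorm{\langle v_i,\epsilon_i(x)\rangle}^2$ up to a constant, giving $\mathsf{B}=2$. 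Finally, for Assumption \ref{extra.cte.control}(iii) I would run the same BDG argument on each \emph{scalar} component $\epsilon_{\ell,i}(x) = \sum_{j} (F_{\ell,i}(\xi_{j,i},x)-T_{\ell,i}(x))/N_i$, getting $\Lpnorm{\epsilon_{\ell,i}(x)} \le C_p \sigma_{\ell,i}/\sqrt{N_i} \le C_p\sigma_{\ell,i}/\sqrt{N_{\min}}$, then use $\Vert \epsilon(x)\Vert^2 = \sum_{i,\ell}\epsilon_{\ell,i}(x)^2$ with Minkowski in $L^q$ and $\sum_{i,\ell}\sigma_{\ell,i}^2 = \sigma^2$ for the first bound, and Cauchy--Schwarz $\sum_{i,\ell}|v_{\ell,i}|\sigma_{\ell,i} \le \|v\|\sigma$ for the second.

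The main obstacle is bookkeeping the constants $\mathsf{A}, \mathsf{B}$ correctly across the three sampling regimes: whether one applies \eqref{BDG} once to a high-dimensional martingale (exact constant, needs joint independence of all samples used) or separately per block and then recombines via Minkowski (costs either a Cauchy--Schwarz step converting $\sum 1/N_i$ to the right shape, or a factor $2$ from squaring after an $L^1$-to-$L^2$ passage). I would organize the proof as: (a) the single-block/single-component BDG estimate; (b) recombination under joint i.i.d.\ (one martingale, constants $1$); (c) recombination under mere block-independence or centralized sharing (Minkowski $+$ Cauchy--Schwarz, yielding the stated $\mathsf A,\mathsf B$); (d) the Assumption \ref{extra.cte.control}(iii) case, which is (a)--(c) specialized to scalar components with $f(x,x^*)$ replaced by $\sigma_{\ell,i}$ and $\mathcal N$ by $N_{\min}$.
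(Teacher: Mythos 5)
Your proposal is correct and follows essentially the same route as the paper: a per-block (or, under Assumption \ref{extra.cte.control}(iii), per-component) application of the Burkholder--Davis--Gundy inequality \eqref{BDG}, recombination of the blocks via Minkowski's inequality in $L^{p/2}$, Cauchy--Schwarz for the inner-product bound, and a single scalar martingale in the centralized case. Your bookkeeping in fact yields the sharper constants $\mathsf{A}=\mathsf{B}=1$ throughout (which of course implies the stated bounds); note only that the paper's $\mathsf{A}=2$ for $m>1$ does not come from any independence issue, as you speculate, but merely from the elementary step $(1+\|x-x^*\|)^2\le 2(1+\|x-x^*\|^2)$ used to recast the bound in the form needed later in Proposition \ref{prop:A}.
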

In the remainder of the paper, we take $\mathsf{A}$ and $\mathsf{B}$ as given in Lemma \ref{lema:indsums} 
with $\{\mathcal{N}_k\}$, $\{N_{k,\min}\}$, $\sigma(\cdot)$ and $\sigma$ as 
given in Assumptions \ref{extra.cte.samples}, \ref{extra.cte.unbias} and \ref{extra.cte.control}.

\quad

The following two results will establish upper bounds on $A_{k+1}-A_{k}$ and $M_{k+1}(x^*)-M_k(x^*)$ in 
terms of $\Vert x^k-x^*\Vert^2$ for any $x^*\in X^*$. Under the Assumptions \ref{extra.cte.control}(i)-(ii) of 
non-uniform variance, we need first a bound of $\|x^*-z^k\|^2$ in terms of $\|x^k-x^*\|^2$.  We define:
\begin{eqnarray}
\mathsf{G}_{k,p}(x^*)&:=&\alpha_{k}C_p\sigma(x^*),\label{prop:zk:gk}\\
\mathsf{H}_{k,p}(x^*)&:=&\mathsf{G}_{k,p}(x^*)\sqrt{\frac{\mathsf{A}}{\mathcal{N}_k}},\label{prop:zk:hk}
\end{eqnarray}

\begin{prop}\label{prop:zk}
Consider Assumptions \ref{existence}-\ref{extra.cte.control}. If Assumption \ref{extra.cte.control}(i) hold for some $x^*\in X^*$, then
$$
\Lpnorm{\|z^k-x^*\|\big|\alg_k}\leq 
\left[1+L\alpha_{k}+\mathsf{H}_{k,p}(x^*)\right]\|x^k-x^*\| + \mathsf{H}_{k,p}(x^*).
$$

Moreover, if Assumption \ref{extra.cte.control}(iii) holds, then
$$
\Lpnorm{\Vert z^k-x^*\Vert\big|\alg_k}\leq (1+\mathcal{L}\alpha_{k})\,\|x^k-x^*\|+
\alpha_{k}\left(\mathcal{M}+\frac{C_p\sigma}{\sqrt{N_{k,\min}}}\right),
$$
with $\mathcal{L}=L$ and $\mathcal{M}=0$ or, alternatively, 
if $\sup_{x\in X}\Vert T(x)\Vert\le M<\infty$, with $\mathcal{L}=0$ and $\mathcal{M}=2M$.
\end{prop}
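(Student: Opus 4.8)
The plan is to reduce the statement to the nonexpansiveness of the projection and a \emph{conditional} ("frozen-point") version of Lemma~\ref{lema:indsums}. First I would write the first extragradient step in the compact form $z^k=\Pi[x^k-\alpha_k(T(x^k)+\epsilon^k_1)]$, recall from Lemma~\ref{proj}(iv) that $x^*=\Pi[x^*-\alpha_k T(x^*)]$ for every $x^*\in X^*$, and apply the $1$-Lipschitz property of $\Pi$ (Lemma~\ref{proj}(iii)) together with the triangle inequality to obtain, pathwise,
\[
\|z^k-x^*\|\le\big\|(x^k-x^*)-\alpha_k\big(T(x^k)-T(x^*)\big)-\alpha_k\epsilon^k_1\big\|\le\|x^k-x^*\|+\alpha_k\|T(x^k)-T(x^*)\|+\alpha_k\|\epsilon^k_1\|.
\]
Under Assumption~\ref{extra.cte.lipschitz} the middle term is at most $L\alpha_k\|x^k-x^*\|$, so the right-hand side is bounded by $(1+L\alpha_k)\|x^k-x^*\|+\alpha_k\|\epsilon^k_1\|$.

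Next I would take the conditional $L^p$-norm given $\alg_k$. Since $x^k\in\alg_k$, the quantity $(1+L\alpha_k)\|x^k-x^*\|$ is $\alg_k$-measurable and passes through unchanged, so the only term left to estimate is $\Lpnorm{\|\epsilon^k_1\|\big|\alg_k}$. This is where the conditional form of Lemma~\ref{lema:indsums} enters: by Assumption~\ref{extra.cte.unbias} the sample $\xi^k$ used to form $\epsilon^k_1=\epsilon(x^k)$ is i.i.d.\ as $\xi$ and independent of $\alg_k$, while $x^k\in\alg_k$; hence, conditioning on $\alg_k$ and invoking Lemma~\ref{lema:indsums} with the (now deterministic) argument $x=x^k$ gives $\Lpnorm{\|\epsilon^k_1\|\big|\alg_k}\le\sqrt{\mathsf{A}/\mathcal{N}_k}\,C_p\,\sigma(x^*)\big(1+\|x^k-x^*\|\big)$ whenever Assumption~\ref{extra.cte.control}(i) holds at $x^*$ (case (ii) is subsumed, since it merely asserts (i) at every $x^*\in X^*$). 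Substituting this, and recalling $\mathsf{G}_{k,p}(x^*)=\alpha_k C_p\sigma(x^*)$ and $\mathsf{H}_{k,p}(x^*)=\mathsf{G}_{k,p}(x^*)\sqrt{\mathsf{A}/\mathcal{N}_k}$, the bound becomes $(1+L\alpha_k)\|x^k-x^*\|+\mathsf{H}_{k,p}(x^*)\big(1+\|x^k-x^*\|\big)$, which after collecting the coefficient of $\|x^k-x^*\|$ is exactly the first claimed inequality.

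For Assumption~\ref{extra.cte.control}(iii) I would repeat the same two steps, replacing only the last estimate by the uniform bound $\Lpnorm{\|\epsilon^k_1\|\big|\alg_k}\le C_p\sigma/\sqrt{N_{k,\min}}$ coming from the second half of Lemma~\ref{lema:indsums}; this yields the stated bound with $\mathcal{L}=L$ and $\mathcal{M}=0$. For the bounded-operator alternative, rather than estimating $\|T(x^k)-T(x^*)\|\le L\|x^k-x^*\|$ I would bound it by $\|T(x^k)\|+\|T(x^*)\|\le 2M$ directly from $\sup_{x\in X}\|T(x)\|\le M$, giving $\|z^k-x^*\|\le\|x^k-x^*\|+2M\alpha_k+\alpha_k\|\epsilon^k_1\|$ and hence the stated bound with $\mathcal{L}=0$ and $\mathcal{M}=2M$. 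The only genuinely delicate point is the conditional application of Lemma~\ref{lema:indsums}: one must justify that, because $\xi^k$ is independent of $\alg_k$ and $x^k$ is $\alg_k$-measurable, the $L^p$ estimate for $\epsilon(x)$ at a fixed point $x$ transfers to $\epsilon(x^k)$ with $x^k$ substituted inside the conditional expectation --- a routine "freezing"/substitution lemma for conditional expectations, but one worth stating explicitly. Everything else is the triangle inequality and bookkeeping with the constants $\mathsf{G}_{k,p},\mathsf{H}_{k,p}$.
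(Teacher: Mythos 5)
Your proposal is correct and follows essentially the same route as the paper's proof: nonexpansiveness of $\Pi$ plus the fixed-point identity $x^*=\Pi[x^*-\alpha_k T(x^*)]$ to get the pathwise bound $(1+L\alpha_k)\|x^k-x^*\|+\alpha_k\|\epsilon^k_1\|$, then Minkowski and the conditional application of Lemma~\ref{lema:indsums} (using $x^k\in\alg_k$ and independence of $\xi^k$ from $\alg_k$) to control $\Lpnorm{\|\epsilon^k_1\|\,\big|\alg_k}$, with the same modifications for the uniform-variance and bounded-operator cases. The "freezing" step you flag is indeed the only subtle point, and the paper invokes it in exactly the same implicit way.
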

\begin{proof}
Recall that $z^k =\Pi[x^k-\alpha_k(T(x^k)+\epsilon^k_1)]$. By Lemma \ref{proj}(iv), we have $x^*=\Pi[x^*-\alpha_k T(x^*)]$. 

Consider first Assumption \ref{extra.cte.control}(i). By Lemma \ref{proj}(iii),
\begin{eqnarray}
\|x^*-z^k\|&\leq &\,\| x^*-x^k - \alpha_k(T(x^*)-T(x^k))+\alpha_k\epsilon^k_1\|\nonumber\\
&\leq & \,\|x^*-x^k\| +\alpha_k\| T(x^k)-T(x^*)\| + \alpha_k\,\left\|\epsilon^k_1\right\|\nonumber\\
&\leq & (1+L\alpha_{k})\,\|x^*-x^k\|+\alpha_{k}\,\left\|\epsilon^k_1\right\|,\label{prop:zk.eq1}
\end{eqnarray}
using the Lipschitz continuity of $T$ in the last inequality. Using $x^k\in\alg_k$ and taking $\Lpnorm{\cdot|\alg_k}$ in \eqref{prop:zk.eq1} we get from Minkowski's inequality, 
\begin{equation}\label{prop:zk.eq1.1}
\Lpnorm{\Vert z^k-x^*\Vert\big|\alg_k}\leq (1+L\alpha_{k})\,\|x^*-x^k\|+\alpha_{k}\Lpnorm{\left\|\epsilon^k_{1}\right\|\Big|\alg_k}.
\end{equation}
We now recall the definition of $\epsilon^k_1$ in \eqref{extra.cte.noise.sample1}. We have 
\begin{equation}\label{prop:zk.eq2}
\Lpnorm{\left\|\epsilon^k_{1}\right\|\Big|\alg_k}\leq
\left(\frac{\mathsf{A}}{\mathcal{N}_k}\right)^{\frac{1}{2}}C_p\sigma(x^*)\,(1+\|x^k-x^*\|),
\end{equation}
using Lemma \ref{lema:indsums}, $x^k\in\alg_k$ and the independence of $\xi^k$ with $\alg_k$. Relations \eqref{prop:zk.eq1.1}-\eqref{prop:zk.eq2} prove the required claim.

We now consider Assumption \ref{extra.cte.control}(iii). In this case, \eqref{prop:zk.eq1} may be replaced by
\begin{equation}\label{prop:zk.eq3}
\Vert x^*-z^k\Vert\leq (1+\mathcal{L}\alpha_{k})\,\|x^*-x^k\|+
\alpha_{k}\left(\mathcal{M}+\left\|\epsilon^k_{1}\right\|\right),
\end{equation}
with $\mathcal{L}$ and $\mathcal{M}$ as stated in the proposition.  Using $x^k\in\alg_k$ and taking $\Lpnorm{\cdot|\alg_k}$ in \eqref{prop:zk.eq3} we get from Minkowski's inequality, 
\begin{equation}\label{prop:zk.eq3.1}
\Lpnorm{\Vert z^k-x^*\Vert\big|\alg_k}\leq  (1+\mathcal{L}\alpha_{k})\,\|x^*-x^k\|+\alpha_{k}\left(\mathcal{M}+\Lpnorm{\left\|\epsilon^k_{1}\right\|\Big|\alg_k}\right).
\end{equation}
By Lemma \ref{lema:indsums}, relation \eqref{prop:zk.eq2} is replaced by 
$
\Lpnorm{\left\|\epsilon^k_{1}\right\|\Big|\alg_k}\leq
\frac{C_p\sigma}{\sqrt{N_{k,\min}}}.
$
The claim results from the previous inequality and \eqref{prop:zk.eq3.1}.
\end{proof}

\quad

The following proposition gives bounds on the increments of $\{A_k\}$ and $\{M_k(x^*)\}$ in terms of $\|x^k-x^*\|^2$. 
Recall definitions \eqref{def:rho}-\eqref{def:M} and \eqref{prop:zk:gk}-\eqref{prop:zk:hk}.

\quad

\begin{prop}[Bounds on increments]\label{prop:A}
Consider Assumptions \ref{existence}-\ref{extra.cte.control}. If Assumption \ref{extra.cte.control}(i) 
holds for some $x^*\in X^*$, then, for all $k\in\mathbb{N}_0$, 	
\begin{eqnarray*}
\Lqnorm{A_{k+1}-A_k|\alg_k}&\le &\left[32\left(1+L\alpha_{k}+\mathsf{H}_{k,p}(x^*)\right)^2 + 
2(8+\rho_k)\right]\mathsf{H}_{k,p}(x^*)^2\|x^k-x^*\|^2\\
&&+\left[32\mathsf{H}_{k,p}(x^*)^2+16+2(8+\rho_k)\right]\mathsf{H}_{k,p}(x^*)^2,
\end{eqnarray*}
$$
\Lqnorm{M_{k+1}(x^*)-M_k(x^*)|\alg_k}\le 
\sqrt{\frac{\mathsf{B}}{\mathsf{A}}}\mathsf{H}_{k,p}(x^*)\left[1+L\alpha_{k}+\mathsf{H}_{k,p}(x^*)\right]^2\|x^k-x^*\|^2
$$
$$
+\sqrt{\frac{\mathsf{B}}{\mathsf{A}}}\mathsf{H}_{k,p}(x^*)
\left[1+L\alpha_{k}+(3+2L\alpha_{k})\mathsf{H}_{k,p}(x^*)+2\mathsf{H}_{k,p}(x^*)^2\right]\|x^k-x^*\|
$$
$$
+\sqrt{\frac{\mathsf{B}}{\mathsf{A}}}\mathsf{H}_{k,p}(x^*)\left[\mathsf{H}_{k,p}(x^*)+\mathsf{H}_{k,p}(x^*)^2\right].
$$
Moreover, if Assumption \ref{extra.cte.control}(iii) holds, then, for all $k\in\mathbb{N}_0$,
\begin{eqnarray*}
\Lqnorm{A_{k+1}-A_k|\alg_k}&\leq &\left(16+\rho_k\right)\alpha_{k}^2\frac{C_p^2\sigma^2}{N_{k,\min}},\\
\Lqnorm{M_{k+1}(x^*)-M_k(x^*)|\alg_k}&\leq & 
\left(1+\mathcal{L}\alpha_{k}\right)\alpha_{k}\frac{C_p\sigma}{\sqrt{N_{k,\min}}}\Vert x^k-x^*\Vert\\
&+&\left(\mathcal{M}+\frac{C_p\sigma}{\sqrt{N_{k,\min}}}\right)\alpha_{k}^2\frac{C_p\sigma}{\sqrt{N_{k,\min}}}.
\end{eqnarray*}
\end{prop}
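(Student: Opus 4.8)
The plan is to expand the one-step increments directly from their definitions \eqref{def:A}--\eqref{def:M}, take the conditional $L^q$-norm (recall $q=p/2\ge 1$, so Minkowski applies), and then control the resulting conditional norms of $\|\epsilon^k_1\|^2$, $\|\epsilon^k_2\|^2$ and $\langle x^*-z^k,\epsilon^k_2\rangle$ via Lemma \ref{lema:indsums} together with Proposition \ref{prop:zk}. The decisive bookkeeping fact is that $q=p/2$: for any random vector $Y$ one has $\Lqnorm{\|Y\|^2\,|\,\cdot}=\Lpnorm{\|Y\|\,|\,\cdot}^2$, so an $L^q$-estimate of a \emph{squared} quantity is exactly the square of an $L^p$-estimate; moreover $2q=p$, so $\Lqnorm{\|z^k-x^*\|^2\,|\,\alg_k}=\Lpnorm{\|z^k-x^*\|\,|\,\alg_k}^2$, which is precisely what Proposition \ref{prop:zk} controls. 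In particular no moment of order larger than $p$ is ever needed.

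For the $\|\epsilon^k_1\|^2$ term: since $x^k\in\alg_k$ and, by Assumption \ref{extra.cte.unbias}, $\xi^k$ is independent of $\alg_k$ and i.i.d.\ as $\xi$, Lemma \ref{lema:indsums} applies conditionally on $\alg_k$ and yields $\Lpnorm{\|\epsilon^k_1\|\,|\,\alg_k}\le\sqrt{\mathsf{A}/\mathcal{N}_k}\,C_p\sigma(x^*)(1+\|x^k-x^*\|)=\alpha_k^{-1}\mathsf{H}_{k,p}(x^*)(1+\|x^k-x^*\|)$. For the terms involving $\epsilon^k_2$ the point is that $z^k\in\widehat\alg_k$ but $z^k\notin\alg_k$, so one conditions first on $\widehat\alg_k$, where $\eta^k$ is independent and i.i.d.\ as $\xi$: Lemma \ref{lema:indsums} gives $\Lpnorm{\|\epsilon^k_2\|\,|\,\widehat\alg_k}\le\alpha_k^{-1}\mathsf{H}_{k,p}(x^*)(1+\|z^k-x^*\|)$ and $\Lpnorm{\langle x^*-z^k,\epsilon^k_2\rangle\,|\,\widehat\alg_k}\le\|z^k-x^*\|\sqrt{\mathsf{B}/\mathcal{N}_k}\,C_p\sigma(x^*)(1+\|z^k-x^*\|)$, both $\widehat\alg_k$-measurable. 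Then the tower identity $\Lpnorm{Y\,|\,\alg_k}^p=\esp[\Lpnorm{Y\,|\,\widehat\alg_k}^p\,|\,\alg_k]$ (valid since $\alg_k\subset\widehat\alg_k$) plus Minkowski reduce everything to $\Lpnorm{\|z^k-x^*\|\,|\,\alg_k}$ and $\Lpnorm{\|z^k-x^*\|\,|\,\alg_k}^2$, which Proposition \ref{prop:zk} bounds by $[1+L\alpha_k+\mathsf{H}_{k,p}(x^*)]\|x^k-x^*\|+\mathsf{H}_{k,p}(x^*)$ and its square.

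With these building blocks the two inequalities under Assumption \ref{extra.cte.control}(i) follow by substitution, multiplying out, and repeatedly invoking $(a+b)^2\le 2a^2+2b^2$ to reorganize into the stated coefficients of $\|x^k-x^*\|^2$, $\|x^k-x^*\|$ and the constant term. Concretely, in the $A$-bound the $\epsilon^k_2$-contribution is $8\alpha_k^2\Lpnorm{\|\epsilon^k_2\|\,|\,\alg_k}^2\le 8\mathsf{H}_{k,p}(x^*)^2\big(1+\Lpnorm{\|z^k-x^*\|\,|\,\alg_k}\big)^2$, and applying $(a+b)^2\le 2a^2+2b^2$ once to peel off the additive $1$ and once more to split the $\|x^k-x^*\|$-term produces exactly the coefficients $32(1+L\alpha_k+\mathsf{H}_{k,p}(x^*))^2$ and $16+32\mathsf{H}_{k,p}(x^*)^2$; adding the $\epsilon^k_1$-contribution $(8+\rho_k)\mathsf{H}_{k,p}(x^*)^2(1+\|x^k-x^*\|)^2$ supplies the $2(8+\rho_k)$ terms. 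In the $M$-bound one does not split the mixed term: expanding $u+u^2$ with $u:=[1+L\alpha_k+\mathsf{H}_{k,p}(x^*)]\|x^k-x^*\|+\mathsf{H}_{k,p}(x^*)$ directly gives the three bracketed polynomials. The case of Assumption \ref{extra.cte.control}(iii) is handled the same way, using instead the uniform-variance estimates of Lemma \ref{lema:indsums} (with $C_p\sigma/\sqrt{N_{k,\min}}$ replacing $\mathsf{H}_{k,p}(x^*)$) and the second part of Proposition \ref{prop:zk} (with the alternative $\mathcal{L},\mathcal{M}$); since the noise bound no longer depends on the evaluation point, no further cross terms in $\|x^k-x^*\|^2$ appear beyond those displayed.

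The main obstacle is not any individual estimate but the disciplined tracking of measurability — $x^k\in\alg_k$ while $z^k\in\widehat\alg_k\setminus\alg_k$, which forces the two-stage conditioning for every $\epsilon^k_2$-term — together with the moment accounting, which only closes because of the coincidences $q=p/2$ and $2q=p$; these are exactly what lets $\Lqnorm{(\cdot)^2\,|\,\cdot}$ collapse to $\Lpnorm{(\cdot)\,|\,\cdot}^2$ and makes the $L^p$-bound of Proposition \ref{prop:zk} the right tool rather than an unavailable $L^{2p}$-bound. Once that architecture is fixed, matching the stated constants is routine algebra.
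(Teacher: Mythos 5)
Your proposal is correct and follows essentially the same route as the paper's proof: condition the $\epsilon^k_1$-terms on $\alg_k$ and the $\epsilon^k_2$-terms first on $\widehat\alg_k$ and then pass to $\alg_k$ by the tower property, exploit $q=p/2$ so that $\Lqnorm{\|\cdot\|^2\,|\,\cdot}=\Lpnorm{\|\cdot\|\,|\,\cdot}^2$, invoke Lemma \ref{lema:indsums} and Proposition \ref{prop:zk}, and finish with $(a+b)^2\le 2a^2+2b^2$. The constant-tracking you describe reproduces the stated coefficients, so no gap.
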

\begin{proof}
Assume first that Assumption \ref{extra.cte.control}(i) holds. We start with the bound 
on $A_{k+1}-A_k$. Definition \eqref{extra.cte.noise.sample1}, Lemma \ref{lema:indsums}, $x^k\in\alg_k$, 
the independence of $\{\xi^k\}$ and $\alg_k$, and the fact that $(a+b)^2\le2a^2+2b^2$ imply
\begin{equation}\label{prop:A.eq1}
\Lqnorm{\left\|\epsilon^k_1\right\|^2 \Big|\alg_k} = 
\Lpnorm{\left\|\epsilon^k_1\right\| \Big|\alg_k}^2\leq 2\frac{\mathsf{A}}{\mathcal{N}_k}C_p^2\sigma(x^*)^2\,(1+\|x^k-x^*\|^2).	
\end{equation}	
We proceed similarly for a bound of $\epsilon^k_2$ as defined in \eqref{extra.cte.noise.sample2}, 
but with the use of the filtration $\widehat\alg_k$. Lemma \ref{lema:indsums}, $z^k\in\widehat\alg_k$ 
and the independence of $\{\eta^k\}$ and $\widehat\alg_k$ imply 
\begin{equation}\label{prop:A.eq2}
\Lpnorm{\left\|\epsilon^k_{2}\right\|\Big|\widehat\alg_k}\leq
\left(\frac{\mathsf{A}}{\mathcal{N}_k}\right)^{\frac{1}{2}}C_p\sigma(x^*)\,(1+\|z^k-x^*\|).
\end{equation}
We condition 
\eqref{prop:A.eq2} with $\Lpnorm{\Lpnorm{\cdot\Big|\widehat\alg_k}\Big|\alg_k}=\Lpnorm{\cdot\Big|\alg_k}$, 
and then take squares, getting
\begin{equation}\label{prop:A.eq3}
\Lqnorm{\left\|\epsilon^k_2\right\|^2 \Big|\alg_k}=
\Lpnorm{\left\|\epsilon^k_2\right\| \Big|\alg_k}^2\leq 
2\frac{\mathsf{A}}{\mathcal{N}_k}C_p^2\sigma(x^*)^2\,\left(1+\Lpnorm{\|z^k-x^*\|\Big|\alg_k}^2\right).
\end{equation}
Finally we use \eqref{prop:A.eq1}, \eqref{prop:A.eq3}, \eqref{def:A}, Proposition \ref{prop:zk} and relation $(a+b)^2\le2a^2+2b^2$, 
obtaining the required bounds on $A_{k+1}-A_k$.

Now we deal with $M_{k+1}(x^*)-M_k(x^*)$. Definition \eqref{extra.cte.noise.sample2}, 
Lemma \ref{lema:indsums}, $z^k\in\widehat\alg_k$ and the independence of $\{\eta^k\}$ and $\widehat\alg_k$ imply 
\begin{equation}\label{ee9}
\Lpnorm{\langle x^*-z^k,\alpha_k\epsilon^k_2\rangle\Big|\widehat\alg_k}
\leq \Vert z^k-x^*\Vert\alpha_k\sqrt{\frac{\mathsf{B}}{\mathcal{N}_k}}C_p\sigma(x^*)(1+\|z^k-x^*\|).
\end{equation}	
In \eqref{ee9}, we first use $\Lqnorm{\cdot|\widehat\alg_k}\le\Lpnorm{\cdot|\widehat\alg_k}$ and then take $\Lqnorm{\cdot\Big|\alg_k}$, obtaining
\begin{eqnarray}\label{ee10}
\nonumber\Lqnorm{\langle x^*-z^k,\alpha_k\epsilon^k_2\rangle\Big|\alg_k}
&\le & \alpha_{k}\sqrt{\frac{\mathsf{B}}{\mathcal{N}_k}}C_p\sigma(x^*)\,\left(\Lqnorm{\|x^*-z^k\|\Big|\alg_k}+
\Lqnorm{\|x^*-z^k\|^2\Big|\alg_k}\right)\nonumber\\
&\le & \alpha_{k}\sqrt{\frac{\mathsf{B}}{\mathcal{N}_k}}C_p\sigma(x^*)\,\left(\Lpnorm{\|x^*-z^k\|\Big|\alg_k}+
\Lpnorm{\|x^*-z^k\|\Big|\alg_k}^2\right),
\end{eqnarray}	
using the fact that $\Lqnorm{\Lqnorm{\cdot\Big|\widehat\alg_k}\Big|\alg_k}=\Lqnorm{\cdot\Big|\alg_k}$ in the first inequality, and that fact that 
$\Lqnorm{\cdot|\alg_k}\le\Lpnorm{\cdot|\alg_k}$ in the second inequality. Definition \eqref{def:M}, \eqref{ee10}, 
Proposition \ref{prop:zk} and the fact that $(a+b)^2\le2a^2+2b^2$ entail the required bound on $M_{k+1}(x^*)-M_k(x^*)$.

Suppose now that Assumption \ref{extra.cte.control}(iii) hold. First we prove the bound on $\{A_{k+1}-A_k\}$. The proof is similar 
to the previous case, but 
\eqref{prop:A.eq1} and \eqref{prop:A.eq3} are replaced respectively by
\begin{equation}\label{prop:A.eq4}
\Lqnorm{\left\|\epsilon^k_1\right\|^2 \Big|\alg_k}\leq \frac{C_p^2\sigma^2}{N_{k,\min}},
\quad
\Lqnorm{\left\|\epsilon^k_2\right\|^2 \Big|\alg_k}\leq \frac{C_p^2\sigma^2}{N_{k,\min}},
\end{equation}
using Lemma \ref{lema:indsums}. 
From \eqref{def:A} and \eqref{prop:A.eq4} we obtain the required bound on $A_{k+1}-A_k$.
We deal now with $\{M_{k+1}(x^*)-M_k(x^*)\}$. The proof is similar to the previous case, but instead of \eqref{ee9}
now we have
\begin{equation}\label{ee11}
\Lpnorm{\langle x^*-z^k,\alpha_k\epsilon^k_2\rangle\Big|\widehat\alg_k} \leq
\Vert z^k-x^*\Vert\alpha_k\frac{C_p\sigma}{\sqrt{N_{k,\min}}},
\end{equation}
using Lemma \ref{lema:indsums}. In \eqref{ee11}, we use $\Lqnorm{\cdot|\widehat\alg_k}\le\Lpnorm{\cdot|\widehat\alg_k}$ 
and then take $\Lqnorm{\cdot\Big|\alg_k}$, getting
\begin{equation}\label{ee12}
\Lqnorm{\langle x^*-z^k,\alpha_k\epsilon^k_2\rangle\Big|\alg_k}\le 
\alpha_{k}\frac{C_p\sigma}{\sqrt{N_{k,\min}}}\Lqnorm{\Vert z^k-x^*\Vert\Big|\alg_k},
\end{equation}
using the fact that  $\Lqnorm{\Lqnorm{\cdot\Big|\widehat\alg_k}\Big|\alg_k}=\Lqnorm{\cdot\Big|\alg_k}$. The definition \eqref{def:M}, 
Proposition \ref{prop:zk} with $\Lqnorm{\cdot|\alg_k}\le\Lpnorm{\cdot|\alg_k}$ and \eqref{ee12} 
imply the required  bound on $M_{k+1}(x^*)-M_k(x^*)$.
\end{proof}

\quad

Now, we combine Lemma \ref{extra.cte.korpelevich} and Proposition \ref{prop:A} in the following 
recursive relation. Recall definitions \eqref{prop:zk:gk}-\eqref{prop:zk:hk}.

\quad

\begin{prop}[Stochastic quasi-Fej\'er property]\label{extra.cte.fejer}
Consider Assumptions \ref{existence}-\ref{extra.cte.control}. Then for all $k\in\mathbb{N}_0$ and 
for $x^*\in X^*$ as described in Assumption \ref{extra.cte.control}, it holds that
\begin{equation}\label{vvv}
\esp\left[\|x^{k+1}-x^*\|^2|\alg_k\right] \le \|x^{k}-x^*\|^2 -
\frac{\rho_k}{2}r_{\alpha_k}(x^k) + \mathsf{C}_k(x^*)\frac{\mathcal{I}\|x^k-x^*\|^2+1}{\mathcal{N}'_k},
\end{equation}
where, under Assumption \ref{extra.cte.control}(i)-(ii), $\mathcal{I}=1$, $\mathcal{N}_k'=\mathcal{N}_k$ and
\begin{equation}\label{extra.cte.fejer:ck:1}
\mathsf{C}_k(x^*):= \mathsf{A} \mathsf{G}_{k,2}(x^*)^2\left[32\left(1+L\alpha_{k}+\mathsf{H}_{k,2}(x^*)\right)^2 + 18\right],
\end{equation}
while, under Assumption \ref{extra.cte.control}(iii), $\mathcal{I}=0$, $\mathcal{N}_k'=N_{k,\min}$ and
\begin{equation}\label{extra.cte.fejer:ck:2}
\mathsf{C}_k(x^*):=\mathsf{C}_k=(16+\rho_k)\alpha_{k}^2C_2^2\sigma^2.
\end{equation}
\end{prop}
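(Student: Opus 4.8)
The plan is to combine the recursive relation of Lemma~\ref{extra.cte.korpelevich} with the increment bounds of Proposition~\ref{prop:A} by taking conditional expectation with respect to $\alg_k$. Since $x^k\in\alg_k$ and $x^*$, $\alpha_k$, $\rho_k$ and $T$ are deterministic, the quantities $\|x^k-x^*\|^2$ and $\tfrac{\rho_k}{2}r_{\alpha_k}(x^k)$ are $\alg_k$-measurable and pass through the conditional expectation unchanged; thus Lemma~\ref{extra.cte.korpelevich} reduces the task to estimating $\esp[M_{k+1}(x^*)-M_k(x^*)\,|\,\alg_k]$ and $\esp[A_{k+1}-A_k\,|\,\alg_k]$. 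Conditional integrability of all random variables involved is supplied by Proposition~\ref{prop:A}, whose estimates are in $L^{q}$ with $q=p/2\ge1$. Because Assumption~\ref{extra.cte.control} holds for some $p\ge2$ and $L^2$-norms are dominated by $L^p$-norms (with the same $\sigma(x^*)$), I would run the whole argument with $p=2$, i.e.\ $q=1$; this is why only $\mathsf{G}_{k,2}$, $\mathsf{H}_{k,2}$ and $C_2$ appear in the statement.

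Next I would show that the martingale increment is annihilated by the conditional expectation: $\esp[M_{k+1}(x^*)-M_k(x^*)\,|\,\alg_k]=0$. By \eqref{def:M} this increment equals $2\langle x^*-z^k,\alpha_k\epsilon^k_2\rangle$; since $\alg_k\subset\widehat\alg_k$ and $z^k\in\widehat\alg_k$, the tower property together with $\esp[\epsilon^k_2\,|\,\widehat\alg_k]=0$ from \eqref{mean.error} (which rests on the independence of $\eta^k$ from $\widehat\alg_k$ in Assumption~\ref{extra.cte.unbias}) gives
$$
\esp\big[\,2\langle x^*-z^k,\alpha_k\epsilon^k_2\rangle\,\big|\,\alg_k\big]
=\esp\big[\,2\alpha_k\langle x^*-z^k,\,\esp[\epsilon^k_2\,|\,\widehat\alg_k]\rangle\,\big|\,\alg_k\big]=0.
$$
Hence $M$ contributes nothing, and the $M$-bound of Proposition~\ref{prop:A} is used here only to guarantee integrability (it enters in full in the later $L^p$-stability analysis).

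For the bounded-variation term, \eqref{def:A} gives $A_{k+1}-A_k=(8+\rho_k)\alpha_k^2\|\epsilon^k_1\|^2+8\alpha_k^2\|\epsilon^k_2\|^2\ge0$ (nonnegative because $\rho_k\in(0,1)$ by Assumption~\ref{extra.cte.step}), so $\esp[A_{k+1}-A_k\,|\,\alg_k]\le\Lqnorm{A_{k+1}-A_k|\alg_k}$, with equality when $q=1$, and Proposition~\ref{prop:A} applies directly. Under Assumption~\ref{extra.cte.control}(iii) this is immediate: the bound $(16+\rho_k)\alpha_k^2C_2^2\sigma^2/N_{k,\min}$ is already of the form $\mathsf{C}_k/\mathcal{N}'_k$ with $\mathcal{I}=0$, $\mathcal{N}'_k=N_{k,\min}$, which is \eqref{extra.cte.fejer:ck:2}. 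Under Assumption~\ref{extra.cte.control}(i)--(ii) I would substitute $\mathsf{H}_{k,2}(x^*)^2=\mathsf{A}\,\mathsf{G}_{k,2}(x^*)^2/\mathcal{N}_k$ (from \eqref{prop:zk:gk}--\eqref{prop:zk:hk}) into Proposition~\ref{prop:A}'s $A$-bound and then use $\rho_k\in(0,1)$ and $(1+L\alpha_k+\mathsf{H}_{k,2}(x^*))^2\ge 1+\mathsf{H}_{k,2}(x^*)^2$ to absorb both the coefficient of $\|x^k-x^*\|^2$ and the additive constant into $\mathsf{C}_k(x^*)/\mathcal{N}_k$, with $\mathsf{C}_k(x^*)$ as in \eqref{extra.cte.fejer:ck:1}. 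Collecting the three contributions in Lemma~\ref{extra.cte.korpelevich} yields \eqref{vvv} with $\mathcal{I}=1$, $\mathcal{N}'_k=\mathcal{N}_k$.

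I do not expect any genuinely hard step here, since the heavy estimates are already packaged in Lemma~\ref{extra.cte.korpelevich} and Proposition~\ref{prop:A}. The two points that must be handled with care are: (a) conditioning on the right $\sigma$-algebra $\widehat\alg_k$ rather than $\alg_k$, so that $z^k$ is measurable and the unbiasedness identity $\esp[\epsilon^k_2\,|\,\widehat\alg_k]=0$ can be invoked to kill the martingale increment; and (b) checking that the somewhat long constant emerging from Proposition~\ref{prop:A} can indeed be folded into the single symbol $\mathsf{C}_k(x^*)$ after the $\rho_k\le1$ simplification, which is essentially bookkeeping.
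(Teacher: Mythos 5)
Your proposal is correct and follows essentially the same route as the paper: take conditional expectation in Lemma~\ref{extra.cte.korpelevich}, kill the martingale increment via the tower property and $\esp[\epsilon^k_2\,|\,\widehat\alg_k]=0$, and fold the $q=1$ case of Proposition~\ref{prop:A}'s bound on $A_{k+1}-A_k$ into $\mathsf{C}_k(x^*)$ using $\rho_k\le 1$ and $32\bigl(1+L\alpha_k+\mathsf{H}_{k,2}(x^*)\bigr)^2\ge 32\mathsf{H}_{k,2}(x^*)^2+16$. The bookkeeping you describe is exactly the paper's.
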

\begin{proof}
We first note that from definition \eqref{def:M}, for any $x^*\in X^*$, $\{M_{k+1}(x^*)-M_k(x^*),\alg_k\}$ defines a  
martingale difference sequence, that is, $\esp[M_{k+1}(x^*)-M_k(x^*)|\alg_k]$ $=0$ for all $k\in\mathbb{N}_0$. 
Indeed, $z^k\in\widehat\alg_k$ and the independence between $\eta^k$ and $\widehat\alg_k$ 
imply that $\esp[\epsilon^k_2 |\widehat\alg_k]=0$. 
This equality and  the fact that $z^k\in\widehat\alg_k$ imply further that
$$
\esp\left[M_{k+1}(x^*)-M_k(x^*)\Big|\widehat\alg_k\right]=2\left\langle x^*-z^k,\alpha_k\esp\left[\epsilon^k_2 \Big|\widehat\alg_k\right]\right\rangle=0.
$$
We take $\esp[\cdot|\alg_k]$ above and use the hereditary property $\esp[\esp[\cdot|\widehat\alg_k]|\alg_k]=\esp[\cdot|\alg_k]$ 
in order to get $\esp[M_{k+1}(x^*)-M_k(x^*)|\alg_k]=0$ as claimed.

We now take the conditional expectation with respect to $\alg_k$ in Lemma \ref{extra.cte.korpelevich}, obtaining
\begin{equation}\label{prop:telescope.eq2}
\esp[\Vert x^{k+1}-x^*\Vert^2|\alg_k]\le\Vert x^{k}-x^*\Vert^2 -\frac{\rho_k}{2}r_{\alpha_k}(x^k)^2+\esp[A_{k+1}-A_k|\alg_k],
\end{equation}
using the facts that $x^k\in\widehat\alg_k$ and that $\{M_{k+1}(x^*)-M_k(x^*),\alg_k\}$ is a martingale difference sequence.
We have that 
$$
32\left(1+L\alpha_{k}+\mathsf{H}_{k,2}(x^*)\right)^2+2(8+\rho_k)>
32 \mathsf{H}_{k,2}(x^*)^2+16+2(8+\rho_k).
$$ 
Hence, under Assumption \ref{extra.cte.control}(i)-(ii), the bound of $\{A_{k+1}-A_{k}\}$ given
in Proposition \ref{prop:A} implies that 
\begin{equation}\label{prop:fejer.eq2}
\Lqnorm{A_{k+1}-A_{k}|\alg_k}\le \mathsf{C}_k(x^*)\frac{\mathcal{I}\|x^k-x^*\|^2+1}{\mathcal{N}'_k},
\end{equation} 
for all $k\ge0$, with $\mathcal{I}=1$, $\mathcal{N}'_k=\mathcal{N}_k$ and definition of $\mathsf{C}_k(x^*)$.

Under Assumption \ref{extra.cte.control}(iii), 
Proposition \ref{prop:A} 
implies \eqref{prop:fejer.eq2}, with $\mathcal{I}=1$ and $\mathcal{N}'_k=N_{k,\min}$ and definition of $\mathsf{C}_k$.
The claimed relation follows from \eqref{prop:telescope.eq2} and \eqref{prop:fejer.eq2} for $q=1$.
\end{proof}

\begin{rem}
\emph{
Under Assumption \ref{extra.cte.control}(i), the inequality of Proposition \ref{extra.cte.fejer} holds for
a given $x^*\in X^*$, as described in Assumption \ref{extra.cte.control}(i). Under Assumption 
\ref{extra.cte.control}(ii) or (iii), the inequality of Prop. \ref{extra.cte.fejer} holds for every $x^*\in X^*$.
}
\end{rem}

\begin{rem}[Bounds of $A_{k+1}-A_k$]\label{estimate.C}
\emph{Recall definition of $\mathsf{C}_k(x^*)$ in the previous proposition. Under Assumption \ref{extra.cte.control}(i)-(ii), the upper bound on 
\begin{equation}\label{def:C}
\mathsf{C}(x^*):=\sup_k \mathsf{C}_k(x^*)
\end{equation} 
depends only on $p$, $L$, the sampling rate $\mathcal{N}_k$, $\sigma(x^*)^2$ and $\hat\alpha$ as defined in 
Assumption \ref{extra.cte.step}. From \eqref{prop:zk:gk} and \eqref{extra.cte.fejer:ck:1}, under 
Assumption \ref{extra.cte.control}(i)-(ii) there exists constant $c>1$ such that
\begin{equation}\label{estimate:C:eq1}
\frac{\mathsf{C}_k(x^*)}{\mathcal{N}_k}\leq c \mathsf{H}_{k,2}(x^*)^2\left(1+\mathsf{H}_{k,2}(x^*)^2\right)
\leq c\hat\alpha^2C_2^2\frac{\mathsf{A}\sigma(x^*)^2}{\mathcal{N}_k}\left(1+\hat\alpha^2C_2^2\frac{\mathsf{A}\sigma(x^*)^2}{\mathcal{N}_k}\right), 
\end{equation}
that is, $\mathsf{C}(x^*)\lesssim \sigma(x^*)^4$. But since at least $\mathcal{N}_k\ge N_{k,\min}\approx \Theta k^{1+a}(\ln k)^{1+b}$, for some $\Theta>0$,
$a>0$, $b\ge-1$ or $a=0$, $b>0$, the following non-asymptotic bound holds:
\begin{equation}\label{prop:fejer.eq3}
\mathsf{C}_k(x^*)\lesssim \sigma(x^*)^2\left(1+\frac{\sigma(x^*)^2}{\Theta k^{1+a}(\ln k)^{1+b}}\right),
\end{equation}
which is $\approx\sigma(x^*)^2$ for an iteration index $k$ large enough as compared to $\sigma(x^*)^2$.\footnote{In terms of numerical constants, a sharper bound can be obtained by exploiting 
the first order term $H_{k,2}(x^*)\sim \sigma(x^*)\mathcal{N}_k^{-1/2}$ in 
the definition of $\mathsf{C}_k(x^*)$. We do not carry out this procedure here.} 
Under Assumption \ref{extra.cte.control}(iii), the following \emph{uniform} bound holds on $X^*$:
$
\mathsf{C}_k\lesssim\sigma^2.
$
}
\end{rem}

\quad

We finish this section with the asymptotic convergence result.

\quad

\begin{thm}[Asymptotic convergence]
\label{extra.cte.convergence} 
Under Assumptions \ref{existence}-\ref{extra.cte.control}, a.s.
the sequence $\{x^k\}$ generated by \eqref{algorithm.extra.cte1}-\eqref{algorithm.extra.cte2} 
is bounded, 
$
\lim_{k\rightarrow\infty}\dist(x^k,X^*)=0,
$
and
$
r_{\alpha_k}(x^k)
$ 
converges to $0$ almost surely and in $L^2$.
In particular, a.s. every cluster point of $\{x^k\}$ belongs to $X^*$.
\end{thm}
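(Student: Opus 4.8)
The plan is to obtain all three conclusions from the stochastic quasi-Fej\'er inequality of Proposition \ref{extra.cte.fejer} combined with the Robbins--Siegmund theorem (Theorem \ref{rob}). Fix a point $x^*\in X^*$, which exists by Assumption \ref{existence}; under Assumption \ref{extra.cte.control}(i) take $x^*$ to be the point at which the variance bound holds (under (ii)--(iii) any $x^*\in X^*$ works, but a single one suffices for what follows). First I would invoke Theorem \ref{rob} relative to $\{\alg_k\}$ with $y_k:=\|x^k-x^*\|^2$, $u_k:=\tfrac{\rho_k}{2}\,r_{\alpha_k}(x^k)^2$, $a_k:=\mathcal{I}\,\mathsf{C}_k(x^*)/\mathcal{N}'_k$ and $b_k:=\mathsf{C}_k(x^*)/\mathcal{N}'_k$, with $\mathcal{I}$, $\mathcal{N}'_k$ and $\mathsf{C}_k(x^*)$ as in Proposition \ref{extra.cte.fejer}. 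The hypotheses of Theorem \ref{rob} are met: $y_k$ and $r_{\alpha_k}(x^k)^2$ are $\alg_k$-measurable since they are deterministic functions of $x^k\in\alg_k$; the a priori integrability $\esp[\|x^k-x^*\|^2]<\infty$ follows by a routine induction on $k$ from Assumptions \ref{boundedness}, \ref{extra.cte.lipschitz}, \ref{extra.cte.control} and Lemma \ref{lema:indsums} (and then $r_{\alpha_k}(x^k)\le\hat\alpha\,\|T(x^k)\|$ is in $L^2$ as well); and $\sum_k a_k,\sum_k b_k<\infty$ because $\sup_k\mathsf{C}_k(x^*)<\infty$ by Remark \ref{estimate.C} while $\sum_k 1/\mathcal{N}'_k<\infty$ by Assumption \ref{extra.cte.samples} (the two summability conditions there being equivalent for finite $m$, since $1/N_{k,\min}\le 1/\mathcal{N}_k\le m/N_{k,\min}$). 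Theorem \ref{rob} then gives, almost surely, that $\{\|x^k-x^*\|^2\}$ converges and $\sum_k\rho_k\,r_{\alpha_k}(x^k)^2<\infty$.

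From this the almost sure claims follow. Convergence of $\{\|x^k-x^*\|^2\}$ makes $\{x^k\}$ bounded a.s. By Assumption \ref{extra.cte.step}, $\rho_k\ge 1-6L^2\hat\alpha^2=:\rho_{\min}>0$, hence $\sum_k r_{\alpha_k}(x^k)^2<\infty$ and $r_{\alpha_k}(x^k)\to0$ a.s. To identify the cluster points, I would work on the almost sure event on which $\{x^k\}$ is bounded and $r_{\alpha_k}(x^k)\to0$: along any subsequence with $x^{k_j}\to\bar x$, pass to a further subsequence with $\alpha_{k_j}\to\bar\alpha$, where $\bar\alpha\in[\inf_k\alpha_k,\hat\alpha]\subset(0,\infty)$ by Assumption \ref{extra.cte.step}. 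Since $(x,\alpha)\mapsto\big\|x-\Pi[x-\alpha T(x)]\big\|$ is jointly continuous (Lemma \ref{proj}(iii) and Assumption \ref{extra.cte.lipschitz}), we get $r_{\bar\alpha}(\bar x)=\lim_j r_{\alpha_{k_j}}(x^{k_j})=0$, so $\bar x=\Pi[\bar x-\bar\alpha T(\bar x)]$ and Lemma \ref{proj}(iv) yields $\bar x\in\mbox{S}(T,X)=X^*$. Thus a.s.\ every cluster point of $\{x^k\}$ lies in $X^*$. Finally, $\dist(x^k,X^*)\to0$ a.s.: otherwise some subsequence satisfies $\dist(x^{k_j},X^*)\ge\epsilon>0$, and by boundedness a sub-subsequence converges to some $\bar x\in X^*$, whence $\epsilon\le\dist(x^{k_{j_l}},X^*)\le\|x^{k_{j_l}}-\bar x\|\to0$, a contradiction.

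For the $L^2$ statement I would take total expectations in Proposition \ref{extra.cte.fejer}: with $v_k:=\esp[\|x^k-x^*\|^2]$ (finite by the induction above) this reads $v_{k+1}\le(1+a_k)v_k+b_k-\tfrac{\rho_k}{2}\esp[r_{\alpha_k}(x^k)^2]$, so in particular $v_k\le\exp\!\big(\sum_j a_j\big)\big(v_0+\sum_j b_j\big)=:V<\infty$ uniformly in $k$. Summing the same inequality over $k=0,\dots,K$ and using $\rho_k\ge\rho_{\min}$, $\tfrac{\rho_{\min}}{2}\sum_{k=0}^{K}\esp[r_{\alpha_k}(x^k)^2]\le v_0+V\sum_k a_k+\sum_k b_k<\infty$; letting $K\to\infty$ gives $\sum_k\esp[r_{\alpha_k}(x^k)^2]<\infty$, hence $\esp[r_{\alpha_k}(x^k)^2]\to0$, i.e.\ $r_{\alpha_k}(x^k)\to0$ in $L^2$. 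The final assertion of the theorem is the cluster-point identification just obtained.

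The hard part will be the cluster-point step: because the stepsizes are only bounded away from $0$ and from $(\sqrt6\,L)^{-1}$, one cannot simply invoke continuity of a fixed natural residual $r_\alpha$ but must extract a limiting stepsize $\bar\alpha$ and combine joint continuity of $(x,\alpha)\mapsto r_\alpha(x)$ with the characterization of $X^*$ in Lemma \ref{proj}(iv). The remaining subtleties are comparatively routine: establishing the a priori integrability needed to apply Theorem \ref{rob}, and the uniform bound $\sup_k\esp[\|x^k-x^*\|^2]<\infty$ needed to pass to $L^2$.
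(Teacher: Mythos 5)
Your proof is correct and follows essentially the same route as the paper: the Robbins--Siegmund theorem applied to the quasi-Fej\'er inequality of Proposition \ref{extra.cte.fejer} yields a.s. convergence of $\{\|x^k-x^*\|^2\}$ and summability of $\{r_{\alpha_k}(x^k)^2\}$, the cluster points are identified via continuity of $\Pi$ and $T$ together with Lemma \ref{proj}(iv) and the stepsizes being bounded away from zero, and the $L^2$ claim follows from the same recursion after taking total expectations. Your direct summation for the $L^2$ part and your explicit attention to a priori integrability are only cosmetic departures from the paper's argument, which re-invokes Theorem \ref{rob} on the expected recursion.
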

\begin{proof}
The result in Proposition \ref{extra.cte.fejer} may be rewritten as 
\begin{equation}\label{teo:conv:eq1}
\esp\left[\|x^{k+1}-x^*\|^2|\alg_k\right] \le
\left(1+\frac{\mathcal{I} \mathsf{C}(x^*)}{\mathcal{N}_k'}\right)\|x^{k}-x^*\|^2 -\frac{\rho_k}{2}r_{\alpha_k}(x^k)^2 + 
\frac{\mathsf{C}(x^*)}{\mathcal{N}_k'},
\end{equation}
for all $k\ge0$ and for some $x^*\in X^*$, as ensured by Assumption \ref{extra.cte.control}. 
Taking into account 
Assumption \ref{extra.cte.samples}, 
i.e., $\sum_k\mathcal{N}_k^{-1}<\infty$, 
\eqref{teo:conv:eq1}
and the fact that $x^k\in\alg_k$, we apply Theorem \ref{rob} 
with $y_k:=\Vert x^k-x^*\Vert^2$, $a_k=\mathcal{I}\cdot \mathsf{C}(x^*)/\mathcal{N}_k'$, $b_k=\mathsf{C}(x^*)/\mathcal{N}_k'$ 
and $u_k:=\rho_k r_{\alpha_k}(x^k)^2/2$, in order to conclude that a.s.
$\{\Vert x^k-x^*\Vert^2\}$ converges and $\sum_k\rho_k r_{\alpha_k}(x^k)^2<\infty$. In particular, $\{x^k\}$ is bounded, and 
$
\hat\rho\sum_k r_{\alpha_k}(x^k)^2\le\sum_k\rho_k r_{\alpha_k}(x^k)^2<\infty,
$
where $\hat\rho:=1-6\hat\alpha^2L^2>0$ and $\hat\alpha:=\sup_k\alpha_{k}$ by Assumption \ref{extra.cte.step}. 
Hence, almost surely, 
\begin{equation}\label{teo:conv:eq2}
0 = \lim_{k\rightarrow\infty}r_{\alpha_k}(x^k)^2\\
=\lim_{k\rightarrow\infty}\left\Vert x^k-\Pi\left[x^k-\alpha_k T(x^k)\right]\right\Vert^2.
\end{equation}
The boundedness of the stepsize sequence, 
\eqref{teo:conv:eq2}, 
and the continuity of $T$ (Assumption \ref{extra.cte.lipschitz}) and $\Pi$ (Lemma \ref{proj}(iii)) imply that a.s. every  cluster point $\bar x$ of $\{x^k\}$ satisfies
$$
0=\bar x-\Pi\left[\bar x-\bar\alpha T(\bar x)\right],
$$
for some $\bar\alpha>0$, in view of Assumption \ref{extra.cte.step}: i.e. the fact that the stepsizes are
bounded away from zero; from Lemma \ref{proj}(iv) we have that $\bar x\in X^*$. Almost surely, the boundedness of $\{x^k\}$ and 
the fact that every cluster point of $\{x^k\}$ belongs to $X^*$ 
imply that $\lim_{k\rightarrow\infty}\dist(x^k,X^*)=0$ as claimed. 

We now prove convergence of $r_{\alpha_k}(x^k)$ to $0$ in $L^2$. We take total expectation in \eqref{teo:conv:eq1} and obtain for all $k\ge0$,
\begin{equation}\label{teo:conv:eq1.1}
\esp\left[\|x^{k+1}-x^*\|^2\right] \le
\left(1+\frac{\mathcal{I} \mathsf{C}(x^*)}{\mathcal{N}_k'}\right)\esp\left[\|x^{k}-x^*\|^2\right] -\frac{\rho_k}{2}\esp\left[r_{\alpha_k}(x^k)^2\right] + 
\frac{\mathsf{C}(x^*)}{\mathcal{N}_k'}.
\end{equation}
Taking into account 
Assumption \ref{extra.cte.samples}, 
i.e., $\sum_k\mathcal{N}_k^{-1}<\infty$, 
\eqref{teo:conv:eq1.1}
we apply Theorem \ref{rob} 
with $y_k:=\esp[\Vert x^k-x^*\Vert^2]$, $a_k=\mathcal{I}\cdot \mathsf{C}(x^*)/\mathcal{N}_k'$, $b_k=\mathsf{C}(x^*)/\mathcal{N}_k'$ 
and $u_k:=\rho_k\esp[r_{\alpha_k}(x^k)^2]/2$, in order to conclude that 
$\sum_k\rho_k\esp[r_{\alpha_k}(x^k)^2]<\infty$. In particular, 
$
\hat\rho\sum_k \esp[r_{\alpha_k}(x^k)^2]\le\sum_k\rho_k \esp[r_{\alpha_k}(x^k)^2]<\infty,
$
which implies that $\lim_{k\rightarrow\infty}\esp[r_{\alpha_k}(x^k)^2]=0$ as claimed.
\end{proof}

\subsection{Convergence rate and complexity analysis}\label{ss3.4}
We now study the convergence rate and the oracle complexity of our algorithm. 
Besides the relation in Proposition \ref{extra.cte.fejer} for $p=2$,
we can also obtain a recursive relation for higher order moments, 
assuming that $p\ge4$. This recursion, derived as consequence of Propositions \ref{prop:A} and \ref{prop:telescopeandmartingale}(i), will give an explicit upper bound on the $p$-norm of the generated sequence 
(see Proposition \ref{prop:lp:bound}). The explicit bound of the $2$-norm of the sequence will be used for giving explicit estimates on the convergence rate and complexity under Assumption \ref{extra.cte.control}(i)-(ii), 
i.e., when $X$ and $T$ are unbounded, in Theorem \ref{thm:convergence:rate}. In this setting, we will also obtain sharper 
estimates of the constants assuming uniform variance over the \emph{solution set} 
(see Propositions \ref{prop:telescopeandmartingale}(ii), Proposition \ref{prop:lp:bound}(ii) 
and Theorem \ref{thm:convergence:rate:unif}). Important cases satisfying these assumptions include 
the cases in which $X^*$ is a singleton or a compact set (which can occur even when the feasible set $X$ is unbounded).\footnote{This occurs when the solution set is a singleton in the case of a strictly or strongly pseudo-monotone operator. See Theorems 2.3.5 and 2.3.16 in \cite{facchinei} for general conditions of compactness of the solution set of a pseudo-monotone VI. An example is the so called \emph{strictly feasible} complementarity problem over a cone.} Under the stronger Assumption \ref{extra.cte.control}(iii), 
that is, uniform variance over the feasible set, even sharper bounds on the estimates will be presented 
(see Propositions \ref{prop:telescopeandmartingale}(iii) and \ref{prop:lp:bound}(iii) and Theorem \ref{thm:convergence:rate:unif}). 

\begin{prop}[Improved stochastic quasi-Fej\'er properties]\label{prop:telescopeandmartingale}
\begin{itemize}
\item[i)] If Assumption \ref{extra.cte.control}(i) holds for $p\ge4$ and 
some $x^*\in X^*$, then for all $k_0,k$ such that $0\leq k_0<k$,
it holds that
\begin{eqnarray*}
\Lqnorm{\Vert x^k-x^*\Vert^2}&\leq &\Lqnorm{\Vert x^{k_0}-x^*\Vert^2}+\\ 
&& C_q\,\sqrt{\sum_{i=k_0+1}^k\Lqnorm{M_{i}(x^*)-M_{i-1}(x^*)}^2} + \sum_{i=k_0+1}^k\Lqnorm{A_i-A_{i-1}}.
\end{eqnarray*}
\item[ii)] If Assumption \ref{extra.cte.control}(ii) holds for $p\ge2$ then $\mathsf{C}:=\sup_k \mathsf{C}_k:X^*\rightarrow\re_+$, as defined in \eqref{extra.cte.fejer:ck:1} and \eqref{def:C}, is a 
locally bounded measurable function, and for all $k\ge0$,
$$
\esp\left[\dist(x^{k+1},X^*)^2|\alg_k\right] \le \dist(x^{k},X^*)^2 -\frac{\rho_k}{2}r_{\alpha_k}(x^k)^2 + 
\mathsf{C}_k\left(\Pi_{X^*}(x^k)\right)\frac{\dist(x^{k},X^*)^2+1}{\mathcal{N}_k}.
$$
\item[iii)] If Assumption \ref{extra.cte.control}(iii) holds then for all $k\ge0$,
\begin{equation}\label{www}
\esp\left[\dist(x^{k+1},X^*)^2|\alg_k\right] \le \dist(x^{k},X^*)^2 -
\frac{\rho_k}{2}r_{\alpha_k}(x^k)^2 + \frac{17C_2^2\hat\alpha^2\sigma^2}{N_{k,\min}}.
\end{equation}
\end{itemize}
\end{prop}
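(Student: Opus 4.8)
The three items are essentially refinements of the ``base'' stochastic quasi-Fej\'er inequality in Proposition \ref{extra.cte.fejer}, so the plan is to extract more structure from the martingale difference $\{M_{k+1}(x^*)-M_k(x^*)\}$ and the predictable increment $\{A_{k+1}-A_k\}$, rather than to re-derive everything from scratch. For item (i), I would start from the recursive relation in Lemma \ref{extra.cte.korpelevich}, which after telescoping from $k_0$ to $k$ gives
$$
\Vert x^k-x^*\Vert^2\le\Vert x^{k_0}-x^*\Vert^2+\big(M_k(x^*)-M_{k_0}(x^*)\big)+\big(A_k-A_{k_0}\big),
$$
since the $-\tfrac{\rho_i}{2}r_{\alpha_i}(x^i)^2$ terms are nonnegative and can be dropped. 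Then I would take $\Lqnorm{\cdot}$ of both sides and apply Minkowski's inequality to split off the three pieces. The martingale piece is handled by the Burkholder--Davis--Gundy inequality in the form \eqref{BDG}: because $\{M_{i}(x^*)-M_{i-1}(x^*),\alg_i\}$ is a martingale difference sequence (established in the proof of Proposition \ref{extra.cte.fejer}), the partial sums $\sum_{i=k_0+1}^{k}(M_i(x^*)-M_{i-1}(x^*))$ form a martingale in $i$ with increments whose $\qc$-norms are controlled, so \eqref{BDG} with $\qc=q=p/2\ge2$ yields exactly the $C_q\sqrt{\sum\Lqnorm{M_i-M_{i-1}}^2}$ term. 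The $A$-piece is predictable and only needs the triangle inequality, giving $\sum_{i=k_0+1}^k\Lqnorm{A_i-A_{i-1}}$. The one subtlety is applicability of BDG: one must check the martingale is $L^{q}$-bounded, which follows from the moment bounds in Proposition \ref{prop:A} together with the (to-be-established, or assumed inductively) finiteness of $\Lqnorm{\Vert x^i-x^*\Vert^2}$ along the finite range $k_0<i\le k$; I expect this to be the main technical point to state carefully.

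\textbf{Items (ii) and (iii).} For item (iii) the argument is a verbatim repetition of the proof of Proposition \ref{extra.cte.fejer} but with $x^*$ replaced pointwise by the projection $\Pi_{X^*}(x^k)$, which is legitimate here precisely because Assumption \ref{extra.cte.control}(iii) gives a variance bound uniform over all of $X$ (and in particular independent of which solution point one picks). Concretely, Lemma \ref{extra.cte.korpelevich} applied with the fixed point $\bar x^k:=\Pi_{X^*}(x^k)\in X^*$ gives $\Vert x^{k+1}-\bar x^k\Vert^2\le\Vert x^k-\bar x^k\Vert^2-\tfrac{\rho_k}{2}r_{\alpha_k}(x^k)^2+(\text{martingale diff.})+(A\text{-incr.})$; since $\Vert x^k-\bar x^k\Vert=\dist(x^k,X^*)$ and $\dist(x^{k+1},X^*)\le\Vert x^{k+1}-\bar x^k\Vert$, taking $\esp[\cdot|\alg_k]$ and using the bounds of Proposition \ref{prop:A} under Assumption \ref{extra.cte.control}(iii) — namely $\esp[A_{k+1}-A_k|\alg_k]\le(16+\rho_k)\alpha_k^2 C_2^2\sigma^2/N_{k,\min}\le 17 C_2^2\hat\alpha^2\sigma^2/N_{k,\min}$ using $\rho_k\le1$ and $\alpha_k\le\hat\alpha$ — yields \eqref{www} directly. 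For item (ii) the same substitution $x^*\leadsto\Pi_{X^*}(x^k)$ is used, but now one must observe that under Assumption \ref{extra.cte.control}(ii) the constant $\sigma(\cdot)$, and hence $\mathsf{C}_k(\cdot)$ via \eqref{prop:zk:gk}--\eqref{prop:zk:hk} and \eqref{extra.cte.fejer:ck:1}, is a locally bounded measurable function on $X^*$, so $\mathsf{C}:=\sup_k\mathsf{C}_k$ inherits local boundedness and measurability (the $\sup$ over $k$ of a uniformly-in-$k$ bounded family of measurable functions); then Proposition \ref{extra.cte.fejer} with $x^*=\Pi_{X^*}(x^k)$ gives the stated inequality with $\dist(x^k,X^*)^2$ in place of $\Vert x^k-x^*\Vert^2$.

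\textbf{Main obstacle.} I expect the only genuinely delicate point to be the measurability and integrability bookkeeping in item (ii): one must justify that $x^k\mapsto\mathsf{C}_k(\Pi_{X^*}(x^k))$ is $\alg_k$-measurable (so the conditional expectation statement makes sense) and that $\Pi_{X^*}$ is well-defined and measurable, which requires $X^*$ to be closed — true, since $X^*=S(T,X)$ is the solution set of a VI with continuous $T$ and closed $X$, hence closed by Lemma \ref{proj}(iv). Given that, the substitution argument is routine and everything else reduces to re-reading the proofs of Lemma \ref{extra.cte.korpelevich}, Proposition \ref{prop:A} and Proposition \ref{extra.cte.fejer} with the fixed $x^*$ replaced by the (random but $\alg_k$-measurable) nearest solution point. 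For item (i), the secondary obstacle is confirming the hypotheses of the BDG inequality \eqref{BDG}, i.e. that one is dealing with a bona fide $\re^n$-valued ($n=1$ here, as $M_i(x^*)\in\re$) martingale with $M_{k_0}(x^*)$ as origin and square-summable increments in $L^q$, which follows from Proposition \ref{prop:A} once $\{\Vert x^i-x^*\Vert^2\}$ is known to have finite $q$-norm on the range $k_0<i\le k$; since this is a finite range and $p\ge4$, an easy induction on $i$ using the recursion of Lemma \ref{extra.cte.korpelevich} and the moment bounds of Proposition \ref{prop:A} closes the loop.
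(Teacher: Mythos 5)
Your proposal is correct and follows essentially the same route as the paper: telescoping Lemma \ref{extra.cte.korpelevich}, dropping the nonnegative residual terms, applying Minkowski and then the BDG inequality \eqref{BDG} to the martingale $M_{k_0+i}(x^*)-M_{k_0}(x^*)$ for item (i), and substituting $x^*\leadsto\Pi_{X^*}(x^k)$ in Proposition \ref{extra.cte.fejer} (with the measurability of $\mathsf{C}_k(\Pi_{X^*}(x^k))$ noted) for items (ii) and (iii), including the correct numerical bound $(16+\rho_k)\alpha_k^2\le 17\hat\alpha^2$. Your extra remarks on $L^q$-integrability along the finite range and on the closedness of $X^*$ are sound bookkeeping points that the paper leaves implicit.
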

\begin{proof}
i) Define for simplicity $d_k:=\Vert x^k-x^*\Vert^2$. Summing relation in Lemma \ref{extra.cte.korpelevich} from $k_0$ to $k-1$ we obtain
$
0\le d_{k}\le d_{k_0} + M_{k}(x^*)-M_{k_0}(x^*)+A_{k}-A_{k_0}.
$
This relation implies
\begin{equation}\label{ee20}
0\le d_{k}\le d_{k_0} + [M_{k}(x^*)-M_{k_0}(x^*)+A_{k}-A_{k_0}]_+,
\end{equation}
since $a\le b\Rightarrow[a]_+\le[b]_+$ for any $a,b\in\re$. We take the $q$-norm in \eqref{ee20}, getting
\begin{eqnarray}\label{ee15}
\nonumber \Lqnorm{d_k}&\leq &\Lqnorm{d_{k_0}}+ \Lqnorm{[M_k(x^*)-M_{k_0}(x^*) + A_k-A_{k_0}]_+} \\
\nonumber &\leq& \Lqnorm{d_{k_0}} + \Lqnorm{M_k(x^*)-M_{k_0}(x^*) + A_k-A_{k_0}}\\ 
&\leq &\Lqnorm{d_{k_0}} + \Lqnorm{M_{k}(x^*)-M_{k_0}(x^*)} + \sum_{i=k_0+1}^k\Lqnorm{A_i-A_{i-1}},
\end{eqnarray}
using Minkowski's inequality in the first and last inequalities and the fact that $\Lqnorm{U_+}\le\Lqnorm{U}$ 
for any random variable $U$ in the second inequality. 

Since $q\ge2$ ($p\ge4$), the norm of the martingale term above may be estimated via the BDG inequality \eqref{BDG}
applied to the martingale $\tilde{M}_i:=M_{k_0+i}(x^*)-M_{k_0}(x^*)$. This gives:
\begin{equation}\label{ee16}
\Lqnorm{M_{k}(x^*)-M_{k_0}(x^*)}\leq C_q\,\sqrt{\sum_{i=k_0+1}^k\Lqnorm{M_{i}(x^*)-M_{i-1}(x^*)}^2}.
\end{equation}
Plugging \eqref{ee16}  into \eqref{ee15} completes the proof of item (i).

ii) Under Assumption \ref{extra.cte.control}(ii), we define $\bar x^k:=\Pi_{X^*}(x^k)$, recalling Assumption \ref{existence},
 and obtain from Proposition \ref{extra.cte.fejer}:
\begin{eqnarray*}
\esp\big[\dist(x^{k+1},X^*)^2|\alg_k\big] &\le &
\esp\big[\Vert x^{k+1}-\bar x^k\Vert^2\big|\alg_k\big]\\
&\le &\Vert x^{k}-\bar x^k\Vert^2-\frac{\rho_k}{2}r_{\alpha_k}(x^k)^2+ \mathsf{C}_k(\bar x^k)\frac{\|x^k-\bar x^k\|^2+1}{\mathcal{N}_k}\\
&=&\dist(x^k,X^*)^2-\frac{\rho_k}{2}r_{\alpha_k}(x^k)^2+
\mathsf{C}_k\left(\Pi_{X^*}(x^k)\right)\frac{\dist(x^k,X^*)^2+1}{\mathcal{N}_k},
\end{eqnarray*}
using the fact that $\bar x^k\in X^*$ in the first inequality, the facts that 
$\mathsf{C}_k(\bar x^k)\in\alg_k$ (which holds because $x^k\in\alg_k$, $\Pi_{X^*}$ is continuous 
and $\mathsf{C}_k$ is measurable), and $\bar x^k\in X^*$ (cf. Proposition \ref{extra.cte.fejer})
in the second inequality, and the fact that $\dist(x^k,X^*)=\Vert x^{k}-\bar x^k\Vert$ 
in the equality. Note that the function $\mathsf{C}:X^*\rightarrow\re_+$ is measurable and
 locally bounded by Assumption \ref{extra.cte.control}(ii) and the definition of $\mathsf{C}_k(x^*)$.

iii) We use a proof line analogous to the one in item (ii), with Assumption \ref{extra.cte.control}(iii) and Proposition \ref{extra.cte.fejer}.
\end{proof}

\quad

The following result gives explicit bounds on the $p$-norm of the sequence in the unbounded setting. 
In order to make the presentation easier, we introduce some 
definitions. Recall the constant $c$ defined in Remark \ref{estimate.C}. We set 
\begin{eqnarray}
\mathsf{D}_p(x^*)&:=&2c\hat\alpha^2 C_p^2\sigma(x^*)^2,\label{def:D}
\end{eqnarray}
with $\mathsf{D}(x^*):=\mathsf{D}_2(x^*)$. Define also $\mathsf{B}_2(x^*):=0$ and for $p\ge4$, set $\mathsf{\tilde G}_p(x^*):=C_p\hat\alpha\sigma(x^*)$ and
\begin{equation}
\mathsf{B}_p(x^*):=\sqrt{3\mathsf{B}}C_q\mathsf{\tilde G}_p(x^*)\left[(1+L\hat\alpha)^2+(3+2L\hat\alpha)\sqrt{\mathsf{A}}\mathsf{\tilde G}_p(x^*)+2\mathsf{A}\mathsf{\tilde G}_p(x^*)^2\right].\label{prop:def:Bp}
\end{equation}
\begin{prop}[Uniform boundedness in $L^p$]\label{prop:lp:bound}
\begin{itemize}
\item[i)] Let Assumptions \ref{existence}-\ref{extra.cte.control}(i) hold for some $x^*\in X^*$ and $p\in\{2\}\cup[4,\infty)$. Choose $k_0:=k_0(x^*)\in\mathbb{N}$ and $\gamma:=\gamma(x^*)>0$ such that 
\begin{equation}\label{prop:lp:bound:beta}
\beta(x^*):=\mathsf{B}_p(x^*)\sqrt{\gamma}+\mathsf{D}_p(x^*)\gamma+\mathsf{D}_p(x^*)^2\gamma^2<1,\quad\sum_{k\ge k_0}\frac{1}{\mathcal{N}_k}<\gamma. 
\end{equation}
Then
$$
\sup_{k\ge k_0}\Lpnorm{\Vert x^k-x^*\Vert}^2\le\mathsf{c}_p(x^*)\left[1+\Lpnorm{\Vert x^{k_0}-x^*\Vert}^2\right],
$$
with $\mathsf{c}_2(x^*)=[1-\beta(x^*)]^{-1}$ and $\mathsf{c}_p(x^*)=4[1-\beta(x^*)]^{-2}$ for $p\ge4$.

\item[ii)] Let Assumptions \ref{existence}-\ref{extra.cte.control}(ii) hold and suppose there exists $\sigma>0$ such that $\sigma(x^*)\le\sigma$ for all $x^*\in X^*$. Let $\phi\in(0,\frac{\sqrt{5}-1}{2})$.
Choose $k_0\in\mathbb{N}$ such that 
$
\sum_{k\ge k_0}\frac{1}{\mathcal{N}_k}\le\frac{\phi}{2c\hat\alpha^2 C_2^2\sigma^2}.
$
Then 
$$
\sup_{k\ge k_0}\esp\left[\dist(x^k,X^*)^2\right]\le\frac{1+\esp\left[\dist(x^{k_0},X^*)^2\right]}{1-\phi-\phi^2}.
$$

\item[iii)] If Assumptions \ref{existence}-\ref{extra.cte.control}(iii) hold then 
$$
\sup_{k\ge0}\esp\left[\dist(x^k,X^*)^2\right]\le\dist(x^0,X^*)^2+\sum_{k=0}^\infty
\frac{17C_2^2\hat\alpha^2\sigma^2}{N_{k,\min}}.
$$
\end{itemize}
\end{prop}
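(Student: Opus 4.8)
The plan is to prove each of the three items by feeding the appropriate improved stochastic quasi-Fej\'er inequality from Proposition \ref{prop:telescopeandmartingale} into a self-bounding argument. The common mechanism: we have a recursion of the (schematic) form
$\Lqnorm{d_k} \le \Lqnorm{d_{k_0}} + (\text{martingale term}) + (\text{drift term})$,
where both the martingale and the drift terms are controlled by $\sum_{i} \mathcal{N}_i^{-1} < \gamma$ times (an affine function of) the very quantities $\sup_{k_0\le i\le k}\Lqnorm{d_i}$ we wish to bound; solving this ``self-referential'' inequality for $S_N := \sup_{k_0 \le k \le N}\Lqnorm{d_k}$ and taking $N\to\infty$ gives the claim, provided the coefficient $\beta(x^*)$ multiplying $S_N$ is strictly less than $1$.

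\textbf{Item (iii).} This is the easiest and I would do it first. Sum \eqref{www} in telescoped form, take total expectation, and drop the nonnegative term $\frac{\rho_k}{2}r_{\alpha_k}(x^k)^2$ (recall $\rho_k>0$ by Assumption \ref{extra.cte.step}): for every $N\ge 0$,
$$
\esp[\dist(x^{N+1},X^*)^2] \le \dist(x^0,X^*)^2 + \sum_{k=0}^{N}\frac{17C_2^2\hat\alpha^2\sigma^2}{N_{k,\min}}.
$$
Taking the supremum over $N$ and using convergence of $\sum_k N_{k,\min}^{-1}$ (Assumption \ref{extra.cte.samples}(ii)) yields the stated bound. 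No martingale estimate is needed here because the drift term is deterministic and does not involve $\dist(x^k,X^*)^2$.

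\textbf{Item (ii).} Here I would apply Theorem \ref{rob} (Robbins--Siegmund) directly to the recursion of Proposition \ref{prop:telescopeandmartingale}(ii) after taking total expectations, but with the twist that the multiplicative coefficient is $a_k = \mathsf{C}_k(\Pi_{X^*}(x^k))/\mathcal{N}_k \le \mathsf{C}_k(\cdot)\gamma$ where $\mathsf{C}_k \le \sup_k\mathsf{C}_k =: \mathsf{C}$ is bounded on $X^*$ by the hypothesis $\sigma(x^*)\le\sigma$ together with \eqref{estimate:C:eq1}. Concretely: set $y_k := \esp[\dist(x^k,X^*)^2]$; the recursion gives $y_{k+1} \le (1 + \mathsf{C}_k(\cdot)\mathcal{N}_k^{-1})y_k + \mathsf{C}_k(\cdot)\mathcal{N}_k^{-1}$. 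I would actually not use Robbins--Siegmund but instead bound $\mathsf{C}_k(\cdot)/\mathcal{N}_k$: from \eqref{estimate:C:eq1} with uniform $\sigma$, $\mathsf{C}_k(\cdot)\le 2c\hat\alpha^2 C_2^2\sigma^2(1+\cdots)$, and the choice of $k_0$ makes $\sum_{k\ge k_0}\mathsf{C}_k(\cdot)\mathcal{N}_k^{-1}\le\phi$. Then unrolling from $k_0$, $\sup_{k\ge k_0}y_k \le \bigl(\prod_{k\ge k_0}(1+\mathsf{C}_k\mathcal{N}_k^{-1})\bigr)\bigl(y_{k_0}+\sum_{k\ge k_0}\mathsf{C}_k\mathcal{N}_k^{-1}\bigr)$ and, using $\prod(1+t_k)\le (1-\sum t_k)^{-1}$ when $\sum t_k<1$, one gets $\sup_{k\ge k_0} y_k \le (1-\phi)^{-1}(y_{k_0}+\phi)$; refining the $\phi$ bookkeeping (the quadratic-in-$\phi$ refinement coming from the $(1+\cdots)$ factor in $\mathsf{C}_k$) produces the denominator $1-\phi-\phi^2$ with the numerator $1+y_{k_0}$.

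\textbf{Item (i), the main obstacle.} This requires the BDG-based inequality of Proposition \ref{prop:telescopeandmartingale}(i) together with the increment bounds of Proposition \ref{prop:A}. Write $d_k = \Vert x^k-x^*\Vert^2$ and $S_N := \sup_{k_0\le k\le N}\Lqnorm{d_k}$. From Proposition \ref{prop:A}, each $\Lqnorm{A_i - A_{i-1}}$ is bounded by (a constant $\lesssim\mathsf{D}_p(x^*)$ involving $\mathsf{H}_{i-1,2}(x^*)^2 \le \mathsf{D}_p(x^*)\mathcal{N}_{i-1}^{-1}$) times $(\Vert x^{i-1}-x^*\Vert^2+1)$, hence $\sum_{i=k_0+1}^k\Lqnorm{A_i-A_{i-1}} \le (\mathsf{D}_p(x^*)\gamma + \mathsf{D}_p(x^*)^2\gamma^2)(S_N+1)$; likewise $\Lqnorm{M_i-M_{i-1}}\lesssim \mathsf{\tilde G}_p(x^*)\mathcal{N}_{i-1}^{-1/2}(\Vert x^{i-1}-x^*\Vert^2+\cdots)$, so $\sqrt{\sum_i\Lqnorm{M_i-M_{i-1}}^2} \le (\text{const})\sqrt{\sum_i\mathcal{N}_{i-1}^{-1}}\,(S_N+1)^{1/2}$ — here the $\ell^2$-summing in the BDG form \eqref{BDG} is essential, converting $\sum\mathcal{N}^{-1}<\gamma$ into a factor $\sqrt\gamma$, producing the term $\mathsf{B}_p(x^*)\sqrt\gamma\cdot(\cdots)$. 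The hard part is the careful bookkeeping: the martingale term is sublinear in $S_N$ (it enters as $(S_N)^{1/2}$, bounded via $\sqrt{S_N+1}\le \tfrac12\lambda^{-1}(S_N+1)+\tfrac12\lambda$ or simply $\le S_N+1$ when $S_N\ge 1$), the drift term is linear in $S_N$, and combining them yields $S_N \le \Lqnorm{d_{k_0}} + \beta(x^*)S_N + (\text{lower order in }S_N)$ with $\beta(x^*)$ exactly as in \eqref{prop:lp:bound:beta}. Since $\beta(x^*)<1$ by the choice of $\gamma, k_0$, solving gives $S_N \le (1-\beta(x^*))^{-1}(\text{affine in }\Lqnorm{d_{k_0}})$ for $p=2$; for $p\ge 4$ the extra square and factor $4$ in $\mathsf{c}_p(x^*)$ arise because the square-root (martingale) term forces a quadratic inequality $S_N^{1/2}$-wise, i.e. one solves $t \le a + \beta t + b t^{1/2}$ with $t = S_N$, and the worst-case bound on $t$ is $\le 4(1-\beta)^{-2}(1+a)$ type. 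Letting $N\to\infty$ gives $\sup_{k\ge k_0}\Lpnorm{\Vert x^k-x^*\Vert}^2 = \sup_{k\ge k_0}\Lqnorm{d_k} \le \mathsf{c}_p(x^*)(1+\Lqnorm{d_{k_0}})$, which is the claim. The only subtlety to watch is that $S_N$ is finite for each finite $N$ (so that moving $\beta(x^*)S_N$ to the left is legitimate) — this follows since each $x^k$ has finite $p$-th moment by induction, using Proposition \ref{prop:A} and $x^{k_0}\in L^p$.
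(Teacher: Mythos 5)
Your proposal is correct and follows essentially the same route as the paper: each item is obtained by feeding the increment bounds of Proposition \ref{prop:A} and the relations of Propositions \ref{extra.cte.fejer} and \ref{prop:telescopeandmartingale} into a self-bounding inequality whose linear coefficient is exactly the $\beta(x^*)<1$ of \eqref{prop:lp:bound:beta}, and then solving the resulting (quadratic, for $p\ge 4$) inequality to get $\mathsf{c}_2=(1-\beta)^{-1}$ and $\mathsf{c}_p=4(1-\beta)^{-2}$. The differences are only organizational — the paper legitimizes the self-referential step in (i) via a stopping-time contradiction on $\tau_a:=\inf\{k>k_0:\Lpnorm{d_k}\ge a\}$ rather than your truncated supremum $S_N$ plus induction on finiteness of moments, and it reuses that argument for (ii) instead of your product unrolling — and one intermediate display of yours is imprecise (the BDG term is $\lesssim\mathsf{B}_p(x^*)\sqrt{\gamma}\,(1+S_N^{1/2}+S_N)$, not $(\mathrm{const})\sqrt{\gamma}\,(S_N+1)^{1/2}$), though your final inequality $t\le a+\beta t+bt^{1/2}$ already accounts for the linear-in-$S_N$ part correctly, so the plan goes through.
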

\begin{proof}
i) Denote $d^k:=\|x^k-x^*\|$. 
We first unify the Fej\'er-type relations obtained so far under Assumption \ref{extra.cte.control}(i)-(ii) as: for all $k>k_0$,
\begin{eqnarray}
\Lpnorm{d_k}^2&\le &\Lpnorm{d_{k_0}}^2+\mathsf{B}_p(x^*)\sqrt{\sum_{i=k_0}^{k-1}\frac{1+\Lpnorm{d_i}^2+
\Lpnorm{d_i}^4}{\mathcal{N}_i}}+\nonumber\\
&+& \mathsf{D}_p(x^*)\sum_{i=k_0}^{k-1}\frac{1+\Lpnorm{d_i}^2}{\mathcal{N}_i}+
\mathsf{D}_p(x^*)^2\sum_{i=k_0}^{k-1}\frac{1+\Lpnorm{d_i}^2}{\mathcal{N}_i^2}.\label{prop:lp:bound:eq1}
\end{eqnarray}
Indeed, for $p=2$, we have $\mathsf{B}_2(x^*)=0$ so that \eqref{prop:lp:bound:eq1} results by summing the  
relation in Proposition \ref{extra.cte.fejer} 
from $k_0$ to $k-1$ 
and using the estimate in \eqref{estimate:C:eq1}, the fact that $\mathsf{A}\le2$, $c>1$ and the 
definition of $\mathsf{D}_2(x^*)$ as stated before the proposition. For $p\ge4$, we recall the bounds of 
increments of $\{M_k(x^*)\}$ in Proposition \ref{prop:A}. The common factor is bounded
by $\sqrt{\mathsf{B}/\mathsf{A}}\cdot\mathsf{H}_{k,p}(x^*)\le\sqrt{\mathsf{B}}\mathsf{\tilde G}_p(x^*)/\sqrt{\mathcal{N}_k}$. 
Using the definition of $\mathsf{H}_{k,p}(x^*)$ in \eqref{prop:zk:hk}, $\hat\alpha$ in Assumption \ref{extra.cte.step}, 
$\mathsf{\tilde G}_p(x^*)$ and $\mathcal{N}_k\ge1$, it is easy to see that, in the bound of $M_{k+1}(x^*)-M_k(x^*)$ 
in Proposition \ref{prop:A}, the sum of terms multiplying $\sqrt{\mathsf{B}/\mathsf{A}}\cdot\mathsf{H}_{k,p}(x^*)$ 
is at most $(1+L\hat\alpha)^2+(3+2L\hat\alpha)\sqrt{\mathsf{A}}\mathsf{\tilde G}_p+2\mathsf{A}\mathsf{\tilde G}_p^2$. 
We use these bounds, the facts that $(\Lpnorm{d_i}^2+\Lpnorm{d_i}+1)^2\le3(\Lpnorm{d_i}^4+\Lpnorm{d_i}^2+1)$ and the 
definition \eqref{prop:def:Bp} in order to obtain, for all $i\in\mathbb{N}_0$,	
\begin{equation}\label{prop:lp:bound:eq2}
\Lqnorm{M_{i+1}(x^*)-M_i(x^*)}^2\le \mathsf{B}_p(x^*)^2\frac{1+\Lpnorm{d_i}^2+\Lpnorm{d_i}^4}{\mathcal{N}_i}.
\end{equation}
The proof of \eqref{prop:lp:bound:eq1} for $p\ge4$ follows 
from \eqref{prop:fejer.eq2}, \eqref{estimate:C:eq1} with $\mathsf{A}\le2$, $c>1$ and the 
definition of $\mathsf{D}_p(x^*)$ as well as \eqref{prop:lp:bound:eq2} and Proposition \ref{prop:telescopeandmartingale}(i).

By Assumption \ref{extra.cte.samples}, we can choose $k_0\in\mathbb{N}_0$ and $\gamma>0$ as 
in \eqref{prop:lp:bound:beta}. In particular, $\sum_{i\geq k_0}\mathcal{N}_i^{-2}<\gamma^2$. 
Given an arbitrary $a>\Lpnorm{d_{k_0}}$, define:
$
\tau_{a}:=\inf\{k> k_0:\Lpnorm{d_{k}}\ge a\}.	
$
Suppose first that $\tau_a<\infty$ for all $a>\Lpnorm{d_{k_0}}$. By \eqref{prop:lp:bound:beta}, \eqref{prop:lp:bound:eq1} and 
the definition of $\tau_a$, we have 
\begin{eqnarray}
a^2\le\Lpnorm{d_{\tau_a}}^2\le \Lpnorm{d_{k_0}}^2+
\mathsf{B}_p(x^*)\sqrt{\sum_{i=k_0}^{\tau_a-1}\frac{1+a^2+a^4}{\mathcal{N}_i}}+&&\label{prop:lp:bound:eq4}\\
+\mathsf{D}_p(x^*)\sum_{i=k_0}^{\tau_a-1}\frac{a^2+1}{\mathcal{N}_i}+
\mathsf{D}_p(x^*)^2\sum_{i=k_0}^{\tau_a-1}\frac{a^2+1}{\mathcal{N}_i^2}&&\nonumber\\
\le \Lpnorm{d_{k_0}}^2+\mathsf{B}_p(x^*)\sqrt{\gamma}\left(1+a+a^2\right)
+\mathsf{D}_p(x^*)\gamma\left(1+a^2\right)+
\mathsf{D}_p(x^*)^2\gamma^2\left(1+a^2\right).&&\nonumber
\end{eqnarray}
For $p=2$, $\mathsf{B}_2(x^*)=0$. By \eqref{prop:lp:bound:eq4}, taking $\beta:=\beta(x^*)\in(0,1)$ in \eqref{prop:lp:bound:beta}, we get
\begin{equation}\label{prop:lp:bound:eq5}
a^2\le\frac{\Lpnorm{d_{k_0}}^2+1}{1-\beta}.
\end{equation}
For $p\ge4$, by \eqref{prop:lp:bound:eq4}, taking $\beta:=\beta(x^*)$ in \eqref{prop:lp:bound:beta}, we obtain
$
\lambda a^2\le\Lpnorm{d_{k_0}}^2+a+1,
$
with $\lambda:=1-\beta$. It follows that
$$
\left(a-\frac{1}{2\lambda}\right)^2\le\frac{4\lambda\Lpnorm{d_{k_0}}^2+4\lambda+1}{4\lambda^2}
\Longrightarrow
a\le\frac{2\Lpnorm{d_{k_0}}+\sqrt{5}+1}{2\lambda}\le\frac{\Lpnorm{d_{k_0}}+2}{\lambda},
$$
and finally
\begin{equation}\label{prop:lp:bound:eq6}
a^2\le4\frac{\Lpnorm{d_{k_0}}^2+1}{(1-\beta)^2}.
\end{equation}
Since \eqref{prop:lp:bound:eq5}-\eqref{prop:lp:bound:eq6} hold for an arbitrary $a>\Lpnorm{d_{k_0}}$ and $\beta\in(0,1)$, it follows that
$
\sup_{k\ge k_0}\Lpnorm{d_k}^2\le \mathsf{c}_p(x^*)\left[1+\Lpnorm{d_{k_0}}^2\right],
$
with $\mathsf{c}_p(x^*)$ as in the statement of the proposition. This contradicts the initial assumption 
that $\tau_a<\infty$ for all $a>\Lpnorm{d_{k_0}}$. Hence there exists $\bar a>\Lpnorm{d_{k_0}}$ such 
that $\hat a:=\sup_{k\ge k_0}\Lpnorm{d_k}\le\bar a<\infty$ by the definition of $\tau_{\bar a}$. 
For any $k>k_0$, we use the fact that $\Lpnorm{d_{i}}\le\hat a$ for $k_0\le i<k$ in \eqref{prop:lp:bound:eq1}, obtaining
\begin{equation}\label{uuu}
\Lpnorm{d_k}^2\le \Lpnorm{d_{k_0}}^2+\mathsf{B}_p(x^*)\sqrt{\gamma}\left(1+\hat a+\hat a^2\right)
+\mathsf{D}_p(x^*)\gamma\left(1+\hat a^2\right)+
\mathsf{D}_p(x^*)^2\gamma^2\left(1+\hat a^2\right).
\end{equation}
The inequality \eqref{uuu} holds trivially for $k:=k_0$. Thus, after taking the supremum over $k\ge k_0$ in \eqref{uuu},  
we proceed as we did immediately after inequality \eqref{prop:lp:bound:eq4}, 
obtaining \eqref{prop:lp:bound:eq5} and \eqref{prop:lp:bound:eq6}, respectively for $p=2$ and $p\ge4$, 
but with $\hat a$ susbtituting for $a$. 
Using the definition of $\mathsf{c}_p(x^*)$, the claim follows.

ii): The proof line is the same as for the case $p=2$ in item (i), but summing \eqref{vvv} 
with the estimate \eqref{estimate:C:eq1}, 
which gives the following uniform estimate: for all $k\ge0$,
$
\mathsf{C}_k\left(\Pi\left(\bar x^k\right)\right)\mathcal{N}_k^{-1}\leq 
2c\hat\alpha^2 C_2^2 \sigma^2\mathcal{N}_k^{-1}\left(1+2\hat\alpha^2 C_2^2\sigma^2\mathcal{N}_k^{-1}\right).
$
We remark that we may replace $\beta(x^*)$ in \eqref{prop:lp:bound:beta} and \eqref{prop:lp:bound:eq5} 
by $\beta:=2c\hat\alpha^2 C_2^2\sigma^2+4c\hat\alpha^4 C_2^4\sigma^4$. 
In this case, the definition of $\phi$ and $k_0$ imply that $0<1-\phi-\phi^2\le1-\beta$.

iii): Given $k\in\mathbb{N}$, we take total expectation in \eqref{www} and sum from $0$ to $k$, obtaining
$$
\esp\left[\dist(x^{k+1},X^*)^2\right]\le\dist(x^0,X^*)^2+
\sum_{i=0}^k\frac{17C_2^2\hat\alpha^2\sigma^2}{N_{i,\min}}\le\dist(x^0,X^*)^2+
\sum_{i=0}^\infty\frac{17C_2^2\hat\alpha^2\sigma^2}{N_{i,\min}},
$$
and the claim follows.
\end{proof}
\begin{rem}\label{rem:prop:lp:bound}
\emph{In statement of Proposition \ref{prop:lp:bound}(i), for $p\ge2$, it is sufficient to 
set $\phi\in(0,\frac{\sqrt{5}-1}{2})$ and $k_0\in\mathbb{N}_0$ such that
$
\sum_{k\ge k_0}\mathcal{N}_k^{-1}\le\phi\mathsf{D}(x^*)^{-1}
$
to obtain 
$
\sup_{k\ge k_0}\esp[\Vert x^k-x^*\Vert^2]\le\frac{1+\esp[\Vert x^{k_0}-x^*\Vert^2]}{1-\phi-\phi^2}.
$}
\end{rem}

\quad

We now give explicit estimates on the convergence rate and oracle complexity. In the sequel 
we assume that the stepsize sequence is constant. Proposition 10.3.6 in \cite{facchinei} states that $\{r_a:a>0\}$ is a 
family of equivalent merit functions of VI$(T,X)$. Hence, the convergence rate analysis can be deduced for 
varying stepsizes satisfying Assumption \ref{extra.cte.step}, and constant stepsizes are assumed just for 
simplicity. Recall definition of $\mathsf{D}(x^*)$ in \eqref{def:D}. Define, for $k,\ell\in\mathbb{N}_0\cup\{\infty\}$, $\phi\in\re$, 
$x^*\in X^*$ and $\alpha\in\left(0,1/\sqrt{6}L\right)$,
\begin{eqnarray}
\rho:=1-6\alpha^2L^2,\quad\quad\mathsf{a}_0^k:=\sum_{i=0}^k\frac{1}{\mathcal{N}_i},\quad\quad
\mathsf{b}_0^k:=\sum_{i=0}^k\frac{1}{\mathcal{N}_i^2},\label{def:ak:bk}\\
\mathsf{J}(x^*,k,\phi):=\frac{1+\max_{0\le i\le k}\esp[\Vert x^{i}-x^*\Vert^2]}{1-\phi-\phi^2},\label{def:Jx}\\
\mathsf{Q}_k(x^*,\ell,\phi):=\frac{2}{\rho}\left\{\Vert x^0-x^*\Vert^2+
\left[1+\mathsf{J}(x^*,\ell,\phi)\right]\left[\mathsf{D}(x^*)\mathsf{a}^k_0+\mathsf{D}(x^*)^2\mathsf{b}^k_0\right]\right\}.\label{def:Qx}
\end{eqnarray}

\begin{thm}[Convergence rate: non-uniform variance]
\label{thm:convergence:rate}
Consider Assumptions \ref{existence}-\ref{extra.cte.control}(i) for some $x^*\in X^*$ and
take $\alpha_k\equiv\alpha\in(0,1/\sqrt{6}L)$. Choose $\phi\in(0,\frac{\sqrt{5}-1}{2})$ and $k_0:=k_0(x^*)\in{\mathbb{N}}$ such that: 
\begin{equation}\label{ee23}
\sum_{k\ge k_0}\frac{1}{\mathcal{N}_k}\le\frac{\phi}{\mathsf{D}(x^*)},
\end{equation} 
where $\mathsf{D}(x^*)$ is defined in \eqref{def:D}. Then for all $\epsilon>0$ there exists $K_\epsilon\in\mathbb{N}$ such that
$$
\esp[r_\alpha(x^{K_\epsilon})^2]\le\epsilon
\le \frac{\mathsf{Q}_\infty(x^*,k_0(x^*),\phi)}{K_\epsilon}.
$$

Additionally, if Assumption \ref{extra.cte.control}(ii) holds then $K_\epsilon$ is independent of $x^*\in X^*$.
\end{thm}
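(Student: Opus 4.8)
The plan is to turn the stochastic quasi-Fej\'er inequality of Proposition~\ref{extra.cte.fejer} into a purely deterministic recursion for the sequences $\esp[\|x^k-x^*\|^2]$ and $\esp[r_\alpha(x^k)^2]$, telescope it, and then read off the rate by an averaging (pigeonhole) argument. The first step is to record a uniform $L^2$-bound on the iterates. Since $\alpha_k\equiv\alpha$ we have $\hat\alpha=\alpha$ and $\rho_k\equiv\rho:=1-6\alpha^2L^2>0$, and condition \eqref{ee23} is precisely the hypothesis of Remark~\ref{rem:prop:lp:bound} (with $p=2$), so $\sup_{k\ge k_0}\esp[\|x^k-x^*\|^2]\le\big(1+\esp[\|x^{k_0}-x^*\|^2]\big)/(1-\phi-\phi^2)$. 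For $k\le k_0$ one bounds $\esp[\|x^k-x^*\|^2]$ by $\max_{0\le i\le k_0}\esp[\|x^i-x^*\|^2]$, which, since $1-\phi-\phi^2<1$, is again at most $\mathsf{J}(x^*,k_0,\phi)$ as defined in \eqref{def:Jx}; hence $\sup_{k\ge0}\esp[\|x^k-x^*\|^2]\le\mathsf{J}(x^*,k_0,\phi)$.

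Next I would take total expectation in Proposition~\ref{extra.cte.fejer} in the case of Assumption~\ref{extra.cte.control}(i) (so $\mathcal{I}=1$ and $\mathcal{N}'_k=\mathcal{N}_k$), isolate $\tfrac{\rho}{2}\esp[r_\alpha(x^k)^2]$, and sum over $k=0,\dots,K-1$. The quadratic terms telescope, leaving at most $\|x^0-x^*\|^2$ because $x^0$ is deterministic and $\esp[\|x^K-x^*\|^2]\ge0$. In the noise terms I would insert $\esp[\|x^k-x^*\|^2]+1\le 1+\mathsf{J}(x^*,k_0,\phi)$ and use estimate \eqref{estimate:C:eq1} together with $\mathsf{A}\le2$ and $c>1$ to get $\mathsf{C}_k(x^*)/\mathcal{N}_k\le \mathsf{D}(x^*)/\mathcal{N}_k+\mathsf{D}(x^*)^2/\mathcal{N}_k^2$, with $\mathsf{D}(x^*)$ as in \eqref{def:D}. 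Summing and recalling the definitions $\mathsf{a}_0^k=\sum_{i\le k}\mathcal{N}_i^{-1}$, $\mathsf{b}_0^k=\sum_{i\le k}\mathcal{N}_i^{-2}$ from \eqref{def:ak:bk} then yields $\sum_{k=0}^{K-1}\esp[r_\alpha(x^k)^2]\le\mathsf{Q}_{K-1}(x^*,k_0,\phi)$. Since $\mathsf{a}_0^k$ and $\mathsf{b}_0^k$ are non-decreasing and convergent by Assumption~\ref{extra.cte.samples}, letting $K\to\infty$ gives $\sum_{k=0}^\infty\esp[r_\alpha(x^k)^2]\le\mathsf{Q}_\infty(x^*,k_0(x^*),\phi)=:Q<\infty$.

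The rate then follows by a standard averaging argument: since $\sum_{k\ge1}\esp[r_\alpha(x^k)^2]\le Q$, for any $\epsilon>0$ and $K$ with $Q/K\le\epsilon$ the minimum of $\esp[r_\alpha(x^k)^2]$ over $k\in\{1,\dots,K\}$ is $\le\epsilon$, so one can select an index $K_\epsilon$ of order $Q/\epsilon$ with $\esp[r_\alpha(x^{K_\epsilon})^2]\le\epsilon$ and $K_\epsilon\le Q/\epsilon$, i.e. $\esp[r_\alpha(x^{K_\epsilon})^2]\le\epsilon\le Q/K_\epsilon=\mathsf{Q}_\infty(x^*,k_0(x^*),\phi)/K_\epsilon$. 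For the last assertion, note that under Assumption~\ref{extra.cte.control}(ii) the inequality in Assumption~\ref{extra.cte.control}(i) holds at \emph{every} $x^*\in X^*$, so the two preceding paragraphs run verbatim for each $x^*$ (each with its own $k_0(x^*)$), giving $\sum_{k=0}^\infty\esp[r_\alpha(x^k)^2]\le\mathsf{Q}_\infty(x^*,k_0(x^*),\phi)$ for all $x^*\in X^*$; since the left-hand side and the choice $K_\epsilon:=\min\{k\ge1:\esp[r_\alpha(x^k)^2]\le\epsilon\}$ do not reference $x^*$, the inequality $\epsilon\le\mathsf{Q}_\infty(x^*,k_0(x^*),\phi)/K_\epsilon$ holds simultaneously for every such $x^*$; alternatively one may replace $\|x^k-x^*\|$ throughout by $\dist(x^k,X^*)$ via Proposition~\ref{prop:telescopeandmartingale}(ii).

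I do not expect a genuine analytic obstacle: the theorem is essentially an assembly of Proposition~\ref{extra.cte.fejer}, the $L^2$-boundedness of Proposition~\ref{prop:lp:bound}/Remark~\ref{rem:prop:lp:bound}, and a telescoping sum. The step that requires the most care — and the reason the specific choice of $k_0$ in \eqref{ee23} is imposed — is establishing summability of $\esp[r_\alpha(x^k)^2]$ with the \emph{explicit} constant $\mathsf{Q}_\infty$: this hinges on the uniform $L^2$-bound and on matching the numerical constants of \eqref{estimate:C:eq1} to the definitions of $\mathsf{D}(x^*)$, $\mathsf{a}_0^k$, $\mathsf{b}_0^k$ and $\mathsf{Q}_k$. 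Everything downstream (the pigeonhole extraction of $K_\epsilon$ and the $x^*$-uniformity under Assumption~\ref{extra.cte.control}(ii)) is bookkeeping.
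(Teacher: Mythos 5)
Your proposal is correct and follows essentially the same route as the paper: the uniform $L^2$-bound from Proposition \ref{prop:lp:bound}(i)/Remark \ref{rem:prop:lp:bound}, total expectation and telescoping of Proposition \ref{extra.cte.fejer} with the estimate \eqref{estimate:C:eq1}, and extraction of $K_\epsilon$ as the first index with $\esp[r_\alpha(x^{K_\epsilon})^2]\le\epsilon$ (the paper's finiteness-by-contradiction step is just your pigeonhole argument phrased differently). The treatment of the $x^*$-uniformity under Assumption \ref{extra.cte.control}(ii) also matches the paper's.
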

\begin{proof}
First note that finiteness of $\mathsf{a}^\infty_0,\mathsf{b}^\infty_0$ as defined before the statement of the theorem follows from Assumption \ref{extra.cte.samples}, which also ensures existence of $k_0(x^*)$ satisfying \eqref{ee23}, because $\sum_{i\ge k}\mathcal{N}_i^{-1}\rightarrow0$ as $k\rightarrow\infty$. Set $\phi$ and $k_0:=k_0(x^*)$ such that \eqref{ee23} holds. Proposition \ref{prop:lp:bound}(i) for $p=2$ and Remark \ref{rem:prop:lp:bound} imply 
$$
\sup_{k\ge k_0}\esp[\Vert x^k-x^*\Vert^2]\le\frac{1+\esp[\Vert x^{k_0}-x^*\Vert^2]}{1-\phi-\phi^2}\le\frac{1+\max_{0\le k\le k_0}\esp[\Vert x^{k}-x^*\Vert^2]}{1-\phi-\phi^2}=\mathsf{J}(x^*,k_0,\phi).
$$
From the above inequality and $1-\phi-\phi^2\in(0,1)$ we get the following uniform bound:
\begin{equation}\label{thm:conv:rate:eq0}
\sup_{k\ge0}\esp[\Vert x^k-x^*\Vert^2]\le \mathsf{J}(x^*,k_0(x^*),\phi). 
\end{equation}

We now invoke Proposition \ref{extra.cte.fejer}. Given $0\le i\le k$, we take total expectation in \eqref{vvv}, 
with the estimate \eqref{estimate:C:eq1} using $\mathsf{A}\le2$, $c>1$ defined in Remark \ref{estimate.C},  
the definition of $\hat\alpha$ in Assumption \ref{extra.cte.step} and the definition of $\mathsf{D}(x^*)$ in \eqref{def:D}. 
We then sum iteratively with $i$ running from $0$ to $k$, obtaining:
\begin{eqnarray}
&&\frac{\rho}{2}\sum_{i=0}^{k}\esp[r_\alpha(x^i)^2]\nonumber\\
&&\le\Vert x^0-x^*\|^2+\mathsf{D}(x^*)\sum_{i=0}^{k} \frac{1+\esp\left[\|x^i-x^*\|^2\right]}{\mathcal{N}_i}+\mathsf{D}(x^*)^2\sum_{i=0}^{k} \frac{1+\esp\left[\|x^i-x^*\|^2\right]}{\mathcal{N}_i^2}\nonumber\\
&&\le\Vert x^0-x^*\Vert^2+\left(1+\sup_{0\le i\le k}\esp[\Vert x^i-x^*\Vert^2]\right)
\left(\mathsf{D}(x^*)\sum_{i=0}^{k}\frac{1}{\mathcal{N}_i}+
\mathsf{D}(x^*)^2\sum_{i=0}^{k}\frac{1}{\mathcal{N}_i^2}\right)\nonumber\\
&&\le\Vert x^0-x^*\Vert^2+\left[1+\mathsf{J}(x^*,k_0(x^*),\phi)\right]\left[\mathsf{D}(x^*)\mathsf{a}^k_0+
\mathsf{D}(x^*)^2\mathsf{b}^k_0\right]=\frac{\rho}{2}\mathsf{Q}_k(x^*,k_0(x^*),\phi),\nonumber\\
&&\label{thm:conv:rate:eq1}
\end{eqnarray}
using \eqref{thm:conv:rate:eq0} in the last inequality.

Given $\epsilon>0$, define $K=K_\epsilon:=\inf\{k\in\mathbb{N}_0:\esp[r_\alpha(x^k)^2]\le\epsilon\}$. For every $k<K$ we have
\begin{equation}\label{thm:conv:rate:eq2}
\frac{\rho}{2}\epsilon(k+1)<\frac{\rho}{2}\sum_{i=0}^{k}\esp[r_\alpha(x^i)^2],	
\end{equation}
using the fact that $\esp[r_\alpha(x^i)^2]>\epsilon$ for all $0\le i\le k$, which follows from the definition of $K$.

We claim that $K$ is finite. Indeed, if $K=\infty$, then \eqref{thm:conv:rate:eq1} and \eqref{thm:conv:rate:eq2} 
hold for all $k\in\mathbb{N}$. 
Hence, we arrive at a contradiction by letting $k\rightarrow\infty$ and using the facts that $\mathsf{a}_0^\infty<\infty$ 
and $\mathsf{b}_0^\infty<\infty$, which hold by Assumption \ref{extra.cte.samples}. Since $K$ is finite, 
we have $\esp[r_\alpha(x^K)^2]\le\epsilon$ by definition. Setting $k:=M-1$ in \eqref{thm:conv:rate:eq1}-\eqref{thm:conv:rate:eq2}, we get	
$
K\le\frac{\mathsf{Q}_{K-1}(x^*,k_0,\phi)}{\epsilon}\le
\frac{\mathsf{Q}_{\infty}(x^*,k_0,\phi)}{\epsilon},
$
using the definition of $\mathsf{Q}_k(x^*,k_0,\phi)$. We have thus proved the claim. 
Under Assumption \ref{extra.cte.control}(ii), the proof is valid for any $x^*\in X^*$ and, hence, $K$ is independent of $x^*\in X^*$. 
\end{proof}

\quad

In the previous theorem, given $x^*\in X^*$, the constant $\mathsf{Q}_\infty(x^*,k_0(x^*),\phi)$ in 
the convergence rate depends on the variance $\sigma(x^*)^2$ and on the distance of the $k_0(x^*)$ initial iterates to $x^*$, 
where $k_0(x^*)$ and $\phi$ are chosen such that \eqref{ee23} is satisfied. Under Assumption \ref{extra.cte.control}(ii), 
since $K_\epsilon$ does 
not depend on $x^*\in X^*$, we get indeed the uniform estimate:
\begin{equation}\label{yyy}
\sup_{\epsilon>0}\epsilon K_\epsilon\le\inf_{x^*\in X^*} \mathsf{Q}_\infty(x^*,k_0(x^*),\phi).
\end{equation}
In view of \eqref{yyy} , the performance of method \eqref{algorithm.extra.cte1}-\eqref{algorithm.extra.cte2} under non-uniform variance 
depends on the $x^*\in X^*$ such that $\mathsf{Q}_\infty(x^*,k_0(x^*),\phi)$ is minimal.

\quad

\begin{prop}[Rate and oracle complexity for $m=1$: non-uniform variance]\label{pp1} 
Consider Assumptions \ref{existence}-\ref{extra.cte.control}(i) for some $x^*\in X^*$ and
take $\alpha_k\equiv\alpha\in(0,1/\sqrt{6}L)$. Define ${\mathcal{N}}_k$ as
\begin{equation}\label{rem:conv:sample:rate}
\mathcal{N}_k=\left\lceil\theta(k+\mu)(\ln (k+\mu))^{1+b}\right\rceil
\end{equation}
for any $\theta>0$, $b>0$, $\epsilon>0$ and $2<\mu\leq\epsilon^{-1}$. Choose $\phi\in(0,\frac{\sqrt{5}-1}{2})$ and let $k_0(x^*)$ be the minimum natural number satisfying 
\begin{equation}\label{rem:conv:k0}
k_0(x^*)\ge\exp\left[\left(\frac{2cC_2^2\hat\alpha^2\sigma(x^*)^2}{\phi b\theta}\right)^{1/b}\right]-\mu+1.
\end{equation}

Then Theorem \ref{extra.cte.convergence} holds and there are non-negative constants $\mathsf{\overline Q}(x^*)$, $\mathsf{P}(x^*)$ and $\mathsf{I}(x^*)$ depending on $x^*$, $k_0(x^*)$ and $\phi$ such that for all $\epsilon>0$, there exists $K:=K_\epsilon\in\mathbb{N}$ such that
\begin{eqnarray}
\esp[r_\alpha(x^{K})^2]\le\epsilon \le 
\frac{\max\{1,\theta^{-2}\}\mathsf{\overline Q}(x^*)}{K},\label{prop:conv:nonunif:rate}\\ 
\sum_{k=1}^{K} 2\mathcal{N}_k \le  
\frac{\max\{1,\theta^{-4}\}\max\{1,\theta\}\mathsf{I}(x^*)\left\{\left[\ln\left(\mathsf{P}(x^*)\epsilon^{-1}\right)\right]^{1+b}+\frac{1}{\mu}\right\}}{\epsilon^2}.\label{prop:conv:nonunif:oracle}
\end{eqnarray}
\end{prop}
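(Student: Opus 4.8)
The plan is to obtain everything by specializing Theorem~\ref{extra.cte.convergence} and Theorem~\ref{thm:convergence:rate} to the explicit sample rate \eqref{rem:conv:sample:rate}, doing no new analysis but keeping careful track of the dependence on $\theta$, $\mu$ and $x^*$. First I would verify the standing hypotheses. Since $b>0$, the series $\sum_k\big[(k+\mu)(\ln(k+\mu))^{1+b}\big]^{-1}$ converges, so $\mathcal{N}_k\ge\theta(k+\mu)(\ln(k+\mu))^{1+b}$ gives $\sum_k\mathcal{N}_k^{-1}<\infty$; as $m=1$ this is precisely Assumption~\ref{extra.cte.samples}. Together with Assumption~\ref{extra.cte.step} (which holds because $\alpha\in(0,1/\sqrt6L)$), the i.i.d.\ sampling prescribed by the algorithm (Assumption~\ref{extra.cte.unbias}) and the hypotheses assumed in the statement, all of Assumptions~\ref{existence}--\ref{extra.cte.control}(i) are in force, so Theorem~\ref{extra.cte.convergence} applies verbatim; moreover $\mathsf{a}_0^\infty=\sum_i\mathcal{N}_i^{-1}$ and $\mathsf{b}_0^\infty=\sum_i\mathcal{N}_i^{-2}$ of \eqref{def:ak:bk} are finite.

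Next I would show that the choice \eqref{rem:conv:k0} forces condition \eqref{ee23} of Theorem~\ref{thm:convergence:rate}. Since $\mu>2$ and $k_0\ge1$ give $k_0-1+\mu>e$, the map $t\mapsto\big[(t+\mu)(\ln(t+\mu))^{1+b}\big]^{-1}$ is decreasing on $[k_0-1,\infty)$, and comparing the tail with the integral (substituting $u=\ln(t+\mu)$) gives
\[
\sum_{k\ge k_0}\frac{1}{\mathcal{N}_k}\le\frac1\theta\int_{k_0-1}^{\infty}\frac{dt}{(t+\mu)(\ln(t+\mu))^{1+b}}=\frac{1}{\theta b\,(\ln(k_0-1+\mu))^{b}}.
\]
By \eqref{rem:conv:k0}, $\ln(k_0-1+\mu)\ge\big(2cC_2^2\hat\alpha^2\sigma(x^*)^2/(\phi b\theta)\big)^{1/b}$, so the right-hand side is at most $\phi/\big(2cC_2^2\hat\alpha^2\sigma(x^*)^2\big)=\phi/\mathsf{D}(x^*)$ by \eqref{def:D}, which is exactly \eqref{ee23}. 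Hence Theorem~\ref{thm:convergence:rate} (with this $\phi\in(0,\tfrac{\sqrt5-1}{2})$ and $k_0(x^*)$) yields, for every $\epsilon>0$, an index $K=K_\epsilon$ with $\esp[r_\alpha(x^{K})^2]\le\epsilon\le\mathsf{Q}_\infty(x^*,k_0(x^*),\phi)/K$.

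It then remains to make $\mathsf{Q}_\infty(x^*,k_0(x^*),\phi)$ explicit via \eqref{def:Qx}. Using $\mathcal{N}_i\ge\theta(i+\mu)(\ln(i+\mu))^{1+b}$ one has $\mathsf{a}_0^\infty\le S_1/\theta$ and $\mathsf{b}_0^\infty\le S_2/\theta^2$, where $S_1:=\sum_{i\ge0}\big[(i+\mu)(\ln(i+\mu))^{1+b}\big]^{-1}$ and $S_2:=\sum_{i\ge0}\big[(i+\mu)(\ln(i+\mu))^{1+b}\big]^{-2}$ are non-increasing in $\mu$, hence bounded for $\mu>2$. Therefore $\mathsf{D}(x^*)\mathsf{a}_0^\infty+\mathsf{D}(x^*)^2\mathsf{b}_0^\infty\le\max\{1,\theta^{-2}\}\big(\mathsf{D}(x^*)S_1+\mathsf{D}(x^*)^2S_2\big)$, and with
\[
\mathsf{\overline Q}(x^*):=\frac2\rho\Big\{\|x^0-x^*\|^2+\big[1+\mathsf{J}(x^*,k_0(x^*),\phi)\big]\big(\mathsf{D}(x^*)S_1+\mathsf{D}(x^*)^2S_2\big)\Big\}
\]
we get $\mathsf{Q}_\infty(x^*,k_0(x^*),\phi)\le\max\{1,\theta^{-2}\}\,\mathsf{\overline Q}(x^*)$, which gives \eqref{prop:conv:nonunif:rate}; here $\mathsf{\overline Q}(x^*)$ depends only on $x^*$, $k_0(x^*)$ and $\phi$ (and the fixed problem data). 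For the oracle bound \eqref{prop:conv:nonunif:oracle}, note that $k\ge1$, $\mu>2$ force $(k+\mu)(\ln(k+\mu))^{1+b}\ge1$, so $\mathcal{N}_k\le\theta(k+\mu)(\ln(k+\mu))^{1+b}+1$ and $\sum_{k=1}^K2\mathcal{N}_k\le2\theta\sum_{k=1}^K(k+\mu)(\ln(k+\mu))^{1+b}+2K$; bounding the sum by $\int_1^{K+1}(t+\mu)(\ln(t+\mu))^{1+b}\,dt\lesssim(K+1+\mu)^2(\ln(K+1+\mu))^{1+b}$ and inserting $K\le\max\{1,\theta^{-2}\}\mathsf{\overline Q}(x^*)/\epsilon$ together with $\mu\le\epsilon^{-1}$ (so $K+1+\mu\le\max\{1,\theta^{-2}\}\mathsf{P}(x^*)/\epsilon$ for a suitable $\mathsf{P}(x^*)$, and $2K\le2\max\{1,\theta^{-2}\}\mathsf{\overline Q}(x^*)/(\mu\epsilon^2)$ since $\mu\le\epsilon^{-1}$) produces the $\max\{1,\theta^{-4}\}\max\{1,\theta\}\,[\ln(\mathsf{P}(x^*)\epsilon^{-1})]^{1+b}/\epsilon^2$ leading term and the $\mu^{-1}/\epsilon^2$ correction; collecting constants into $\mathsf{I}(x^*)$ (and absorbing the residual, at most logarithmic, $\theta$-dependence into $\mathsf{I}(x^*)$ and $\mathsf{P}(x^*)$) yields \eqref{prop:conv:nonunif:oracle}.

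Everything here is elementary once the two theorems are in place, so there is no single deep step. The two places that need genuine care are the tail estimate of the second paragraph — matching the integral bound exactly to the exponential threshold \eqref{rem:conv:k0} while correctly handling the ceiling in \eqref{rem:conv:sample:rate} and the degenerate case $k_0(x^*)=1$ — and the last step, where one must isolate the clean polynomial factor $\max\{1,\theta^{-4}\}\max\{1,\theta\}$ and check that all remaining $\theta$-, $\mu$- and trajectory-dependence can legitimately be hidden inside the constants $\mathsf{\overline Q}(x^*)$, $\mathsf{P}(x^*)$, $\mathsf{I}(x^*)$, which is exactly what makes the scaling (robustness) statement meaningful.
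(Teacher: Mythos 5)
Your proposal is correct and follows essentially the same route as the paper's proof: verify Assumption \ref{extra.cte.samples}, use the integral comparison $\sum_{k\ge k_0}\mathcal{N}_k^{-1}\le\theta^{-1}b^{-1}(\ln(k_0-1+\mu))^{-b}$ to match \eqref{rem:conv:k0} against \eqref{ee23}, invoke Theorem \ref{thm:convergence:rate}, bound $\mathsf{D}(x^*)\mathsf{a}_0^\infty+\mathsf{D}(x^*)^2\mathsf{b}_0^\infty$ to extract the $\max\{1,\theta^{-2}\}$ factor, and then bound $\sum_{k=1}^K 2\mathcal{N}_k$ via $K\le\mathsf{Q}_\infty(x^*)/\epsilon$ and $\mu\le\epsilon^{-1}$. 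The only (immaterial) differences are that you keep the tail sums $S_1,S_2$ where the paper evaluates the corresponding integrals explicitly as $\mathcal{A}_{\mu,b},\mathcal{B}_{\mu,b}$, and you integral-bound the partial sum in the complexity step where the paper uses a direct term-by-term estimate.
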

\begin{proof}
For $\phi\in(0,\frac{\sqrt{5}-1}{2})$, we want $k_0:=k_0(x^*)$ to satisfy \eqref{ee23} of Theorem \ref{thm:convergence:rate}. We have
\begin{eqnarray}
\sum_{k\ge k_0}\frac{1}{\mathcal{N}_k}&\le &\theta^{-1}\sum_{k\ge k_0}\frac{1}{(k+\mu)(\ln (k+\mu))^{1+b}}\nonumber\\
&\le &\theta^{-1}\int_{k_0-1}^\infty\frac{dt}{(t+\mu)(\ln (t+\mu))^{1+b}}\nonumber\\
&=&\frac{\theta^{-1}}{b(\ln (k_0-1+\mu))^{b}}.\label{prop:pp1:k0}
\end{eqnarray}
From \eqref{prop:pp1:k0} and \eqref{ee23}, it is enough to choose $k_0$ as the minimum natural number such that the 
RHS of \eqref{prop:pp1:k0} is less than $\phi/\mathsf{D}(x^*)$. Using the definition of $\mathsf{D}(x^*)$ 
in \eqref{def:D}, it is enough to choose $k_0$ as in \eqref{rem:conv:k0}.
	
We now give an estimate of $\mathsf{Q}_\infty(x^*,k_0,\phi)$ as defined in \eqref{def:Qx}. Set $\lambda:=2c\hat\alpha^2C_2^2$ with $c$ as defined in Remark \ref{estimate.C}. From the definitions \eqref{def:D} and \eqref{def:ak:bk}, we have the bound
\begin{eqnarray}
\mathsf{D}(x^*)\mathsf{a}^\infty_0+
\mathsf{D}(x^*)^2\mathsf{b}^\infty_0& \le &
\int_{-1}^\infty\frac{\lambda\theta^{-1}\sigma(x^*)^2dt}{(t+\mu)(\ln (t+\mu))^{1+b}}+\label{rem:bound:E}	\\
+\int_{-1}^\infty\frac{\lambda^2\theta^{-2}\sigma(x^*)^4dt}{(t+\mu)^2(\ln (t+\mu))^{2+2b}}
&\le &\frac{\lambda\theta^{-1}\sigma(x^*)^2}{b(\ln(\mu-1))^b}+\frac{\lambda^2\theta^{-2}\sigma(x^*)^4}{(\mu-1) (1+2b)[\ln(\mu-1)]^{1+2b}}.\nonumber
\end{eqnarray}
Using Theorem \ref{thm:convergence:rate}, \eqref{rem:bound:E}, the definition of $\mathsf{Q}_\infty(x^*,k_0,\phi)$ as defined by \eqref{def:ak:bk}-\eqref{def:Qx}, we prove \eqref{prop:conv:nonunif:rate}, noting 
that $\mathsf{\overline Q}(x^*)$ is specified as in Remark \ref{rem:constants:rate:non:uniform}.

We now prove \eqref{prop:conv:nonunif:oracle}. Denoting $\mathsf{Q}_\infty(x^*):=\mathsf{Q}_\infty(x^*,k_0,\phi)$ 
and using $K:=K_\epsilon\le \mathsf{Q}_\infty(x^*)/\epsilon$, $\mu\epsilon\le1$ 
and $\mathcal{N}_k\le\theta(k+\mu)(\ln(k+\mu))^{1+b}+1$, we have
\begin{eqnarray}
\sum_{k=1}^{K} 2\mathcal{N}_k &\le &
\max\{\theta,1\}\sum_{k=1}^{K}2\left[(k+\mu)(\ln(k+\mu))^{1+b}+1\right]\nonumber\\
&\le &\max\{\theta,1\}K(K+2\mu)\left[(\ln (K+\mu))^{1+b}+\frac{2}{K+2\mu}\right]\nonumber\\
&\le &\max\{\theta,1\}\frac{\left\{\left[\ln\left(\mathsf{Q}_\infty(x^*)\epsilon^{-1}+
\epsilon^{-1}\right)\right]^{1+b}+\mu^{-1}\right\}\mathsf{Q}_\infty(x^*)\left(\mathsf{Q}_\infty(x^*)+2\right)}{\epsilon^2}.\label{zzz}
\end{eqnarray}
We now use \eqref{zzz} with $\mathsf{Q}_\infty(x^*)(\mathsf{Q}_\infty(x^*)+2)\le(\mathsf{Q}_\infty(x^*)+2)^2$, 
the definition of $\mathsf{Q}_\infty(x^*,k_0,\phi)$ as in \eqref{def:ak:bk}-\eqref{def:Qx}, 
the bound \eqref{rem:bound:E} and the fact that $(a+b+c)^2\le3(a^2+b^2+c^2)$ in order to prove \eqref{prop:conv:nonunif:oracle}, 
where $\mathsf{I}(x^*)$ and $\mathsf{P}(x^*)$ are given in Remark \ref{rem:constants:rate:non:uniform}.
\end{proof}
\begin{rem}[Constants]\label{rem:constants:rate:non:uniform}
\emph{
We make use of the following definitions for the sake of clarity:
\begin{eqnarray}
\mathcal{A}_{\mu,b}:=\frac{2c\hat\alpha^2 C_2^2}{b[\ln(\mu-1)]^b},\quad
\mathcal{B}_{\mu,b}:=\frac{(2c\hat\alpha^2 C_2^2)^2}{(\mu-1) (1+2b)[\ln(\mu-1)]^{1+2b}},\label{def:mathAB}\\
\mathsf{\overline Q}(d,A,J):=2\rho^{-1}d^2+2\rho^{-1}A\left(1+J\right),\label{def:Q:overline}\\
\mathsf{I}(d,A,J):=12\rho^{-2}d^4+12\rho^{-2}A^2\left(1+J\right)^2+1,\label{def:I}
\end{eqnarray}
using the definition of $c$ in Remark \ref{estimate.C} and \eqref{def:ak:bk}. Using the definitions \eqref{def:D}, \eqref{def:ak:bk}-\eqref{def:Qx} and \eqref{def:mathAB}-\eqref{def:I}, 
the constants in the statement of Proposition \ref{pp1} 
are given by 
\begin{eqnarray*}
\mathsf{\overline Q}(x^*)&:=&\mathsf{\overline Q}(\Vert x^0-x^*\Vert,\sigma(x^*)^2\mathcal{A}_{\mu,b}+
\sigma(x^*)^4\mathcal{B}_{\mu,b},\mathsf{J}(x^*,k_0(x^*),\phi)),\\
\mathsf{P}(x^*)&:=&\mathsf{Q}_\infty(x^*,k_0(x^*),\phi)+1,\\
\mathsf{I}(x^*)&:=&\mathsf{I}(\Vert x^0-x^*\Vert,\sigma(x^*)^2\mathcal{A}_{\mu,b}+\sigma(x^*)^4\mathcal{B}_{\mu,b},\mathsf{J}(x^*,k_0(x^*),\phi)).
\end{eqnarray*}
Given $\epsilon>0$, we may use the definitions of $\mathsf{\overline{Q}}(x^*)$, $\mathsf{I}(x^*)$ and $\mathsf{P}(x^*)$ and optimize the estimates given in \eqref{prop:conv:nonunif:rate}-\eqref{prop:conv:nonunif:oracle} over $(\hat\alpha,\theta)$, obtaining optimal constants in terms of $L$ and $\sigma(x^*)^2$. For simplicity we do not carry this procedure here.
}
\end{rem}

\quad

We give next sharper estimates in the case the variance is \emph{uniform} over $X^*$ or $X$. We state them without proofs 
since they follow the same proof line of Theorem \ref{thm:convergence:rate} and Proposition \ref{pp1}, 
but using Proposition \ref{prop:telescopeandmartingale}(ii) and Proposition \ref{prop:lp:bound}(ii), when 
the variance is uniform over $X^*$, and Proposition \ref{prop:telescopeandmartingale}(iii), when the variance is uniform over $X$. 
Define:
\begin{eqnarray}
\mathsf{D}_\sigma:=2c\hat\alpha^2 C_2^2\sigma^2,\label{def:D:unif:x*}\\
\mathsf{J}(\ell,\phi):=\frac{1+\max_{0\le k\le \ell}\esp[\dist(x^k,X^*)^2]}{1-\phi-\phi^2},\label{def:J:unif:x*}\\
\mathsf{Q}_k(\sigma,\ell,\phi):=2\rho^{-1}\left\{\dist(x^0,X^*)^2+\left(1+\mathsf{J}(\ell,\phi)\right)
\left(\mathsf{D}_\sigma\mathsf{a}_0^k+\mathsf{D}_\sigma^2\mathsf{b}_0^k\right)\right\},\label{def:Q:unif:x*}\\
\mathsf{\widetilde Q}_k(\sigma):=2\rho^{-1}\left\{\dist(x^0,X^*)^2+17C_2^2\hat\alpha^2\sigma^2
\sum_{i=0}^k\frac{1}{N_{i,\min}}\right\},\label{def:Q:unif}
\end{eqnarray}
using $c$ as defined in Remark \ref{estimate.C} and the definitions in \eqref{def:ak:bk}. 

\quad

\begin{thm}[Convergence rate: uniform variance]
\label{thm:convergence:rate:unif}
Consider Assumptions \ref{existence}-\ref{extra.cte.control} and take $\alpha_k\equiv\alpha\in(0,1/\sqrt{6}L)$.	

Suppose first that Assumption \ref{extra.cte.control}(ii) holds and $\sup_{x^*\in X^*}\sigma(x^*)\le\sigma$ 
for some $\sigma>0$. Take $\phi\in(0,\frac{\sqrt{5}-1}{2})$ and $k_0:=k_0(\sigma)\in\mathbb{N}$ such that 
$$
\sum_{k\ge k_0}\frac{1}{\mathcal{N}_k}\le\frac{\phi}{\mathsf{D}_\sigma}.
$$ 
Then, for all $\epsilon>0$, there exists $K_\epsilon\in\mathbb{N}$, satisfying
$$
\esp[r_\alpha\left(x^{K_\epsilon}\right)^2]\le\epsilon
\le \frac{\mathsf{Q}_\infty(\sigma,k_0(\sigma),\phi)}{K_\epsilon}.
$$

Suppose now that Assumption \ref{extra.cte.control}(iii) holds for some $\sigma>0$. 
Then, for all $\epsilon>0$, there exists $K_\epsilon\in\mathbb{N}$, satisfying
$$
\esp[r_\alpha\left(x^{K_\epsilon}\right)^2]\le\epsilon
\le\frac{\mathsf{\widetilde Q}_\infty(\sigma)}{K_\epsilon}.
$$
\end{thm}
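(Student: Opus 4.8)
The plan is to follow the proof of Theorem~\ref{thm:convergence:rate} almost verbatim, with $\dist(x^k,X^*)^2$ playing the role of $\|x^k-x^*\|^2$ and with the sharper, $x^*$-uniform Fej\'er relations of Proposition~\ref{prop:telescopeandmartingale}(ii)--(iii) and the $L^2$-boundedness of Proposition~\ref{prop:lp:bound}(ii)--(iii) replacing Proposition~\ref{extra.cte.fejer} and Proposition~\ref{prop:lp:bound}(i). Throughout, since $\alpha_k\equiv\alpha\in(0,1/\sqrt6 L)$ we have $\rho_k\equiv\rho=1-6\alpha^2L^2>0$, and I will use the stopping index $K_\epsilon:=\inf\{k\in\mathbb{N}_0:\esp[r_\alpha(x^k)^2]\le\epsilon\}$, whose finiteness is forced by the summability of the sampling-rate series in Assumption~\ref{extra.cte.samples}. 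I treat the two cases separately.

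\emph{Uniform variance over $X^*$.} Here I would first derive the uniform second-moment bound $\sup_{k\ge0}\esp[\dist(x^k,X^*)^2]\le\mathsf{J}(k_0(\sigma),\phi)$, with $\mathsf{J}$ as in \eqref{def:J:unif:x*}. Indeed, Proposition~\ref{prop:lp:bound}(ii), applied with the stated $\phi\in(0,\frac{\sqrt5-1}{2})$ and $k_0:=k_0(\sigma)$ satisfying $\sum_{k\ge k_0}\mathcal{N}_k^{-1}\le\phi/\mathsf{D}_\sigma$, gives $\sup_{k\ge k_0}\esp[\dist(x^k,X^*)^2]\le(1+\esp[\dist(x^{k_0},X^*)^2])/(1-\phi-\phi^2)$; bounding $\esp[\dist(x^{k_0},X^*)^2]\le\max_{0\le i\le k_0}\esp[\dist(x^i,X^*)^2]$ and using $1-\phi-\phi^2\in(0,1)$ absorbs the first $k_0$ iterates and yields the claimed uniform bound. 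Next I would take total expectation in the inequality of Proposition~\ref{prop:telescopeandmartingale}(ii), bound the additive term by $\mathsf{C}_k(\Pi_{X^*}(x^k))\mathcal{N}_k^{-1}\le \mathsf{D}_\sigma\mathcal{N}_k^{-1}+\mathsf{D}_\sigma^2\mathcal{N}_k^{-2}$ via \eqref{estimate:C:eq1} (now uniform in $x^*$ because $\sigma(x^*)\le\sigma$) and the definition \eqref{def:D:unif:x*} of $\mathsf{D}_\sigma$, and sum from $0$ to $k$. This produces, exactly as in \eqref{thm:conv:rate:eq1}, $\tfrac{\rho}{2}\sum_{i=0}^k\esp[r_\alpha(x^i)^2]\le\dist(x^0,X^*)^2+(1+\mathsf{J}(k_0(\sigma),\phi))(\mathsf{D}_\sigma\mathsf{a}_0^k+\mathsf{D}_\sigma^2\mathsf{b}_0^k)=\tfrac{\rho}{2}\mathsf{Q}_k(\sigma,k_0(\sigma),\phi)$. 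Since $\esp[r_\alpha(x^i)^2]>\epsilon$ for $i<K_\epsilon$, we get $\tfrac{\rho}{2}\epsilon(k+1)<\tfrac{\rho}{2}\mathsf{Q}_k(\sigma,k_0(\sigma),\phi)$ for such $k$; finiteness of $\mathsf{a}_0^\infty,\mathsf{b}_0^\infty$ (Assumption~\ref{extra.cte.samples}) rules out $K_\epsilon=\infty$, and taking $k=K_\epsilon-1$ gives $K_\epsilon\le\mathsf{Q}_{K_\epsilon-1}(\sigma,k_0(\sigma),\phi)/\epsilon\le\mathsf{Q}_\infty(\sigma,k_0(\sigma),\phi)/\epsilon$, i.e.\ $\epsilon\le\mathsf{Q}_\infty(\sigma,k_0(\sigma),\phi)/K_\epsilon$, while $\esp[r_\alpha(x^{K_\epsilon})^2]\le\epsilon$ by definition of $K_\epsilon$.

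\emph{Uniform variance over $X$.} This case is shorter, because the additive term in \eqref{www} (Proposition~\ref{prop:telescopeandmartingale}(iii)) is deterministic and independent of $\dist(x^k,X^*)^2$, so no preliminary $L^2$ bound is needed. I would take total expectation in \eqref{www} and sum from $0$ to $k$, obtaining $\tfrac{\rho}{2}\sum_{i=0}^k\esp[r_\alpha(x^i)^2]\le\dist(x^0,X^*)^2+17C_2^2\hat\alpha^2\sigma^2\sum_{i=0}^k N_{i,\min}^{-1}=\tfrac{\rho}{2}\mathsf{\widetilde Q}_k(\sigma)$, using \eqref{def:Q:unif}. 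Note that Assumption~\ref{extra.cte.samples}, in either of its two forms (which are equivalent, since $N_{k,\min}^{-1}\le\mathcal{N}_k^{-1}\le m\,N_{k,\min}^{-1}$), yields $\sum_k N_{k,\min}^{-1}<\infty$, so $\mathsf{\widetilde Q}_\infty(\sigma)<\infty$; the same stopping-index argument as above, with $\mathsf{\widetilde Q}_k(\sigma)$ in place of $\mathsf{Q}_k(\sigma,k_0(\sigma),\phi)$, then gives $K_\epsilon<\infty$ with $\epsilon\le\mathsf{\widetilde Q}_\infty(\sigma)/K_\epsilon$ and $\esp[r_\alpha(x^{K_\epsilon})^2]\le\epsilon$.

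The only step that is not a pure transcription of the proof of Theorem~\ref{thm:convergence:rate} is the passage, in the first case, from the pointwise estimate in \eqref{estimate:C:eq1} — in which the \emph{random} point $\Pi_{X^*}(x^k)$ appears inside $\mathsf{C}_k(\cdot)$ — to a genuinely deterministic bound; this is exactly where the hypothesis $\sup_{x^*\in X^*}\sigma(x^*)\le\sigma$ is used, to replace $\sigma(\Pi_{X^*}(x^k))$ by $\sigma$ and collapse the estimate into $\mathsf{D}_\sigma\mathcal{N}_k^{-1}+\mathsf{D}_\sigma^2\mathcal{N}_k^{-2}$ uniformly in $k$. Everything else — the summation, the stopping-index bookkeeping, and the appeal to Assumption~\ref{extra.cte.samples} for convergence of the relevant series — is identical to the proofs of Theorem~\ref{thm:convergence:rate} and Proposition~\ref{pp1}.
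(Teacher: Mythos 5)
Your proposal is correct and follows exactly the route the paper indicates: the authors omit the proof, stating that it repeats the argument of Theorem \ref{thm:convergence:rate} with Proposition \ref{prop:telescopeandmartingale}(ii) and Proposition \ref{prop:lp:bound}(ii) (uniform variance over $X^*$) or Proposition \ref{prop:telescopeandmartingale}(iii) (uniform variance over $X$) in place of Propositions \ref{extra.cte.fejer} and \ref{prop:lp:bound}(i), which is precisely what you do. Your observation that the hypothesis $\sup_{x^*\in X^*}\sigma(x^*)\le\sigma$ is what turns the random quantity $\mathsf{C}_k(\Pi_{X^*}(x^k))$ into the deterministic bound $\mathsf{D}_\sigma\mathcal{N}_k^{-1}+\mathsf{D}_\sigma^2\mathcal{N}_k^{-2}$ is exactly the one nontrivial point of the adaptation.
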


\begin{prop}[Rate and oracle complexity for $m=1$: uniform variance]\label{pp3}
Consider Assumptions \ref{existence}-\ref{extra.cte.control}, take $\alpha_k\equiv\alpha\in(0,1/\sqrt{6}L)$ and suppose $\sup_{x^*\in X^*}\sigma(x^*)\le\sigma$ for some $\sigma>0$. Define ${\mathcal{N}}_k$ as
$$
\mathcal{N}_k=\left\lceil\theta(k+\mu)(\ln (k+\mu))^{1+b}\right\rceil
$$
for any $\theta>0$, $b>0$, $\epsilon>0$ and $2<\mu\leq\epsilon^{-1}$. Then the following holds:
\begin{itemize}
\item[(i)] Suppose Assumption \ref{extra.cte.control}(ii) holds. Choose $\phi\in(0,\frac{\sqrt{5}-1}{2})$ 
and $k_0:=k_0(\sigma)\in\mathbb{N}$ such that
$$
k_0\ge\exp\left[\left(\frac{2cC_2^2\hat\alpha^2\sigma^2}{\phi b\theta}\right)^{1/b}\right]-\mu+1.
$$
Then Theorem \ref{extra.cte.convergence} holds and there exist 
non-negative constants $\mathsf{\overline Q}(\sigma)$, $\mathsf{P}(\sigma)$ and $\mathsf{I}(\sigma)$ depending on $\sigma$, $k_0(\sigma)$ and $\phi$ such that for all $\epsilon>0$, there exists $K:=K_\epsilon\in\mathbb{N}$ such that
\begin{eqnarray*}
\esp[r_\alpha(x^{K})^2]\le\epsilon \le 
\frac{\max\{1,\theta^{-2}\}\mathsf{\overline Q}(\sigma)}{K},\\ 
\sum_{k=1}^{K} 2\mathcal{N}_k \le  
\frac{\max\{1,\theta^{-4}\}\max\{1,\theta\}\mathsf{I}(\sigma)\left\{\left[\ln\left(\mathsf{P}(\sigma)\epsilon^{-1}\right)\right]^{1+b}
+\frac{1}{\mu}\right\}}{\epsilon^2}.
\end{eqnarray*}
\item[(ii)] Suppose that Assumption \ref{extra.cte.control}(iii) holds. Then Theorem \ref{extra.cte.convergence} 
holds and there exist non-negative constants $\mathsf{\widetilde Q}(\sigma)$, 
$\mathsf{\widetilde P}(\sigma)$ and $\mathsf{\widetilde I}(\sigma)$ depending on $\sigma$ such 
that for all $\epsilon>0$, there exists $K:=K_\epsilon\in\mathbb{N}$ such that
\begin{eqnarray*}
\esp[r_\alpha(x^{K})^2]\le\epsilon\le\frac{\max\{1,\theta^{-1}\}\mathsf{\widetilde Q}(\sigma)}{K_\epsilon},\\
\sum_{k=1}^{K}2\mathcal{N}_k\le
\frac{\max\{1,\theta^{-2}\}\max\{1,\theta\}\mathsf{\widetilde I}(\sigma)
\left\{\left[\ln\left(\mathsf{\widetilde P}(\sigma)\epsilon^{-1}\right)\right]^{1+b}+\frac{1}{\mu}\right\}}{\epsilon^2}.
\end{eqnarray*}
\end{itemize}
\end{prop}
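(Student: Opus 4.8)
The plan is to mirror, essentially line for line, the proofs of Theorem~\ref{thm:convergence:rate} and Proposition~\ref{pp1}, replacing the pointwise stochastic quasi-Fej\'er inequality of Proposition~\ref{extra.cte.fejer} by the sharper uniform ones already available under uniform variance: Proposition~\ref{prop:telescopeandmartingale}(ii) and Proposition~\ref{prop:lp:bound}(ii) when $\sup_{x^*\in X^*}\sigma(x^*)\le\sigma$ (Assumption~\ref{extra.cte.control}(ii), item (i) of the statement), and Proposition~\ref{prop:telescopeandmartingale}(iii) when the variance is uniform over $X$ (Assumption~\ref{extra.cte.control}(iii), item (ii) of the statement). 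In both cases the chosen sample rate $\mathcal{N}_k=\lceil\theta(k+\mu)(\ln(k+\mu))^{1+b}\rceil$ satisfies $\sum_k\mathcal{N}_k^{-1}<\infty$ for $b>0$ by the integral test, hence Assumption~\ref{extra.cte.samples} holds; this both makes Theorem~\ref{extra.cte.convergence} applicable (the asymptotic-convergence clause of the statement) and guarantees the existence of the indices $k_0$ used below. The quantitative part reduces to invoking Theorem~\ref{thm:convergence:rate:unif} and making its constants $\mathsf{Q}_\infty(\sigma,k_0(\sigma),\phi)$ resp.\ $\mathsf{\widetilde Q}_\infty(\sigma)$ explicit for the present sample rate, and then converting the iteration bound $K_\epsilon\le(\text{const})/\epsilon$ into the oracle bound $\sum_{k=1}^{K}2\mathcal{N}_k$.

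For item (i) I would first check the hypothesis $\sum_{k\ge k_0}\mathcal{N}_k^{-1}\le\phi/\mathsf{D}_\sigma$ of Theorem~\ref{thm:convergence:rate:unif}, with $\mathsf{D}_\sigma=2c\hat\alpha^2C_2^2\sigma^2$ as in \eqref{def:D:unif:x*}: this is exactly the computation \eqref{prop:pp1:k0}, bounding the tail by $\theta^{-1}\int_{k_0-1}^\infty dt/[(t+\mu)(\ln(t+\mu))^{1+b}]=\theta^{-1}/[b(\ln(k_0-1+\mu))^b]$, and requiring this to be $\le\phi/\mathsf{D}_\sigma$ forces precisely the stated lower bound on $k_0$ (the analogue of \eqref{rem:conv:k0} with $\sigma(x^*)$ replaced by $\sigma$). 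Next I would bound $\mathsf{D}_\sigma\mathsf{a}_0^\infty+\mathsf{D}_\sigma^2\mathsf{b}_0^\infty$ by the two integrals of \eqref{rem:bound:E}, now with the uniform $\sigma$, obtaining $\sigma^2\mathcal{A}_{\mu,b}+\sigma^4\mathcal{B}_{\mu,b}$ with $\mathcal{A}_{\mu,b},\mathcal{B}_{\mu,b}$ as in \eqref{def:mathAB}; feeding this into $\mathsf{Q}_\infty(\sigma,k_0(\sigma),\phi)$ (see \eqref{def:ak:bk},\,\eqref{def:D:unif:x*}--\eqref{def:Q:unif:x*}) and applying Theorem~\ref{thm:convergence:rate:unif} gives $\epsilon\le\mathsf{Q}_\infty(\sigma,k_0(\sigma),\phi)/K_\epsilon$. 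Since $\mathsf{a}_0^\infty\lesssim\theta^{-1}$ and $\mathsf{b}_0^\infty\lesssim\theta^{-2}$, the $\theta$-dependence factors out as $\max\{1,\theta^{-2}\}$, and the residual constant is exactly $\mathsf{\overline Q}(\sigma):=\mathsf{\overline Q}(\dist(x^0,X^*),\sigma^2\mathcal{A}_{\mu,b}+\sigma^4\mathcal{B}_{\mu,b},\mathsf{J}(k_0(\sigma),\phi))$ with $\mathsf{\overline Q}(\cdot,\cdot,\cdot)$ as in \eqref{def:Q:overline} and $\mathsf{J}$ as in \eqref{def:J:unif:x*}. For the oracle bound I would repeat the chain \eqref{zzz} using $K_\epsilon\le\mathsf{Q}_\infty(\sigma)/\epsilon$, $\mu\epsilon\le1$ and $\mathcal{N}_k\le\theta(k+\mu)(\ln(k+\mu))^{1+b}+1$:
\[
\sum_{k=1}^{K}2\mathcal{N}_k\le\max\{1,\theta\}\,K(K+2\mu)\!\left[(\ln(K+\mu))^{1+b}+\tfrac{2}{K+2\mu}\right]\le\max\{1,\theta\}\,\frac{(\mathsf{Q}_\infty(\sigma)+2)^2}{\epsilon^2}\!\left\{[\ln(\mathsf{P}(\sigma)\epsilon^{-1})]^{1+b}+\tfrac1\mu\right\},
\]
with $\mathsf{P}(\sigma):=\mathsf{Q}_\infty(\sigma,k_0(\sigma),\phi)+1$; expanding $(\mathsf{Q}_\infty(\sigma)+2)^2$ via $(a+b+c)^2\le3(a^2+b^2+c^2)$ (the three terms carry $1,\theta^{-2},\theta^{-4}$) produces the factor $\max\{1,\theta^{-4}\}\max\{1,\theta\}$ and the constant $\mathsf{I}(\sigma):=\mathsf{I}(\dist(x^0,X^*),\sigma^2\mathcal{A}_{\mu,b}+\sigma^4\mathcal{B}_{\mu,b},\mathsf{J}(k_0(\sigma),\phi))$ with $\mathsf{I}(\cdot,\cdot,\cdot)$ as in \eqref{def:I}.

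Item (ii) is simpler because the relevant Fej\'er relation \eqref{www} has no multiplicative error term, so no $k_0$ or $\phi$ enters: Theorem~\ref{thm:convergence:rate:unif} (second half) applies directly and gives $\epsilon\le\mathsf{\widetilde Q}_\infty(\sigma)/K_\epsilon$ with $\mathsf{\widetilde Q}_\infty(\sigma)$ from \eqref{def:Q:unif}; bounding $\sum_{i\ge0}N_{i,\min}^{-1}\le\theta^{-1}/[b(\ln(\mu-1))^b]$ by the integral test makes $\mathsf{\widetilde Q}_\infty(\sigma)$ explicit, and since only one power of $N_k^{-1}$ occurs the $\theta$-dependence is $\max\{1,\theta^{-1}\}$, defining $\mathsf{\widetilde Q}(\sigma)$. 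Repeating the complexity computation above with $\mathsf{\widetilde Q}_\infty(\sigma)$ and $\mathsf{\widetilde P}(\sigma):=\mathsf{\widetilde Q}_\infty(\sigma)+1$ in place of $\mathsf{Q}_\infty(\sigma),\mathsf{P}(\sigma)$ yields the last bound, with factor $\max\{1,\theta^{-2}\}\max\{1,\theta\}$ (the squared $\mathsf{\widetilde Q}$ only reaches $\theta^{-2}$) and constant $\mathsf{\widetilde I}(\sigma)$. I do not expect a genuine obstacle anywhere: the whole argument is the integral-test estimates of Proposition~\ref{pp1} plus the algebra already done there; the only point requiring care is tracking the $\theta$-exponents through the ceiling $\mathcal{N}_k=\lceil\theta(\cdots)\rceil$, so that $\theta(\cdots)\le\mathcal{N}_k\le\theta(\cdots)+1$, and keeping the $\dist(\cdot,X^*)$-versions of the constants of Remark~\ref{rem:constants:rate:non:uniform} consistent with \eqref{def:Q:unif:x*}--\eqref{def:Q:unif}.
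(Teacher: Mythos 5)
Your proposal is correct and is exactly the argument the paper intends: the paper explicitly omits the proof of Proposition \ref{pp3}, stating that it "follows the same proof line of Theorem \ref{thm:convergence:rate} and Proposition \ref{pp1}, but using Proposition \ref{prop:telescopeandmartingale}(ii) and Proposition \ref{prop:lp:bound}(ii) when the variance is uniform over $X^*$, and Proposition \ref{prop:telescopeandmartingale}(iii) when the variance is uniform over $X$," which is precisely the substitution you carry out. Your integral-test verification of the $k_0$ condition, the identification of the constants via \eqref{def:mathAB}--\eqref{def:I} and \eqref{def:D:unif:x*}--\eqref{def:Q:unif}, and the tracking of the $\theta$-exponents ($\max\{1,\theta^{-2}\}$ vs.\ $\max\{1,\theta^{-1}\}$ in the rate, $\max\{1,\theta^{-4}\}$ vs.\ $\max\{1,\theta^{-2}\}$ in the complexity) all agree with Remark \ref{rem:constants:rate:uniform}.
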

\begin{rem}[Constants]\label{rem:constants:rate:uniform}
\emph{
We recall the definitions \eqref{def:ak:bk}, \eqref{def:mathAB}-\eqref{def:I} and \eqref{def:D:unif:x*}-\eqref{def:Q:unif}. The constants in the statement of Proposition \ref{pp3}(i) are given 
by $\mathsf{\overline Q}(\sigma):=\mathsf{\overline Q}(\dist(x^0,X^*),\sigma^2\mathcal{A}_{\mu,b}+\sigma^4\mathcal{B}_{\mu,b},\mathsf{J}(k_0(\sigma),\phi))$, 
$\mathsf{P}(\sigma):=\mathsf{Q}_\infty(\sigma,k_0(\sigma),\phi)+1$ 
and $\mathsf{I}(\sigma)$ $:=\mathsf{I}(\dist(x^0,X^*),\sigma^2\mathcal{A}_{\mu,b}+\sigma^4\mathcal{B}_{\mu,b},\mathsf{J}(k_0(\sigma),\phi))$. 
For item (ii) they are given by 
\begin{eqnarray*}
\mathsf{\widetilde Q}(\sigma)&:=&2\rho^{-1}\dist(x^0,X^*)^2+2\rho^{-1}\frac{17C_2^2\hat\alpha^2\sigma^2}{b(\ln(\mu-1))^{b}},\label{def:Q:unif:infty}\\
\mathsf{\widetilde I}(\sigma)&:=&12\rho^{-2}\dist(x^0,X^*)^4+12\rho^{-2}\frac{17^2C_2^4\hat\alpha^4\sigma^4}{b^2(\ln(\mu-1))^{2b}}+1,\label{def:I:unif}
\end{eqnarray*}
and $\mathsf{\widetilde{P}}(\sigma):=\mathsf{\widetilde Q}_\infty(\sigma)+1$.
}
\end{rem}

\quad

We now turn our attention to the distributed solution of a Cartesian SVI for a large network ($m\gg1$). 
If a \emph{decentralized sampling} is used, then higher order factors of $m$ appear in rate and complexity. 
The next results shows that if, in addition, a deterministic and decreasing sequence of exponents 
$\{b_i\}_{i=1}^m$ and an approximate estimate of the network dimension $m$ is coordinated, then 
the oracle complexity is proportional to $m$ (up to a scaling in the sampling rate). 

\quad

\begin{prop}[Rate and oracle complexity for a network]\label{pp2}
Consider Assumptions \ref{existence}-\ref{extra.cte.control}(i) for some $x^*\in X^*$ and
take $\alpha_k\equiv\alpha\in(0,1/\sqrt{6}L)$. Under Assumptions \ref{existence}-\ref{extra.cte.control}(i) with Assumption \ref{extra.cte.unbias}(i) (centralized sampling), the results of Proposition \ref{pp1} hold. 

Consider now Assumption \ref{extra.cte.unbias}(ii) (decentralized sampling). Let $\{b_i\}_{i=1}^m$ be a positive sequence such that
\begin{eqnarray}
&& N_{k,i}=\left\lceil\theta_i(k+\mu_i)^{1+a}(\ln (k+\mu_i))^{1+b_i}\right\rceil,\label{rem:conv:sample:rate:m}\\
&& b_1\ge b_i+2\ln(i+1)-\ln\mathsf{S},
\label{rem:exponents}
\end{eqnarray}
for any $\theta_i>0$, $a>0$, $\mathsf{S}\ge1$, $\epsilon>0$, $2<\mu_i\le\epsilon^{-1}$. Choose $\phi\in(0,\frac{\sqrt{5}-1}{2})$ and let $k_0(x^*)$ be the minimum natural number greater than $e-\mu_{\min}+1$ such that
\begin{equation}
k_0(x^*)\ge\left[\frac{2cC_2^2\hat\alpha^2\sigma(x^*)^2 }{\phi\theta_{\min} b_{\min}}\right]^{1/a}-\mu_{\min}+1.\label{rem:conv:k0:m}
\end{equation}

Then Theorem \ref{extra.cte.convergence} holds and there exist 
non-negative constants $\mathsf{\widehat Q}(x^*)$, $\mathsf{\widehat P}(x^*)$ and $\mathsf{\widehat I}(x^*)$ 
depending on $x^*$, $k_0(x^*)$ and $\phi$ such that for all $\epsilon>0$, there exists $K:=K_\epsilon\in\mathbb{N}$ such that
\begin{eqnarray}
\esp[r_\alpha(x^{K})^2]\le\epsilon \le 
\frac{\mathsf{\widehat Q}(x^*)}{K},&&\label{rem:conv:rate:m}\\ 
\sum_{k=1}^{K} \sum_{i=1}^m2N_{k,i} \le  
\frac{\mathsf{S}\max\{\theta_{\max},1\}\left\{\ln\left(\mathsf{\widehat P}(x^*)\epsilon^{-1}\right)\right\}^{1+b_1}\mathsf{\widehat{I}}(x^*)}{\epsilon^{2+a}}.
\end{eqnarray}
(Above, the subscripts ``$\min$'' and ``$\max$'' refer, respectively, to the minimal and maximal terms of the corresponding sequences).
\end{prop}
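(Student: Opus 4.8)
The plan is to handle the two sampling regimes of Assumption~\ref{extra.cte.unbias} separately; the first is an immediate reduction to Proposition~\ref{pp1}, while the second carries the actual content.

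\emph{Centralized sampling.} Under Assumption~\ref{extra.cte.unbias}(i) one has $N_{k,i}\equiv N_k$ and the agents share the samples ($\xi^k_i\equiv\xi^k$, $\eta^k_i\equiv\eta^k$), so $\mathcal N_k=N_k/m$ and, by Lemma~\ref{lema:indsums}(3), $\mathsf A=2$ and $\mathsf B=1$ --- exactly the constants of the scalar case $m=1$. Inspecting the proof of Proposition~\ref{pp1}, one sees it uses the sampling only through $\{\mathcal N_k\}$ and only through the bound $\mathsf A\le 2$; hence, taking $\mathcal N_k$ as in \eqref{rem:conv:sample:rate} (equivalently $N_k:=m\,\mathcal N_k$), that proof applies verbatim, which is the first assertion.

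\emph{Decentralized sampling: boundedness and rate.} Here $\mathsf A=\mathsf B=2$ (Lemma~\ref{lema:indsums}, items~1--2). First I would fix $k_0$ as in \eqref{rem:conv:k0:m}; the extra requirement $k_0>e-\mu_{\min}+1$ ensures $\ln(k+\mu_i)>1$ for all $k\ge k_0$ and $i\in[m]$, so every log factor exceeds $1$ on the tail. Then I would estimate
\[
\sum_{k\ge k_0}\frac1{\mathcal N_k}=\sum_{i=1}^m\sum_{k\ge k_0}\frac1{N_{k,i}}
\le\sum_{i=1}^m\frac1{\theta_i}\int_{k_0-1}^\infty\frac{dt}{(t+\mu_i)^{1+a}\big(\ln(t+\mu_i)\big)^{1+b_i}},
\]
splitting $(t+\mu_i)^{1+a}\ge(k_0-1+\mu_i)^a(t+\mu_i)$ on the tail and using $\int_{k_0-1}^\infty\frac{dt}{(t+\mu_i)(\ln(t+\mu_i))^{1+b_i}}=\frac1{b_i(\ln(k_0-1+\mu_i))^{b_i}}$; together with \eqref{rem:conv:k0:m} this gives $\sum_{k\ge k_0}\mathcal N_k^{-1}\le\phi/\mathsf D(x^*)$, which is condition \eqref{ee23}. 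Proposition~\ref{prop:lp:bound}(i) with $p=2$ and Remark~\ref{rem:prop:lp:bound} then bound $\sup_k\esp[\|x^k-x^*\|^2]$ by $\mathsf J(x^*,k_0,\phi)$, and the argument of Theorem~\ref{thm:convergence:rate} yields, for each $\epsilon>0$, an index $K=K_\epsilon$ with $\esp[r_\alpha(x^K)^2]\le\epsilon\le\mathsf Q_\infty(x^*,k_0,\phi)/K$. Finiteness of $\mathsf a_0^\infty,\mathsf b_0^\infty$ follows from Assumption~\ref{extra.cte.samples}, and the same tail estimate (with $\mathcal N_k^{-2}\le m\sum_iN_{k,i}^{-2}$ by Cauchy--Schwarz for $\mathsf b_0^\infty$) bounds them by explicit quantities in the problem data, giving \eqref{rem:conv:rate:m} with $\mathsf{\widehat Q}(x^*):=\mathsf Q_\infty(x^*,k_0(x^*),\phi)$ unfolded as in the remark following the proposition (cf. Remarks~\ref{rem:constants:rate:non:uniform} and~\ref{rem:constants:rate:uniform}); Theorem~\ref{extra.cte.convergence} holds since Assumptions~\ref{existence}--\ref{extra.cte.control} are all in force.

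\emph{Decentralized sampling: oracle complexity and the main obstacle.} From $N_{k,i}\le\theta_i(k+\mu_i)^{1+a}(\ln(k+\mu_i))^{1+b_i}+1$, monotonicity in $k$ of the leading term, $K\le\mathsf{\widehat Q}(x^*)/\epsilon$ and $\mu_i\le\epsilon^{-1}$, I would get
\[
\sum_{k=1}^K\sum_{i=1}^m 2N_{k,i}\ \lesssim\ \mathsf{\widehat Q}(x^*)\big(\mathsf{\widehat Q}(x^*)+1\big)^{1+a}\,\epsilon^{-2-a}\sum_{i=1}^m\theta_i\Big(\ln\big((\mathsf{\widehat Q}(x^*)+1)\epsilon^{-1}\big)\Big)^{1+b_i}.
\]
The hard part is to control $\sum_{i=1}^m\theta_i(\ln L)^{1+b_i}$ with $L:=(\mathsf{\widehat Q}(x^*)+1)\epsilon^{-1}$: I would bound $\theta_i\le\theta_{\max}$ and use \eqref{rem:exponents}, rewritten as $b_i-b_1\le\ln\mathsf S-2\ln(i+1)$, to obtain $(\ln L)^{b_i}\le(\ln L)^{b_1}\,\mathsf S^{\ln\ln L}\,(i+1)^{-2\ln\ln L}$; for $L$ in the admissible range this is summable over $i$ with total at most $\mathsf S(\ln L)^{b_1}\sum_{i\ge1}(i+1)^{-2}\le\mathsf S(\ln L)^{b_1}$, whence $\sum_i\theta_i(\ln L)^{1+b_i}\le\max\{\theta_{\max},1\}\,\mathsf S\,(\ln L)^{1+b_1}$. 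Substituting back gives the stated complexity with $\mathsf{\widehat P}(x^*):=\mathsf{\widehat Q}(x^*)+1$ and $\mathsf{\widehat I}(x^*)$ collecting the remaining $\mathsf{\widehat Q}(x^*)$-dependent constants. I expect this last step --- the bookkeeping of the double sum over iterations $k$ and agents $i$, so that summing over $i\in[m]$ does not inflate the polylogarithmic-in-$K$ overhead beyond the single exponent $b_1$ (which is exactly what the coordinated decreasing-exponent condition \eqref{rem:exponents} buys) and so that the $\mathsf S$-factor and all constants match the remark --- to be the principal obstacle; the per-agent estimates are otherwise direct analogues of those in Proposition~\ref{pp1}.
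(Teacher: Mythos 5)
Your proposal is correct and follows essentially the same route as the paper: the centralized case is reduced to Proposition \ref{pp1}, the choice of $k_0$ is verified by the same tail-integral estimate, the rate comes from the machinery of Theorem \ref{thm:convergence:rate}, and the oracle complexity is obtained by collapsing $\sum_{i=1}^m (\ln L)^{b_i}$ to $\mathsf{S}(\ln L)^{b_1}$ via condition \eqref{rem:exponents} exactly as in the paper's inequality \eqref{pppp} (your bound $(\ln L)^{b_i}\le(\ln L)^{b_1}\mathsf{S}^{\ln\ln L}(i+1)^{-2\ln\ln L}$ reduces to the paper's $h^{b_i}\le\mathsf{S}h^{b_1}(i+1)^{-2}$ under the same implicit restriction $\ln\ln L\ge 1$ that the paper imposes). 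The only cosmetic slip is the remark that centralized sampling gives "exactly the constants of the scalar case": Lemma \ref{lema:indsums} gives $\mathsf{A}=2$ for $m>1$ versus $\mathsf{A}=1$ for $m=1$, but since the proof of Proposition \ref{pp1} only uses $\mathsf{A}\le 2$ this is immaterial.
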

\begin{proof}
In the sequel we will  use the following estimate. For any $k\in\mathbb{N}_0$, $a>0$, $0<b<1$, $\mu>1$, 
\begin{equation}
\int_k^\infty\frac{dt}{(t+\mu)^{1+a}(\ln(t+\mu))^{1+b}}\le\max\left\{\frac{1}{a(k+\mu)^a},
\frac{1}{(k+\mu)^ab[\ln(k+\mu)]^b}\right\}.\label{rem:estimate:m}
\end{equation}

For $\phi\in(0,\frac{\sqrt{5}-1}{2})$ we want $k_0:=k_0(x^*)$ to satisfy \eqref{ee23} of Theorem \ref{thm:convergence:rate}. 
Since $\mathcal{N}_k$ is the harmonic average of $\{N_{k,i}\}_{i=1}^m$ and 
$N_{k,i}\ge\theta_{\min}(k+\mu_{\min})^{1+a}[\ln (k+\mu_{\min})]^{1+b_{\min}}$ for all $i\in[m]$, we get from \eqref{rem:estimate:m}:
$$
\sum_{k\ge k_0}\frac{1}{\mathcal{N}_{k}}\le 
\theta_{\min}^{-1}\sum_{k\ge k_0}\frac{1}{(k+\mu_{\min})^{1+a}[\ln (k+\mu_{\min})]^{1+b_{\min}}}
$$
\begin{equation}\label{cccc}	
\le\frac{\theta_{\min}^{-1}}{(k_0-1+\mu_{\min})^ab_{\min}[\ln(k_0-1+\mu_{\min})^{b_{\min}}]}\le
\frac{\theta_{\min}^{-1}}{(k_0-1+\mu_{\min})^ab_{\min}},
\end{equation}		
if $k_0\ge e-\mu_{\min}+1$. From \eqref{cccc} and \eqref{ee23}, it is enough to choose $k_0$ as the minimum natural number 
greater than $e-\mu_{\min}+1$ such that the RHS of \eqref{cccc} is less than $\phi/\mathsf{D}(x^*)$. Using 
the definition of $\mathsf{D}(x^*)$ in \eqref{def:D}, it is enough to choose $k_0$ as in \eqref{rem:conv:k0:m}.

Next we estimate the value of $\mathsf{Q}_\infty(x^*,k_0,\phi)$ as defined in \eqref{def:Qx}. Recall $\frac{1}{\mathcal{N}_k}=\sum_{i=1}^m\frac{1}{N_{k,i}}$ and set $\lambda:=2c\hat\alpha^2C_2^2$ with $c$ as defined in Remark \ref{estimate.C}. Definitions \eqref{def:D} and \eqref{def:ak:bk} imply
\begin{eqnarray}	
\mathsf{D}(x^*)\mathsf{a}^\infty_0+
\mathsf{D}(x^*)^2\mathsf{b}^\infty_0&\le &
\sigma(x^*)^2\sum_{k\ge0}\sum_{i=1}^m\frac{\lambda\theta_i^{-1}}{(k+\mu_i)^{1+a}(\ln (k+\mu_i))^{1+b_i}}+\nonumber\\
&&\sigma(x^*)^4\sum_{k\ge0}\left[\sum_{i=1}^m\frac{\lambda\theta_i^{-1}}{(k+\mu_i)^{1+a}(\ln (k+\mu_i))^{1+b_i}}\right]^2\label{rem:bound:E:m:eq1}.
\end{eqnarray}
The first summation in \eqref{rem:bound:E:m:eq1} is bounded by
\begin{equation}\label{rem:bound:E:m:eq2}
\sum_{i=1}^m\sum_{k\ge0}\frac{\lambda\theta_i^{-1}}{(k+\mu_i)^{1+a}}\le\sum_{i=1}^m\int_{-1}^\infty\frac{\lambda\theta_i^{-1}dt}{(t+\mu_i)^{1+a}}\le
\sum_{i=1}^m\frac{\lambda}{\theta_i a(\mu_i-1)^a}=:\mathcal{A}_m.
\end{equation}
Using estimate \eqref{rem:estimate:m}, the second summation in \eqref{rem:bound:E:m:eq1} is bounded by
\begin{eqnarray}
&&\sum_{i=1}^m\sum_{j=1}^m\sum_{k\ge0}\frac{\lambda^2\theta_i^{-1}\theta_j^{-1}}{(k+\mu_{\min})^{2+2a}[\ln(k+\mu_{\min})]^{2+b_i+b_j}}\nonumber\\
&&\le\frac{1}{\vartheta}\left\{\sum_{i=1}^m\frac{\lambda}{\theta_i[\ln(\mu_{\min}-1)]^{b_i}}\right\}^2=:\mathcal{B}_m,\label{rem:bound:E:m:eq3}
\end{eqnarray}
where $\vartheta:=(1+2b_{\min})(\mu_{\min}-1)^{1+2a}\ln(\mu_{\min}-1)$. From Theorem \ref{thm:convergence:rate}, \eqref{rem:bound:E:m:eq1}-\eqref{rem:bound:E:m:eq3} and  definition of $\mathsf{Q}_\infty(x^*,k_0,\phi)$ as specified in \eqref{def:ak:bk}-\eqref{def:Qx}, we prove \eqref{rem:conv:rate:m}, noting that $\mathsf{\widehat Q}(x^*)$ is specified as in Remark \ref{rem:constants:rate:network}.

We now prove the bound on the oracle complexity. Let $\mathsf{\mathsf{Q}}_\infty(x^*):=\mathsf{\mathsf{Q}}_\infty(x^*,k_0,\phi)$.
Using the facts that $K\le \mathsf{Q}_\infty(x^*)/\epsilon$, $\mu_i\leq\epsilon^{-1}$ and the definition of $N_{k,i}$, we have
\begin{eqnarray}	
\sum_{k=1}^{K}\sum_{i=1}^m 2N_{k,i}&\le &\sum_{k=1}^{K}\sum_{i=1}^m 2\left[\theta_i(k+\mu_i)^{1+a}(\ln(k+\mu_i))^{1+b_i}+1\right]\nonumber\\
&\le &2\max\{\theta_{\max},1\}K\sum_{i=1}^m	
\left[(K+\mu_i)^{1+a}\left(\ln \left(K+\mu_i\right)\right)^{1+b_i}+1\right]\nonumber\\
&\le &4\max\{\theta_{\max},1\}K\sum_{i=1}^m	
\left[(K+\mu_i)^{1+a}\left(\ln \left(K+\mu_i\right)\right)^{1+b_i}\right]\nonumber\\
&\le &4\Phi\frac{\left(\mathsf{Q}_\infty(x^*)+1\right)^{2+a}}{\epsilon^{2+a}}
\sum_{i=1}^m\left(\ln\left(\mathsf{Q}_\infty(x^*)\epsilon^{-1}+\epsilon^{-1}\right)\right)^{1+b_i},\label{rem:complex:m:aux}
\end{eqnarray}
using the fact that $1\le (K+\mu_i)^{1+a}\left(\ln \left(K+\mu_i\right)\right)^{1+b_i}$ for $i\in[m]$ 
in the  third inequality, and defining  
$\Phi:=\max\{\theta_{\max},1\}$ 
in the last inequality.

Set $h:=\ln\left(\mathsf{Q}_\infty(x^*)\epsilon^{-1}+\epsilon^{-1}\right)$ with $h\ge e$ for sufficiently small 
$\epsilon>0$. By definition of $\{b_i\}_{i=1}^m$ we have, for $i\in[m]$,
\begin{equation}\label{pppp}
b_1\ge b_i+2\ln(i+1)-\ln\mathsf{S}\ge b_i+\frac{2\ln(i+1)-\ln\mathsf{S}}{\ln h}\Rightarrow h^{b_i}\le\frac{\mathsf{S} h^{b_1}}{(i+1)^2}.
\end{equation} 
From \eqref{pppp}  we obtain that 
\begin{equation}\label{rem:bound:E:m:eq4}
\sum_{i=1}^mh^{b_i}\le \mathsf{S} h^{b_1}\sum_{i=1}^{m}\frac{1}{(i+1)^2}\le \mathsf{S} h^{b_1}.
\end{equation}
From the bounds \eqref{rem:bound:E:m:eq1}-\eqref{rem:bound:E:m:eq4},  the definitions of $h$ and $\mathsf{Q}_\infty(x^*,k_0,\phi)$, 
as specified in \eqref{def:ak:bk}-\eqref{def:Qx} and the fact that $(x+y+z)^{2+a}\le3^{1+a}(x^{2+a}+y^{2+a}+z^{2+a})$, 
we obtain the required bound on $\sum_{k=1}^K\sum_{i=1}^m2N_{k,i}$, noting that $\mathsf{\widehat I}(x^*)$ 
and $\mathsf{\widehat{P}}(x^*)$ are specified as in Remark \ref{rem:constants:rate:network}.
\end{proof}
\begin{rem}[Constants]\label{rem:constants:rate:network}
\emph{
Define:
\begin{eqnarray}\label{def:I:hat}
\mathsf{\widehat{I}}(d,A,J,\nu):=4\cdot3^{\nu-1}\left\{(2\rho^{-1})^{\nu}d^{2\nu}+(2\rho^{-1})^{\nu}A^\nu\left[1+J\right]^\nu+1\right\},
\end{eqnarray}
using \eqref{def:ak:bk}. In view of \eqref{def:D}, \eqref{def:ak:bk}-\eqref{def:Qx}, 
\eqref{def:Q:overline}, \eqref{def:I:hat} and \eqref{rem:bound:E:m:eq2}-\eqref{rem:bound:E:m:eq3}, 
the constants in the statement of Proposition \ref{pp2} are given by 
\begin{eqnarray*}
\mathsf{\widehat Q}(x^*)&:=& \mathsf{\overline Q}(\Vert x^0-x^*\Vert,\sigma(x^*)^2\mathcal{A}_m+\sigma(x^*)^4\mathcal{B}_m,\mathsf{J}(x^*,k_0(x^*),\phi)),\\
\mathsf{\widehat P}(x^*)&:=&\mathsf{Q}_\infty(x^*,k_0(x^*),\phi)+1,\\
\mathsf{\widehat I}(x^*)&:=&\mathsf{\widehat I}(\Vert x^0-x^*\Vert,\sigma(x^*)^2\mathcal{A}_{m}+\sigma(x^*)^4\mathcal{B}_{m},\mathsf{J}(x^*,k_0(x^*),\phi),2+a).
\end{eqnarray*}
}
\end{rem}
\begin{rem}[Oracle complexity of $\mathcal{O}(m)$]
\emph{
For the choice of parameters \eqref{rem:conv:sample:rate:m}-\eqref{rem:exponents}, if we have $\theta_{i}\sim\theta m$ 
for some $\theta>0$ 
then $\mathcal{A}_m\lesssim\frac{\theta^{-1}}{a(\mu_{\min}-1)^a}$ and $\mathcal{B}_m\lesssim\frac{\theta^{-2}}{(\mu_{\min}-1)^{1+2a}}$, 
where $\mathcal{A}_m$ and $\mathcal{B}_m$ are defined in \eqref{rem:bound:E:m:eq2}-\eqref{rem:bound:E:m:eq3}. Also, 
$b_{\min}\le b_1+\ln\mathsf{S}-2\ln(m+1)$ so that it is enough to choose $b_1>2\ln(m+1)-\ln\mathsf{S}$, which is reasonably 
small in terms of $m$. Finally, the bound on the oracle complexity in Proposition \ref{pp2} is of order 
$\max\{1,\theta^{-2(2+a)}\}\theta_{\max}\lesssim\max\{\theta,\theta^{-(3+2a)}\}m$. Moreover, the sampling is robust in the sense that the 
convergence rate is proportional to $\max\{1,\theta^{-2}\}$ and the oracle complexity is proportional to $\max\{\theta,\theta^{-(3+2a)}\}$. We remark that improvements can be achieved if a coordination $\mu_{\min}\sim\epsilon^{-1}$ is possible (given a prescribed tolerance 
$\epsilon>0$). For simplicity we do not present the analogous results of Proposition \ref{pp3} for the case $m\gg1$ under 
Assumption \ref{extra.cte.control}(iii). In that case, the estimates depend on $\dist(x^0,X^*)$, the rate is proportional 
to $\max\{1,\theta^{-1}\}$ and the oracle complexity is proportional to $\max\{\theta,\theta^{-(2+a)}\}m$.
}
\end{rem}
%

\subsubsection{Comparison of complexity estimates}\label{sss3.4.1}
Next, we briefly compare our complexity results in terms of the quadratic natural residual, given in  this section, with 
related results presented in previous work in terms of other merit functions for the stochastic variational inequality. 
As commented in Subsection \ref{ss1.1}, the quadratic natural residual and the D-gap function are equivalent merit functions. 
An immediate result is that the previous complexity analysis,  given in Theorems \ref{thm:convergence:rate}-\ref{thm:convergence:rate:unif} 
and Propositions \ref{pp1}-\ref{pp2} in terms of the quadratic natural residual, are also valid in terms of the D-gap function. 
In this sense, our rate of convergence of $\mathcal{O}(1/K)$ in terms of the D-gap function improves the rate $\mathcal{O}(1/\sqrt{K})$ in terms of 
the dual gap-functions analyzed in \cite{nem, lan, uday2, uday5}.

By Proposition \ref{pp1} and Remark \ref{rem:constants:rate:non:uniform}, if Assumption \ref{extra.cte.control}(ii) holds, then the algorithm performance, in terms of the convergence rate and oracle complexity, depends on some $x^*\in X^*$ such that
$
\sigma(x^*)^4\cdot\max_{0\le k\le k_0(x^*)}\esp[\Vert x^k-x^*\Vert^2]
$	
is minimal, that is to say, we have a trade-off between variance of the oracle error and distance to initial iterates. 
We also remark that the sampling rate $\mathcal{N}_k$ possesses a \emph{robust property}: a scaling in the sampling rate by  
a factor $\theta$, keeps the algorithm running with a proportional scaling of $\max\{1,\theta^{-2}\}$ in the rate 
and $\max\{\theta,\theta^{-3}\}$ in the oracle complexity (see \cite{shapiro2} for discussion on robust algorithms). 
By Proposition \ref{pp3}, when the variance is bounded by $\sigma^2$ over $X^*$, the estimates depend on 
$
\sigma^4\max_{0\le k\le k_0(\sigma)}\esp[\dist(x^k,X^*)^2]
$ and $k_0(\sigma)$ is independent of any $x^*\in X^*$. When the variance is uniform over $X$, the estimates depend 
only on $\dist(x^0,X^*)$ and a scaling factor $\theta$ in the sampling rate implies a factor of $\max\{1,\theta^{-1}\}$ 
in the rate and of $\max\{\theta,\theta^{-1}\}$ in the oracle complexity. In the estimates of Propositions \ref{pp1}-\ref{pp3}, 
we may obtain optimal constants in terms of $L$, the variance and distance to initial iterates by optimizing over $(\hat\alpha,\theta)$. 
Interestingly, in the case of a compact feasible set, the estimates do not depend on $\diam(X)$, as in \cite{nem,lan}, but rather on the 
distance of the initial iterates to $X^*$, which is a sharper result. In the case of networks the same conclusions hold, 
except that the dependence in the dimension is higher if a decentralized sampling is used. From Proposition \ref{pp2}, 
if a distributed sampling is used and a coordination of a rapid decreasing sequence of positive numbers is offered (in any order), 
then the oracle complexity depends linearly on the size of the network (up to a scaling factor in the sampling rate).

We briefly compare our convergence rate and complexity bounds presented in Propositions \ref{pp1} and \ref{pp3} with those 
in \cite{lan} (Corollaries 3.2 and 3.4). In \cite{lan}, for a compact $X$ with uniform variance over $X$, 
the convergence rate obtained in terms of the dual gap function is of order
$
L\diam(X)^2K^{-1}+\sigma\diam(X)K^{-1/2},
$
and the oracle complexity is of order
$
L\diam(X)^2\epsilon^{-1}+\sigma^2\diam(X)^2\epsilon^{-2}.
$ 
For an unbounded $X$ with uniform variance over $X$, the convergence rate in terms of the relaxed dual-gap 
function $\tilde G(x,v)$ described in Subsection \ref{ss1.1} is of order 
$
L\Vert x^0-x^*\Vert^2K^{-1}+\sigma\Vert x^0-x^*\Vert^2K^{-1/2},
$
while the oracle complexity is of order
$
L\Vert x^0-x^*\Vert^2\epsilon^{-1}+\sigma^2\Vert x^0-x^*\Vert^4\epsilon^{-2}.
$
Note that the optimal constants in terms of $L$ and $\sigma$ in these bounds require tuning the stepsize to $L$ and $\sigma$. In the estimates given in Propositions \ref{pp1}-\ref{pp3}, the ``coercivity'' modulus $\rho^{-1}$ introduced 
by the extragradient step behaves qualitatively as $L$. 
We improve on the rate of convergence to $\mathcal{O}(1/K)$ with respect to the stochastic term 
$\mathcal{O}(\sigma/\sqrt{K})$ by reducing iteratively the variance.
Differently from \cite{lan}, our analysis is the same for a compact or unbounded $X$, in the sense that the same merit function 
is used. For the case of a compact $X$, our bounds depend on $\dist(x^0,X^*)$ rather $\diam(X)$ as in \cite{lan}, 
which is a sharper result. When the variance is uniform over an unbounded $X$, our bounds depend on $\dist(x^0,X^*)$ 
instead of $\Vert x^0-x^*\Vert$ for a given $x^*\in X^*$ as in \cite{lan}, which is also a sharper bound. 
We analyze the new case of non-uniform variance, which has a similar performance, except that the estimates depend on a point $x^*\in X^*$ with a minimum trade-off between variance and distances to a few initial iterates. 
Moreover, we include asymptotic convergence, which it is not reported in \cite{lan}.
	
\section{Concluding remarks}\label{s4}

In this work we propose an extragradient method for pseudo-monotone stochastic variational inequalities that 
combines the stochastic approximation methodology alongside an iterative variance reduction procedure. 
We obtain asymptotic convergence, non-asymptotic convergence rates and oracle complexity estimates and prove 
that the generated sequence is uniformly bounded in $L^p$. In order to achieve these properties, we require the operator to 
be just pseudo-monotone and Lipschitz-continuous. Our results give an accelerate rate with optimal oracle complexity, 
coping with unbounded feasible sets and an oracle with non-uniform variance. The method admits a robust sampling rate. 
We also include the analysis for the distributed solution of Cartesian SVIs.
	
A potential direction of future research is to obtain sharp complexity estimates for \emph{exponential convergence} 
of method \eqref{algorithm.extra.cte1}-\eqref{algorithm.extra.cte2}. In previous works \cite{nem,lan, friedlander}, exponential convergence 
is proved, assuming an \emph{uniform} tail bound for the oracle error, that is, that there exists $\sigma>0$ 
such that $\esp\left[\exp\left\{\frac{\Vert F(\xi,x)-T(x)\Vert^2}{\sigma^2}\right\}\right]\le\exp\{1\}$, for all $x\in X$. 
This assumption is not satisfied in general for unbounded feasible sets and, even for compact ones, $\sigma^2$ may be a conservative upper bound of the oracle variance at points of the trajectory of the method. 
Moreover, based on Section \ref{sss3.4.1}, in the case of a compact feasible set or uniform tail bound, we wish to study 
sharp complexity estimates with respect to the distance of the initial iterate to the solution set. We intend to make  
this analysis in a second paper assuming a non uniform tail bound in the spirit of Assumption \ref{extra.cte.control}(i)-(ii). 
Motivated by this work, another interesting line of research we intend to pursue is to verify if (extra)gradient methods 
with \emph{robust stepsizes} can achieve accelerated convergence rates with respect to the stochastic error.

Finally, we discuss error bounds on the solution set. It is well known that important classes of variational inequalities 
admit the natural residual as an error bound for the solution set, i.e., for all $\alpha>0,$ there exists $\delta>0$ such that 
$\dist(x,X^*)\lesssim r_\alpha(x)$ 
for all $x\in\re^n$ with $r_\alpha(x)\le\delta$.
This property holds, for example, for (i) semi-stable VIs, (ii) composite strongly monotone VIs such that $X$ is a polyhedron, (iii) VIs such that $T$ is linear and $X$ is a cone (see \cite{facchinei}). 
Item (ii) includes affine VIs and strongly monotone VIs. Item (iii) includes linear homogeneous complementarity 
problems and linear system of equations. When such property holds, the results of 
Theorems \ref{thm:convergence:rate}-\ref{thm:convergence:rate:unif} and Propositions \ref{pp1}-\ref{pp2} 
provide other classes of SVI's for which convergence of $\mathcal{O}(1/K)$ holds in terms of the mean-squared distance to the solution set. 
In the previous literature, such property was shown only for strongly pseudo-monotone or weak-sharp SVIs on a compact set. 
In a upcoming paper,
we intend to refine the complexity analysis for the case in which this error bound on the solution set is valid. 

\section*{Appendix. Proof of lemmas}
\quad

Proof of Lemma \ref{extra.cte.korpelevich}:
\begin{proof}
Let $x^*\in X^*$. In order to simplify the notation,
in the sequel we call $\widehat{F}(\epsilon^k_2,z^k):=T(z^k)+\epsilon^k_2$ and 
$y^k:=x^k-\alpha_k\widehat{F}(\epsilon^k_2,z^k)$, so that, $x^{k+1}=\Pi(y^k)$. For every $x\in X$, we have
\begin{eqnarray}
\Vert x^{k+1}-x\Vert^2 & = & \Vert \Pi(y^k)-x\Vert^2\nonumber\\
&\le & \Vert y^k-x\Vert^2-\Vert y^k-\Pi(y^k)\Vert^2\nonumber\\
& = & \Vert (x^k-x)-\alpha_k \widehat{F}(\epsilon^k_2,z^k)\Vert^2-
\Vert (x^k-x^{k+1})-\alpha_k \widehat{F}(\epsilon^k_2,z^k)\Vert^2\nonumber\\
& = & \Vert x^k-x\Vert^2-\Vert x^k-x^{k+1}\Vert^2+2\langle x-x^{k+1},\alpha_k\widehat{F}(\epsilon^k_2,z^k)\rangle\nonumber\\
& = & \Vert x^k-x\Vert^2-\Vert x^k-x^{k+1}\Vert^2+2\langle x-z^{k},\alpha_k\widehat{F}(\epsilon^k_2,z^k)\rangle+\nonumber\\
&& 2\langle z^k-x^{k+1},\alpha_k\widehat{F}(\epsilon^k_2,z^k)\rangle 
\label{extra.cte.eq3}\\
& = & \Vert x^k-x\Vert^2-\Vert (x^k-z^k)+(z^k-x^{k+1})\Vert^2+\nonumber\\
&& 2\langle z^k-x^{k+1},\alpha_k\widehat{F}(\epsilon^k_2,z^k)\rangle+
2\langle x-z^{k},\alpha_k\widehat{F}(\epsilon^k_2,z^k)\rangle\nonumber\\
& = & \Vert x^k-x\Vert^2-\Vert x^k-z^k\Vert^2-\Vert z^k-x^{k+1}\Vert^2\nonumber\\
&& -2\langle x^k-z^k,z^k-x^{k+1}\rangle+2\langle z^k-x^{k+1},\alpha_k\widehat{F}(\epsilon^k_2,z^k)\rangle+\nonumber\\
&& 2\langle x-z^{k},\alpha_k\widehat{F}(\epsilon^k_2,z^k)\rangle\nonumber\\
& = & \Vert x^k-x\Vert^2-\Vert x^k-z^k\Vert^2-\Vert z^k-x^{k+1}\Vert^2+\nonumber\\
&& 2\langle x^{k+1}-z^k,x^k-\alpha_k\widehat{F}(\epsilon^k_2,z^k)-z^k\rangle+
2\langle x-z^{k},\alpha_k\widehat{F}(\epsilon^k_2,z^k)\rangle,\nonumber
\end{eqnarray}	
using Lemma \ref{proj}(ii) in the inequality and simple algebra in the equalities. 

Looking at the fourth term $\mathsf{I}:=2\langle x^{k+1}-z^k,x^k-\alpha_k\widehat{F}(\epsilon^k_2,z^k)-z^k\rangle$ 
in the RHS of the last equality of
\eqref{extra.cte.eq3},
we take into account \eqref{algorithm.extra.cte1.F} and the fact that $\widehat{F}(\epsilon^k_2,z^k)=T(z^k)+\epsilon^k_2$, and apply
Lemma \ref{proj}(i) with $C=X$, $x=x^k-\alpha_k(T(x^k)+\epsilon^k_1)$ and $y=x^{k+1}\in X$, obtaining:
\begin{eqnarray}
\mathsf{I} &=& 2\langle x^{k+1}-z^k,x^k-\alpha_k (T(x^k)+\epsilon^k_1)-z^k\rangle+\nonumber\\
&& 2\langle x^{k+1}-z^k,\alpha_k\left[(T(x^k)+\epsilon^k_1)-(T(z^k)+\epsilon^k_2)\right]\rangle
\label{extra.cte.eq4}\\
&\le & 2\alpha_k\Vert x^{k+1}-z^k\Vert\Vert (T(z^k)+\epsilon^k_2)-(T(x^k)+\epsilon^k_1)\Vert,\nonumber
\end{eqnarray}
using the Cauchy-Schwartz inequality. Next we apply Lemma \ref{proj}(iii) 
to \eqref{algorithm.extra.cte1.F}-\eqref{algorithm.extra.cte2.F}, obtaining
\begin{eqnarray}\label{extra.cte.eq5}
\Vert x^{k+1}-z^k\Vert &=& \Vert \Pi[x^k-\alpha_k (T(z^k)+\epsilon^k_2)]-\Pi[x^k-\alpha_k(T(x^k)+\epsilon^k_1)]\Vert\nonumber\\
&\le & \alpha_k\Vert (T(z^k)+\epsilon^k_2)-(T(x^k)+\epsilon^k_1)\Vert.
\end{eqnarray}

Combining \eqref{extra.cte.eq4} and \eqref{extra.cte.eq5} we get
\begin{eqnarray}
\mathsf{I}&\le & 2\alpha_k^2\Vert (T(z^k)+\epsilon^k_2)-(T(x^k)+\epsilon^k_1)\Vert^2\nonumber\\
&\le &2\alpha_k^2\left(\Vert T(z^k)-T(x^k)\Vert+\Vert\epsilon^k_1\Vert+\Vert\epsilon^k_2\Vert\right)^2\nonumber\\
&\le &2\alpha_k^2\left(L\Vert z^k-x^k\Vert+\Vert\epsilon^k_1\Vert+\Vert\epsilon^k_2\Vert\right)^2
\label{extra.cte.eq6}\\
&\le & 6L^2\alpha_k^2\Vert z^k-x^k\Vert^2+6\alpha_k^2\Vert\epsilon^k_1\Vert^2+
6\alpha_k^2\Vert\epsilon^k_2\Vert^2,\nonumber
\end{eqnarray}
using the triangle inequality in the second inequality, Lipschitz continuity of $T$ in the third inequality and the fact 
that $(a+b+c)^2\le3a^2+3b^2+3c^2$ in the last inequality. We set $x:=x^*$ in  \eqref{extra.cte.eq3}. 
Looking now at the last term in the last equality in \eqref{extra.cte.eq3}, we get
\begin{eqnarray}
2\langle x^*-z^{k},\alpha_k\widehat{F}(\epsilon^k_2,z^k)\rangle
&=& 2\langle x^*-z^{k},\alpha_k(T(z^k)+\epsilon^k_2)\rangle\nonumber\\
&=& 2\langle x^*-z^{k},\alpha_k T(z^k)\rangle+
2\langle x^*-z^{k},\alpha_k\epsilon^k_2\rangle
\label{extra.cte.eq7}\\
&\le & 2\langle x^*-z^{k},\alpha_k\epsilon^k_2\rangle=:\mathsf{J}_k,\nonumber
\end{eqnarray}
using, in the last inequality, the fact that $\langle x^*-z^{k},\alpha_kT(z^k)\rangle\le0$, 
which follows from Assumption \ref{extra.cte.monotonicity}, the fact that $\alpha_k>0$, $x^*\in X^*$ and $z^k\in X$. Combining \eqref{extra.cte.eq3}, \eqref{extra.cte.eq6} and \eqref{extra.cte.eq7}, we get
\begin{eqnarray}	
\Vert x^{k+1}-x^*\Vert^2 & \le & \Vert x^k-x^*\Vert^2-\Vert z^k-x^k\Vert^2-\Vert z^k-x^{k+1}\Vert^2+\nonumber\\
&& 6L^2\alpha_k^2\Vert z^k-x^k\Vert^2+
6\alpha_k^2\left(\Vert\epsilon^k_1\Vert^2+\Vert\epsilon^k_2\Vert^2\right)+\mathsf{J}_k\nonumber\\
&\le & \Vert x^k-x^*\Vert^2-\rho_k\Vert z^k-x^k\Vert^2+6\alpha_k^2(\Vert\epsilon^k_2\Vert^2
+\Vert\epsilon^k_1\Vert^2)+\mathsf{J}_k,\label{extra.cte.eq8}
\end{eqnarray}
using the fact that $\rho_k=1-6L^2\alpha_{k}^2$.

Recalling that $z^k=\Pi[x^k-\alpha_k(T(x^k)+\epsilon^k_1)]$, we note that 
\begin{eqnarray}	
r_{\alpha_k}(x^k)^2&=&\Vert x^k-\Pi[x^k-\alpha_k T(x^k)]\Vert^2\nonumber\\
&\le &2\Vert x^k-z^k\Vert^2+2\Vert\Pi[x^k-\alpha_k(T(x^k)+\epsilon^k_1)]-\Pi[x^k-\alpha_k T(x^k)]\Vert^2\nonumber\\
&\le &2\Vert x^k-z^k\Vert^2+2\alpha_k^2\Vert\epsilon^k_1\Vert^2\label{extra.cte.eq9},
\end{eqnarray}
using Lemma \ref{proj}(iii) in the second inequality. From \eqref{extra.cte.eq8}, \eqref{extra.cte.eq9} and the definitions \eqref{def:A}-\eqref{def:M} 
and $\mathsf{J}_k=M_{k+1}(x^*)-M_k(x^*)$, we get the claimed relation. 
\end{proof}

We now give the proof of Lemma \ref{lema:indsums}:
\begin{proof}
We first prove the result under Assumption \ref{extra.cte.control}(i)-(ii).  Consider  item (1). 
Assume first that $m>1$ and take $i\in[m]$. For $1\leq t\leq N_i$, define $U_i^t\in\re^{n_i}$ by
$$
U_i^t:=\sum_{j=1}^t\frac{F_i(\xi_{j,i},x)-T_i(x)}{N_i}.
$$
Defining $U^0_i=0$, $\mathcal{G}_0:=\sigma(U^0_i)$ and the natural 
filtration $\mathcal{G}_t:=\sigma(\xi_{1,i},\ldots,\xi_{t,i})$ for $1\le t\le N_i$, 
$\{U^t_{i},\mathcal{G}_t\}_{t=0}^{N_i}$ defines a 
vector-valued martingale (since it is a sum of $N_i$ independent mean-zero vector random variables) whose increments satisfy
\begin{equation*}
\Lpnorm{\Vert U^t_{i}-U^{t-1}_{i}\Vert} = \Lpnorm{\frac{\Vert F_i(\xi,x)-T_i(x)\Vert}{N_i}}\leq
\frac{\Lpnorm{\|F(\xi,x)-T(x)\|}}{N_i} \leq  \frac{\sigma(x^*)\,(1+\|x-x^*\|)}{N_i},
\end{equation*}
by Assumption \ref{extra.cte.control}, using the same notation $\Vert\cdot\Vert$ for the Euclidean norm in $\re^{n_i}$ and in $\re^{n}$. Hence,
\begin{equation}\label{ee5}
\Lpnorm{\Vert U^{N_i}_{i}\Vert}\leq\frac{C_p\,\sigma(x^*)\,(1+\|x-x^*\|)}{\sqrt{N_i}},
\end{equation}
which follows from the BDG inequality \eqref{BDG}. 
For each $i\in[m]$, $\epsilon_i(x)=U^{N_i}_{i}$. Hence, since $q\ge1$, from Minkowski's inequality and \eqref{ee5}  we get:
\begin{equation}\label{ee6}
\Lpnorm{\Vert \epsilon(x)\Vert}^2=\Lqnorm{\Vert \epsilon(x)\Vert^2}\leq\sum_{i=1}^m\Lqnorm{\Vert U_{i}^{N_i}\Vert^2}
\le C_p^2\left(\sum_{i=1}^m\frac{2}{N_i}\right)\sigma(x^*)^2\,(1+\|x-x^*\|^2),
\end{equation}
using $(a+b)^2\le2a^2+2b^2$. The first claim follows from \eqref{ee6} with $\mathsf{A}=2$. If $m=1$, 
the same proof line holds with $\mathsf{A}=1$, since relation $(a+b)^2\le2a^2+2b^2$ is not required.

We now prove item (2). Suppose that $m>1$ and that $\{\xi_{j,i}:1\le i\le m,1\le j\le N_i\}$ is i.i.d.. We have
\begin{equation}\label{ee7}
\Lpnorm{\langle v,\epsilon(x)\rangle} \le\Vert v\Vert\Lpnorm{\Vert\epsilon(x)\Vert},
\end{equation}
by the Cauchy-Schwarz inequality. The claim follows from \eqref{ee6} and \eqref{ee7} with $\mathsf{B}=2$. 

Finally, we prove item (3). Suppose that $m=1$, or $m>1$ with $N_i\equiv N$, $\xi_{j,i}\equiv\xi_j$  
for all $i\in[m]$. Define $U^0:=0$ and $U^t:=(U^t_1,\ldots,U^t_m)$ for $t\ge1$ and
$
W_t:=\langle v,\cdot U^t\rangle.
$ 
Observe that $\{(W_t,\mathcal{G}_t)\}_{k=0}^N$ defines a real valued martingale with the filtration given by
$\mathcal{G}_0:=\sigma(U^0)$ and $\mathcal{G}_t:=\sigma(\xi_1,\ldots,\xi_t)$ for $t\ge1$, since it is a sum of $N$ i.i.d. random variables. 
Its increments $\Lpnorm{W_t-W_{t-1}}$ are equal to
\begin{eqnarray}
\Lpnorm{\left\langle v,\frac{F(\xi_t,x)-T(x)}{N}\right\rangle}
&\leq &\frac{\Vert v\Vert\,\Lpnorm{\|F(\xi,x)-T(x)\|}}{N}\nonumber\\
&\leq &\frac{\Vert v\Vert\sigma(x^*)(1+\|x-x^*\|)}{N},\label{ee8}
\end{eqnarray}
using the Cauchy-Schwarz inequality in the first inequality and Assumption \ref{extra.cte.control} in the last one. 
Hence, from \eqref{ee8} and the BDG-inequality \eqref{BDG}, we get 
the claim with $\mathsf{B}=1$ (in this case $\mathcal{N}=N$).

The proof of the bounds under the stronger Assumption \ref{extra.cte.control}(iii) is essentially 
the same with sharper bounds on the increments, and so we omit it.
\end{proof}

\section*{Acknowledgment} The authors are grateful for the referees' constructive comments.

\end{document}